\newcommand{\0}{{\mathbf{0}}}
\newcommand{\1}{{\mathbf{1}}}
\newcommand{\cG}{\mathcal{G}}
\newcommand{\cC}{\mathcal{C}}
\newcommand{\cH}{\mathcal{H}}
\newcommand{\cM}{\mathcal{M}}
\newcommand{\cD}{\mathcal{D}}
\newcommand{\cF}{\mathcal{F}}
\newcommand{\hW}{\widehat{W}}
\newcommand{\tA}{\widetilde{A}}
\newcommand{\hM}{\widehat{M}}
\newcommand{\hZ}{\widehat{Z}}
\newcommand{\tM}{\widetilde{M}}
\newcommand{\tC}{\widetilde{C}}
\newcommand{\barZ}{\widebar{Z}}
\newcommand{\bbN}{\mathbb{N}}
\newcommand{\bbP}{\mathbb{P}}
\newcommand{\bbE}{\mathbb{E}}
\newcommand{\bbR}{\mathbb{R}}
\newcommand{\bbO}{\mathbb{O}}
\newcommand{\balpha}{\boldsymbol{\alpha}}
\newcommand{\bbeta}{\boldsymbol{\beta}}
\newcommand{\rmvec}{{\rm{vec}}}
\renewcommand{\exp}{{\rm{exp}}}
\newcommand{\Bern}{{\rm{Bern}}}
\newcommand{\argmin}{\mathop{\rm arg\min}}
\newcommand{\rank}{{\rm rank}}
\newcommand{\F}{{\rm F}}
\newcommand{\Corr}{{\textrm Corr}}
\newcommand{\indi}{{\mathds{1}}}
\newcommand{\BC}{{\textnormal{BC}}}
\newcommand{\SBM}{{\textnormal{SBM}}}
\newcommand{\mean}{{\rm mean}}
\newcommand{\USVT}{{\rm USVT}}
\newtheorem{Definition}{Definition}
\newtheorem{Theorem}{Theorem}
\newtheorem{Lemma}{Lemma}
\newtheorem{Remark}{Remark}
\newtheorem{Corollary}{Corollary}
\newtheorem{Proposition}{Proposition}
\DeclareMathAlphabet\mathbfcal{OMS}{cmsy}{b}{n}
\newcommand*{\rom}[1]{\expandafter\@slowromancap\romannumeral #1@}
\begin{document}
	\title{Computational Lower Bounds for Graphon Estimation via Low-degree Polynomials}
	
	\author{Yuetian Luo$^1$ ~ and ~ Chao Gao$^2$ }
	\affil{University of Chicago}
	\date{}
	\maketitle

	\footnotetext[1]{Email: \texttt{yuetian@uchicago.edu}.}
	\footnotetext[2]{Email: \texttt{chaogao@uchicago.edu}. The research of CG is supported in part by NSF
Grants ECCS-2216912 and DMS-2310769, NSF Career Award DMS-1847590, and an Alfred Sloan fellowship. }

\begin{abstract}
Graphon estimation has been one of the most fundamental problems in network analysis and has received considerable attention in the past decade. From the statistical perspective, the minimax error rate of graphon estimation has been established by Gao et al (2015) for both stochastic block model (SBM) and nonparametric graphon estimation. The statistical optimal estimators are based on constrained least squares and have computational complexity exponential in the dimension. From the computational perspective, the best-known polynomial-time estimator is based on universal singular value thresholding (USVT), but it can only achieve a much slower estimation error rate than the minimax one. It is natural to wonder if such a gap is essential. The computational optimality of the USVT or the existence of a computational barrier in graphon estimation has been a long-standing open problem. In this work, we take the first step towards it and provide rigorous evidence for the computational barrier in graphon estimation via low-degree polynomials. Specifically, in SBM graphon estimation, we show that for low-degree polynomial estimators, their estimation error rates cannot be significantly better than that of the USVT under a wide range of parameter regimes and in nonparametric graphon estimation, we show low-degree polynomial estimators achieve estimation error rates strictly slower than the minimax rate. Our results are proved based on the recent development of low-degree polynomials by Schramm and Wein (2022), while we overcome a few key challenges in applying it to the general graphon estimation problem. By leveraging our main results, we also provide a computational lower bound on the clustering error for community detection in SBM with a growing number of communities and this yields a new piece of evidence for the conjectured Kesten-Stigum threshold for efficient community recovery. Finally, we extend our computational lower bounds to sparse graphon estimation and biclustering with additive Gaussian noise, and provide discussion on the optimality of our results.    
\end{abstract}

{\bf Keywords}: Graphon estimation, Computational lower bound; Low-degree polynomials, Community detection; Kesten-Stigum threshold; Statistical-computational trade-offs

\begin{sloppypar}
\section{Introduction} \label{sec: introduction}
Network analysis has gained considerable research interest in the last couple of decades \citep{goldenberg2010survey,bickel2009nonparametric,girvan2002community,wasserman1994social}. A key task in network analysis is to estimate the underlying network generating process. It is useful for many important applications such as studying network evolution \citep{pensky2019dynamic}, predicting missing links \citep{miller2009nonparametric,airoldi2013stochastic,gao2015rate}, learning user preferences in recommender systems \citep{li2019nearest} and correcting errors in crowd-sourcing systems \citep{shah2018reducing}. In this paper, we are interested in the question: when could the underlying network generating process be estimated in a computationally efficient way? 

A general representation for {the generating process of} unlabelled exchangeable networks was first introduced by \cite{aldous1981representations,hoover1979relations} and was further developed and named {\it graphon} in  \cite{lovasz2006limits,diaconis2008graph,borgs2008convergent}. Specifically, in the graphon model, we observe an undirected graph of $n$ nodes and the associated adjacency matrix $A \in \{0,1\}^{n \times n}$. The value of $A_{ij}$ stands for the presence or the absence of an edge between the $i$th and the $j$th nodes. The sampling process of $A$ is determined as follows: conditioning on $(\xi_1, \ldots, \xi_n)$,
\begin{equation} \label{eq: graphon-model}
	 \text{ for all } 1\leq i < j \leq n, \quad  A_{ij} = A_{ji}\sim \Bern(M_{ij}), \quad \text{ where } \,  M_{ij} = f(\xi_i, \xi_j).
\end{equation} Here the sequence $\{\xi_i\}$ are i.i.d. random variables sampled from an unknown distribution $\bbP_\xi$ supported on $[0,1]$. A common choice for $\bbP_\xi$ is the uniform distribution on $[0,1]$. In this paper, we allow $\bbP_\xi$ to be arbitrary so that the model \eqref{eq: graphon-model} can be studied to its full generality. Conditioning on $(\xi_1, \ldots, \xi_n)$, $A_{ij}$'s are mutually independent across all $1\leq i < j \leq n$, and we adopt the convention that $A_{ii} = M_{ii} = 0$ for all $i \in [n]$. The function $f: [0,1] \times [0,1] \mapsto [0,1]$, which is assumed to be symmetric, is called graphon. In this work, we focus on this general graphon model and consider the problem of estimating $f$ given $A$. 

The concept of graphon plays a significant role in network analysis. It was originally developed as a limit of a sequence of graphs with growing sizes \citep{diaconis2008graph,lovasz2006limits,lovasz2012large}, and has been applied to various network analysis problems ranging from testing graph properties to characterizing distances between two graphs \citep{borgs2008convergent,borgs2012convergent,lovasz2012large}. The general graphon model in \eqref{eq: graphon-model} captures many special models of interest. For example, when $f$ is a constant function, it gives rise to the Erd\H{o}s-R{\'e}nyi random graph; when $f$ is a blockwise constant function or $\bbP_\xi$ has a discrete support, it specializes to the stochastic block model (SBM) \citep{holland1983stochastic}.    

One challenge in graphon estimation is the non-identifiability of $f$ due to the fact that the latent random variables $\{\xi_i\}$ are unobservable. To overcome this, we follow the prior work \cite{gao2015rate} and consider estimating $f$ under the empirical loss: 
\begin{equation} \label{eq: empirical-loss}
	\ell(\hM,  M_f) := \frac{1}{ {n \choose 2} } \sum_{1 \leq i < j \leq n} ( \hM_{ij} - (M_f)_{ij})^2,
\end{equation} where $\hM\in \bbR^{n \times n}$ and $(M_f)_{ij} := f(\xi_i,\xi_j)$.

There has been great interest in graphon estimation in the last decade \citep{wolfe2013nonparametric,airoldi2013stochastic,chan2014consistent,gao2015rate,klopp2017oracle} and we refer readers to Section \ref{sec: literature review} for detailed discussion. From the statistical perspective, \cite{gao2015rate} provided the first characterization for the minimax error rate in graphon estimation. In particular, for the SBM with $k$ blocks, the minimax estimation error rate is
\begin{equation} \label{eq: uncostrained-alg-rate}
	\text{SBM class}: \quad \inf_{\hM} \sup_{M \in \cM_k} \bbE \left( \ell( \hM,M) \right) \asymp \frac{k^2}{n^2}+\frac{\log k}{n},
\end{equation} where $\cM_k$ denotes the set of connectivity probability matrices in SBM with $k$ communities and its exact definition is given Section \ref{sec: computational-limit-graphon-estimation}. The minimax upper bound is achieved by a constrained least-squares estimator which needs to search over all possible graphon matrices in $\cM_k$ and is computationally inefficient, i.e., with runtime exponential in $n$.

When $f$ belongs to a H\"older space with smoothness index $\gamma$, the minimax estimation error rate is shown to be \citep{gao2015rate}
\begin{equation}\label{eq: uncostrained-alg-rate-holder}
	\begin{split}
		\text{H\"older class}: \quad \inf_{  \hM } \sup_{f \in \cH_\gamma(L) } \sup_{\bbP_\xi } \bbE \left(  \ell(\hM, M_f) \right) \asymp \left\{ \begin{array}{c c}
			n^{ - \frac{2\gamma}{\gamma+1} } &  0 < \gamma < 1,\\
			\frac{\log n}{n} &  \gamma \geq 1,
		\end{array} \right.
	\end{split}
\end{equation} where $\cH_\gamma(L)$ denotes the H\"older class to be introduced in Section \ref{sec: nonparametric-graphon-est}. Again, computing the minimax optimal estimator is expensive as it is based on first approximating a $\gamma$-smooth graphon with a blockwise constant matrix and then applying the constrained least-squares estimator.

From the computational perspective, the problem appears to be far less well-understood. The best polynomial-time estimator so far for graphon estimation is the universal singular value thresholding (USVT) \citep{chatterjee2015matrix}, and its sharp error bound was obtained by \cite{klopp2019optimal,xu2018rates},
\begin{equation} \label{eq: poly-time-alg-rate}
\begin{split}
	\text{SBM class}: & \quad 	\sup_{M \in \cM_k} \bbE \left( \ell(\hM_{\USVT},M)  \right) \leq C\frac{k}{n},\\
 \text{H\"older class}: &\quad \sup_{f \in \cH_\gamma(L)} \sup_{\bbP_\xi} \bbE \left( \ell(\hM_{\USVT}, M_f ) \right) \leq C n^{ - \frac{2\gamma}{2\gamma +1} },
\end{split}
\end{equation} for some constant $C > 0$ independent of $n$ and $k$.

Comparing \eqref{eq: uncostrained-alg-rate} and \eqref{eq: uncostrained-alg-rate-holder} with \eqref{eq: poly-time-alg-rate}, we see that there is a big gap between the estimation error rate achieved by the USVT and the minimax rate. It has been conjectured in \cite{xu2018rates} that the error rates in \eqref{eq: poly-time-alg-rate} are optimal within the class of polynomial-time algorithms, but no rigorous evidence is provided there. The fundamental computational limits for graphon estimation have been a long-standing open problem in the community \citep{xu2018rates,gao2021minimax,wu2021statistical}. In particular, in a recent survey about the statistical and computational limits for statistical problems with planted structures, \cite{wu2021statistical} explicitly highlight ``computational hardness of graphon estimation'' in their Section 5 as one of the six prominent open problems in the field. 

The gap on the performance of polynomial-time algorithms and unconstrained-time algorithms is quite common in high-dimensional statistical problems. There has been a flurry of progress in the statistics and theoretical computer science communities towards understanding the general ``statistical-computational trade-offs'' phenomenon. This topic focuses on the gap between signal-to-noise ratio (SNR) requirements under which the problem is information-theoretically solvable v.s. polynomial-time solvable. As the SNR increases, such problems often exhibit three phases of interest: (1) statistically unsolvable; (2) statistically solvable but computationally expensive, e.g., with runtime exponential in the input dimension; (3) easily solvable in polynomial-time. Many frameworks such as average-case reduction, statistical query (SQ), sum-of-squares (SoS) hierarchy, optimization landscape, and low-degree polynomials, have been proposed to study this phenomenon, and we refer readers to Section \ref{sec: literature review} for a thorough discussion. Based on these frameworks, rigorous evidence for the computational barrier has been provided for a wide class of statistical problems, such as planted clique, sparse PCA, submatrix detection, tensor PCA, robust mean estimation, and many others \citep{barak2019nearly,berthet2013complexity,ma2015computational,zhang2018tensor,brennan2018reducibility,diakonikolas2017statistical}. 

Despite all these successes, the graphon estimation problem is a rare example where to our best knowledge essentially no progress has been made under any framework. We think there are two major challenges in establishing the computational lower bound for graphon estimation: (1) in this problem, we want to establish a computational lower bound for {\it estimation} error rate, while most existing frameworks are mainly designed for hypothesis testing. Two natural hypothesis testing problems associated with graphon estimation do not have computational barriers, as we will discuss in Appendix \ref{sec: testing-problem-no-hardness}; (2) in contrast to the classical problems, such as planted clique or sparse PCA, there is no such a canonical SNR quantity in graphon estimation, though it is often critical to understand this quantity in order to apply existing frameworks.

 In this work, we overcome the above challenges and provide the first rigorous piece of evidence for the computational barrier in graphon estimation. The contributions of the paper are summarized below.

\subsection{Our Contributions} \label{sec: contributions}

The main result of the paper is given by the following theorem.
\begin{Theorem} \label{th: intro-theorem} Suppose $2 \leq k \leq \sqrt{n}$. For any $D \geq 1$, there exists a universal constant $c > 0$ such that
	\begin{equation} \label{ineq: intro-theorem-bound}
		\inf_{\hM \in \bbR[A]^{n \times n}_{\leq D} } \sup_{M \in \cM_k} \bbE( \ell(\hM,M) ) \geq \frac{ck}{nD^4}. 
	\end{equation} Here the notation $\hM \in \bbR[A]^{n \times n}_{\leq D}$ means that for all $(i,j) \in [n] \times [n]$, $\hM_{ij}$ is a polynomial of $A$ with degree no more than $D$.	
\end{Theorem}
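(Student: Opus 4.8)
The plan is to lower bound the minimax risk by a Bayes risk under a well-chosen prior $\pi$ on $\cM_k$, decouple the empirical loss into per-entry problems, and control a single scalar low-degree minimum mean squared error via the framework of Schramm and Wein. Since the loss averages over entries and each coordinate $\hM_{ij}$ can be chosen separately,
\[
\inf_{\hM\in\bbR[A]^{n\times n}_{\le D}}\;\sup_{M\in\cM_k}\bbE\bigl(\ell(\hM,M)\bigr)\;\ge\;\inf_{\hM\in\bbR[A]^{n\times n}_{\le D}}\bbE_{M\sim\pi}\bbE\bigl(\ell(\hM,M)\bigr)\;=\;\frac{1}{\binom n2}\sum_{i<j}\inf_{g}\,\bbE\bigl(g(A)-M_{ij}\bigr)^2,
\]
the last infimum being over polynomials $g$ in $A$ of degree $\le D$ (the expectation is over the joint law of $(M,A)$ under $\pi$). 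If $\pi$ is exchangeable in the $n$ nodes, every summand equals $\mathrm{MMSE}_{\le D}(M_{12}\mid A):=\inf_g\bbE(g(A)-M_{12})^2$, so it suffices to produce one such $\pi$ with $\mathrm{MMSE}_{\le D}(M_{12}\mid A)\gtrsim k/(nD^4)$.

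\noindent\textbf{The prior.} I would take a balanced SBM with a weakly informative connectivity matrix: i.i.d.\ labels $z_1,\dots,z_n\sim\mathrm{Unif}[k]$; i.i.d.\ Rademacher variables $\eta_{ab}$ for $a\le b$; and $M_{ij}=Q_{z_iz_j}$ with $Q_{ab}=\tfrac12+\rho\,\eta_{ab}$. Then $M\in\cM_k$ always, the prior is node-exchangeable, and $M_{12}-\tfrac12=\rho\,\eta_{z_1z_2}$ has variance exactly $\rho^2$. The parameter $\rho$ is tuned so that $\rho\le\tfrac12$ and, crucially, so that the effective signal-to-noise ratio of order $\rho\sqrt{n/k}$ is \emph{small}: taking $\rho^2\asymp k/(nD^4)$ makes this ratio $\asymp 1/D^2$ and $\mathrm{Var}(M_{12})\asymp k/(nD^4)$. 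The heuristic is that, conditionally on $z$, the entry $M_{12}$ could be estimated to squared error $\asymp (k/n)^2$ from the $\asymp (n/k)^2$ Bernoulli entries of its block, but only after identifying the communities; since $\rho\sqrt{n/k}$ is below the detection threshold, no estimator---and in particular no low-degree one---can recover that structure, so $M_{12}$ is essentially as hard to estimate as indicated by its prior variance $\rho^2$.

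\noindent\textbf{Reduction to a low-degree correlation, and the main obstacle.} By the orthogonal-projection identity underlying the Schramm--Wein low-degree MMSE, $\mathrm{MMSE}_{\le D}(M_{12}\mid A)=\mathrm{Var}(M_{12})\bigl(1-\overline{\mathrm{Corr}}^2_{\le D}\bigr)$, where $\overline{\mathrm{Corr}}^2_{\le D}\in[0,1]$ is the squared $L^2$-norm of the projection of $M_{12}-\bbE M_{12}$ onto mean-zero polynomials of $A$ of degree $\le D$, divided by $\mathrm{Var}(M_{12})$; so it suffices to prove $\overline{\mathrm{Corr}}^2_{\le D}\le\tfrac12$. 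This is the step I expect to be hardest, and it is where the announced ``key challenges'' in adapting Schramm--Wein lie: $A$ is \emph{not} a product measure (edges at a common vertex are correlated through $z$), so there is no off-the-shelf Fourier basis and no canonical scalar SNR, and the two hypothesis-testing problems naturally attached to this model are easy, so one must argue directly on the estimation side. The plan is to write $A=\bbE[A\mid z]+(A-\bbE[A\mid z])$, the second term being, conditionally on $z$, a product of independent bounded mean-zero variables; expand an arbitrary degree-$\le D$ polynomial of $A$ in these noise variables; and observe that in the inner product with $M_{12}-\bbE M_{12}$ (a function of $(z,\eta)$ only) every term carrying a nontrivial noise factor vanishes after conditioning on $(z,\eta)$. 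What remains is the correlation of $M_{12}-\bbE M_{12}$ with a degree-$\le D$ polynomial in the block values $\{Q_{z_iz_j}\}$---i.e.\ in the i.i.d.\ Rademacher variables $\{\eta_{ab}\}$, but with the pattern of repetitions dictated by the \emph{hidden} labels $z$. Expanding this polynomial over edge multisets on $[n]$ and using that $M_{12}$ depends on $z$ only through $\{z_1,z_2\}$, the contributions that do not vanish or cancel come from self-avoiding paths of length $\ell\le D$ joining vertices $1$ and $2$; bounding the number of such paths ($\asymp n^{\ell-1}$ at length $\ell$) against the per-edge signal $\rho$ and the label-coincidence probabilities gives $\overline{\mathrm{Corr}}^2_{\le D}\lesssim \mathrm{poly}(D)\cdot(\rho\sqrt{n/k})^{2}$. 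With $\rho^2\asymp k/(nD^4)$ chosen above to absorb the polynomial-in-$D$ slack, this yields $\overline{\mathrm{Corr}}^2_{\le D}\le\tfrac12$, hence $\mathrm{MMSE}_{\le D}(M_{12}\mid A)\ge\tfrac12\mathrm{Var}(M_{12})\asymp k/(nD^4)$, as claimed. The exponent $4$ reflects the crudeness of this path count; a sharper estimate (a geometric sum $\sum_{\ell\le D}(\rho\sqrt{n/k})^{2\ell}$) would only require $\rho\sqrt{n/k}$ to be a small absolute constant and would improve the $D$-dependence, but that refinement is not needed for the stated bound.
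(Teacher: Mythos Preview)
Your high-level strategy --- lower bound the minimax risk by a Bayes risk under a node-exchangeable prior, reduce by symmetry to the single-entry low-degree MMSE for $M_{12}$, and bound the degree-$D$ correlation --- is exactly what the paper does. The differences are in the choice of prior and, crucially, in how the correlation is controlled, and your sketch has a genuine gap at that step.

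The paper's prior is \emph{simpler} than yours: it takes $Q$ \emph{deterministic} with only two values, $Q_{aa}=p$ and $Q_{ab}=q$ for $a\ne b$ (the community-detection submodel $\cM_{k,p,q}$), so that $M_{ij}-q=(p-q)\indi(z_i=z_j)$ depends on $z$ alone. The Schramm--Wein binary-model bound then applies verbatim, $\mathrm{Corr}_{\le D}^2\le\sum_{\balpha}\kappa_{\balpha}^2/(q(1-p))^{|\balpha|}$; the paper shows $\kappa_{\balpha}=0$ unless $\balpha$ is a connected multigraph through both vertices $1$ and $2$, bounds the survivors inductively, and sums over \emph{all} such $\balpha$ (not just self-avoiding paths). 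Choosing $(p,q)$ to saturate the resulting SNR constraint $(p-q)^2/(q(1-p))\asymp k^2/(nD^4)$ makes the lower bound $(p-q)^2/k\asymp k/(nD^4)$. A side benefit is that the same computation yields a clustering lower bound at the Kesten--Stigum threshold.

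Your noise-elimination argument is where things break. If the split you intend is $A=M+(A-M)$, with $A-M$ conditionally independent mean-zero given $(z,\eta)$, then you are right that the numerator satisfies $\bbE[f(A)(M_{12}-\tfrac12)]=\bbE[g(M)(M_{12}-\tfrac12)]$ with $g(M):=\bbE[f(A)\mid M]$, and that $\bbE[f^2]\ge\bbE[g^2]$. But this only bounds $\mathrm{Corr}_{\le D}$ by the correlation of $M_{12}-\tfrac12$ with degree-$\le D$ polynomials \emph{of $M$}; taking $g(M)=M_{12}-\tfrac12$ already achieves $\overline{\mathrm{Corr}}^2=1$, so the bound is vacuous. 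If instead you literally meant $A=\bbE[A\mid z]+(A-\bbE[A\mid z])$, note that under your prior $\bbE[A\mid z]\equiv\tfrac12$ and the residuals $A_{ij}-\tfrac12$ are \emph{not} conditionally independent given $z$ (edges in the same block share the same $\eta_{z_iz_j}$), so the decomposition does not have the properties you claim. The Schramm--Wein cumulant bound avoids this collapse precisely because it retains the factor $(\tau_0(1-\tau_1))^{-|\balpha|}$ tracking the Bernoulli-noise cost of each observed edge; your reduction discards exactly this information, and without it there is no leverage to prove $\overline{\mathrm{Corr}}^2_{\le D}\le\tfrac12$. The path-counting you sketch afterwards is essentially the right combinatorics, but it has to be run on the cumulant sum (as the paper does), not on the naively reduced problem.
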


It has been widely conjectured in the literature that for a broad class of high-dimensional problems, degree-$D$ polynomials are as powerful as the class of $n^{D}$ (up to $\log n$ factors in the exponent) runtime algorithms \citep{hopkins2018statistical}. Therefore, by setting $D = \log^{1 + \epsilon } n$ for any $\epsilon > 0$, Theorem \ref{th: intro-theorem} provides firm evidence that the best estimation error achieved by polynomial-time algorithms for graphon estimation under the SBM class cannot be faster than $\tilde{\Theta}(k/n)$. Up to logarithmic factors, this matches the upper bound achieved by USVT in \eqref{eq: poly-time-alg-rate}. 

We also establish a low-degree polynomial lower bound for graphon estimation under the H\"older class by approximating a smooth graphon via an SBM. See Theorem \ref{th: computational-lower-bound-nonparametric-graphon} in Section \ref{sec: nonparametric-graphon-est}. Again, the statistical error rate in \eqref{eq: uncostrained-alg-rate-holder} is strictly faster than the one achieved by low-degree polynomial algorithms. Combining the two results, we make a step in resolving the open problem regarding the computational lower bounds for graphon estimation raised by \cite{xu2018rates,gao2021minimax,wu2021statistical}.

\subsection{From Community Detection to Graphon Estimation}

Theorem \ref{th: intro-theorem} is proved by leveraging the recent advancement of low-degree polynomials developed by \cite{schramm2020computational}. Compared with previous work \citep{hopkins2017bayesian,hopkins2018statistical} on hypothesis testing, the low-degree polynomial lower bound in \cite{schramm2020computational} is directly established for \textit{estimation} problems under some prior distribution, and is thus particularly suitable for graphon estimation. Sharp computational lower bounds have been derived for several important examples in \cite{schramm2020computational} including the planted submatrix problem and the planted dense subgraph problem. However, unlike the examples in \cite{schramm2020computational}, the graphon estimation problem does not have a natural prior distribution and SNR, and therefore it is unclear how the general theorem of \cite{schramm2020computational} can be applied to such a setting.

To address this challenge, we consider another problem in network analysis called community detection. The goal of community detection is to recover the clustering structure of a network. For this purpose, a canonical model is the $k$-class SBM with within-class and between-class homogeneous connectivity probabilities, i.e., for two nodes from the same community, the connectivity probability is set to be $p$ and for two nodes from different communities, the connectivity probability is set to be $q$ \citep{mossel2015reconstruction,abbe2015exact}. Unlike the general SBM that has $\frac{k(k-1)}{2}$ model parameters, the SBM used for community detection has only $2$ parameters ($p$ and $q$) and can be viewed as a subset. For this subset, not only all the joint cumulants required by the theorem of \cite{schramm2020computational} can be computed, but we also have a nature SNR that quantifies the statistical-computational gap.

By applying \cite{schramm2020computational}, we show that a non-trivial clustering error cannot be achieved by low-degree polynomial algorithms below the generalized Kesten-Stigum threshold \citep{kesten1966additional,decelle2011asymptotic,chen2016statistical}. This result is of independent interest, and complements the recent progress by \cite{hopkins2017bayesian,bandeira2021spectral,banks2021local,brennan2020reducibility} on the computational limits of community detection. More importantly, the low-degree polynomial lower bound for community detection immediately implies the desired rate (\ref{ineq: intro-theorem-bound}) for graphon estimation by carefully choosing a least-favorable pair of $p$ and $q$.

This connection between graphon estimation and community detection from the perspective of computational limit is quite surprising. Without any computational constraint, the statistical limits of the two problems are derived from very different arguments in the literature. While the minimax rate of graphon estimation is polynomial \citep{gao2015rate,klopp2017oracle,gao2016optimal}, the minimax rate of community detection is exponential \citep{zhang2016minimax,fei2020achieving}, and one cannot be derived from the other. In contrast, we show that the low-degree polynomial lower bounds for the two problems can be established through the same argument. Detailed discussion on the connection between the two problems will be given in Section \ref{sec: computational-limit-graphon-estimation} and Section \ref{sec: community-detection}.

\subsection{Related Prior Work} \label{sec: literature review}
{\it Graphon estimation} has received considerable attention in the past decade \citep{wolfe2013nonparametric,yang2014nonparametric,airoldi2013stochastic,olhede2014network,chan2014consistent,borgs2015private,chatterjee2015matrix,gao2015rate,klopp2017oracle,gao2016optimal,zhang2017estimating,klopp2019optimal}. The minimax error rates for a variety of graphon estimation problems, including (sparse) SBM graphon estimation, nonparametric graphon estimation, graphon estimation with missing entries have been established in \cite{gao2015rate,klopp2017oracle,gao2016optimal,klopp2019optimal}. A number of efficient estimators for graphon estimation have been proposed \citep{airoldi2013stochastic,chatterjee2015matrix,chan2014consistent,zhang2017estimating,li2019nearest,gaucher2021optimality}. In the SBM setting, \cite{gaucher2021optimality} showed that a tractable estimator based on variational inference can achieve the minimax rate under appropriate assumptions on the connectivity probability matrix and the clustering labels. Without these additional assumptions, the best polynomial-time estimators for SBM/nonparametric graphon estimation are provided and analyzed in \cite{chatterjee2015matrix,klopp2019optimal,xu2018rates}, but they are far from optimal. Recently, graphon estimation in a bipartite graph, private graphon estimation, and stochastic block smooth graphon model have also been considered in  \cite{choi2017co,donier2023graphon,borgs2015private,sischka2022stochastic}.

\subsubsection{Statistical-computational Trade-offs} There has been a long line of work on studying the statistical-computational trade-offs in high-dimensional statistical problems. One powerful approach to establish the computational lower bounds is based on the average-case reduction \citep{berthet2013complexity, gao2017sparse,wang2016statistical,ma2015computational, cai2017computational,hajek2015computational,brennan2018reducibility,brennan2020reducibility,luo2020tensor,pananjady2022isotonic}, and it requires a distribution over instances in a conjecturally hard problem to be mapped precisely to the target distribution. Once the reduction is done, all hardness results from the conjectured hard problem can be automatically inherited to the target problem. On the other hand, the conclusions rely on conjectures that have not been proved yet. For this reason, many recent literature aims to show computational hardness results under some restricted models of computation, such as sum-of-squares \citep{ma2015sum,hopkins2017power,barak2019nearly}, statistical query (SQ) \citep{feldman2017statistical,diakonikolas2017statistical,diakonikolas2019efficient,feldman2018complexity}, class of circuit \citep{rossman2008constant}, convex relaxation \citep{chandrasekaran2013computational}, local algorithms \citep{gamarnik2014limits},  low-degree polynomials \citep{hopkins2017bayesian,kunisky2019notes} and others.

\subsubsection{Why the Low-degree Polynomial Framework} Among various ways to establish computational lower bounds, the low-degree polynomial framework is both clean and general. It has already been applied to many important high-dimensional problems and always leads to the same computational limits as conjectured in the literature. Compared with the low-degree polynomial method, the statistical query (SQ) framework is typically applied when the observed data consists of i.i.d. samples, but it is not clear how to cast graphon estimation into this form. The sum-of-squares (SoS) lower bounds provide strong evidence for the average-case hardness, but it is important to note that SoS lower bounds show hardness of certification problems. It does not necessarily imply hardness of estimation/recovery \citep{bandeira2020computational,banks2021local}. Average-case reduction is often applied to hypothesis testing problems \citep{berthet2013complexity,brennan2018reducibility}. To show the hardness of estimation from hypothesis testing, one often needs to further perform an extra reduction from estimation to testing. However, as we will see in Appendix \ref{sec: testing-problem-no-hardness}, two natural hypothesis testing problems associated with graphon estimation do not have a statistical-computational gap. 

\subsubsection{More Literature on Low-degree Polynomials} The idea of using low-degree polynomials to predict the statistical-computational gaps was recently developed in a line of work on studying the SoS hierarchy \citep{hopkins2017bayesian,hopkins2018statistical,barak2019nearly}. Many state-of-art algorithms such as spectral methods and approximate messaging messaging (AMP) \citep{donoho2009message} can be represented as low-degree polynomials \citep{kunisky2019notes,gamarnik2020low,montanari2022equivalence} and the ``low'' here typically means logarithmic in the dimension. In comparison to SoS computational lower bounds, the low-degree polynomial method is simpler to establish and appears to always yield the same results for natural average-case hardness problems. The majority of the existing low-degree polynomial hardness results are established for hypothesis testing problems based on the notion of {\it low-degree likelihood ratio}. Examples include unsupervised problems such as planted clique detection \citep{hopkins2018statistical, barak2019nearly}, community detection in SBM \citep{hopkins2017bayesian, hopkins2018statistical,jin2022phase}, spiked tensor model \citep{hopkins2017power, hopkins2018statistical, kunisky2019notes}, spiked Wishart model \citep{bandeira2020computational}, sparse PCA \citep{ding2019subexponential}, spiked Wigner model \citep{kunisky2019notes}, clustering in Gaussian mixture models \citep{loffler2020computationally,lyu2022optimal}, planted vector recovery \citep{mao2021optimal}, independent component analysis \citep{auddy2023large} as well as supervised learning problems such as tensor regression \citep{luo2022tensor}, and mixed sparse linear regression \citep{arpino2023statistical}. Very recently, the low-degree polynomial method has also been extended to establish computational hardness in statistical estimation/recovery problems \citep{schramm2020computational,koehler2022reconstruction,wein2023average,mao2023detection} and random optimization \citep{gamarnik2020low,wein2022optimal,bresler2022algorithmic}. It is gradually believed that the low-degree polynomial method is able to capture the essence of what makes sum-of-squares algorithms, and more generally, polynomial-time algorithms succeed or fail \citep{hopkins2018statistical, kunisky2019notes}. However, there are a couple of important examples where the low-degree polynomials can not predict the right computational threshold, such as the random 3-XOR-SAT problem \citep{kunisky2019notes}. In those settings, low-degree polynomials can be outperformed by some "brittle" algebraic methods with almost no noise tolerance, we refer readers to \cite{holmgren2021counterexamples,zadik2022lattice,diakonikolas2022non} for more discussions. Finally, it is worth mentioning that although we focus on the low-degree polynomial framework, it has been demonstrated that this framework is closely related to many other frameworks, such as SoS, SQ, free-energy landscape, and approximate message passing, from various perspectives \citep{hopkins2017power,barak2019nearly,brennan2021statistical,bandeira2022franz,montanari2022equivalence}.

\subsection{Organization of the Paper} \label{sec: organization}
After the introduction of notation and preliminaries of low-degree polynomials in Section \ref{sec:notations}, we present our main results on the low-degree polynomial lower bounds for graphon estimation in SBM and nonparametric graphon estimation in Section \ref{sec: computational-limit-graphon-estimation} and Section \ref{sec: nonparametric-graphon-est}, respectively. The low-degree polynomial lower bound for community detection in SBM is given in Section \ref{sec: community-detection}. Extensions of the main results to sparse graphon estimation and biclustering are given in Section \ref{sec: extension}. The proofs of the main results are presented in Section \ref{sec: proof-of-main-result} and the rest of the proofs are deferred to appendices.

\section{Notation and Preliminaries}\label{sec:notations}

Define $\bbN = \{0,1,2, \ldots \}$ and $[N] = \{1,\ldots,N\}$ for an integer $N$. For $\balpha \in \bbN^N$, define $|\balpha| = \sum_{i=1}^N \balpha_i$, $\balpha! = \prod_{i=1}^N \balpha_i!$, and for $X \in \bbR^N$, define $X^{\balpha} = \prod_{i=1}^N X_i^{\balpha_i}$. Given $\balpha,\bbeta \in \bbN^N$, we use $\balpha \geq \bbeta$ to mean $\balpha_i \geq \bbeta_i$ for all $i$. The operations $\balpha + \bbeta$ and $\balpha - \bbeta$ are performed entrywise. The notation $\bbeta \lneq \balpha $ means $\bbeta \leq \balpha$ and $\bbeta \neq \balpha$ (but not necessarily $\bbeta_i < \balpha_i$ for every $i$). Furthermore, for $\balpha \geq \bbeta$, we define ${\balpha \choose \bbeta} = \prod_{i=1}^N {\balpha_i \choose \bbeta_i}$. Sometimes, given $n \geq 1$ and $N = n(n-1)/2$, we will view $\balpha \in \bbN^N$ as a multigraph (without self-loops) on vertex set $[n]$, i.e., for each $i < j$, we let $\balpha_{ij}$ represent the number of edges between vertices $i$ and $j$. In this case, $V(\balpha) \subseteq [n]$ denotes the set of vertices spanned by the edges of $\balpha$. For any vector $v$, define its $\ell_2$ norm as $\|v\|_2 = \left(\sum_i |v_i|^2\right)^{1/2}$. For any matrix $D \in \mathbb{R}^{p_1\times p_2}$, the matrix Frobenius and spectral norms are defined as $ \|D\|_\F = \left(\sum_{i,j} D_{ij}^2\right)^{1/2}$ and $\|D\| = \max_{u\in \mathbb{R}^{p_2}}\|D u\|_2/\|u\|_2$, respectively. The notation $I_r$ represents the $r$-by-$r$ identity matrix and $\1_n$ is an all $1$ vector in $\bbR^n$. For any two sequences of numbers, say $\{a_n\}$ and $\{b_n\}$, denote $a_n \asymp b_n$ or $a_n = \Theta (b_n)$ if there exists uniform constants $c, C>0$ such that $ca_n \leq b_n\leq Ca_n$ for all $n$; $a_n \lesssim b_n$ means that $a_n \leq C b_n$ holds for some constant $C > 0$ independent of $n$ and $a_n = \tilde{\Theta}(b_n)$ if $a_n/ b_n$ and $b_n/a_n$ are both bounded by poly$\log(n)$, i.e., $a_n$ and $b_n$ are on the same order up to poly$\log(n)$ factors. Finally, throughout the paper, let $c,c',c'', C$ be some constants independent of $n$ and $k$, whose actual values may vary from line to line.

\subsection{Computational Lower Bounds for Estimation via Low-degree Polynomials} \label{sec: low-degree-preliminary}

Consider the {\it general binary observation model} and suppose the signal $X \in [\tau_0,\tau_1]^N$ with $0 \leq \tau_0 < \tau_1 \leq 1$ is drawn from an arbitrary but known prior. We observe $Y \in \{0,1 \}^N$ where $\bbE[Y_i | X_i] = X_i$ and $\{Y_i \}_{i=1}^N$ are conditionally independent given $X$. Let $\bbR[Y]_{\leq D}$ denote the space of polynomials $g: \bbR^N \to \bbR$ of degree at most $D$ of $Y$. Suppose the goal is to estimate a scalar quantity $x \in \bbR$, which is a function of $X$, then we have the following estimation lower bound for low-degree polynomial estimators.
\begin{Proposition}[\cite{schramm2020computational}] \label{prop: schramm-wein-binary}
	In the general binary model described above, denote $\bbP$ as the joint distribution of $x$ and $Y$. Then for any $D \geq 1$, we have 
	\begin{equation*}
	\inf_{g \in \bbR[Y]_{\leq D}} \bbE_{(x, Y) \sim \bbP} (g(Y) - x )^2 = \bbE(x^2) - \Corr_{\leq D}^2,
\end{equation*} where the degree-$D$ correlation $\Corr_{\leq D}$ is defined as
\begin{equation} \label{def: Corr}
	\Corr_{\leq D}:= \sup_{ \substack{g \in \bbR[Y]_{\leq D} \\ \bbE_{\bbP}[g^2(Y)] \neq 0 }  }  \frac{\bbE_{(x, Y) \sim \bbP} [ g(Y) \cdot x] }{\sqrt{ \bbE_{ \bbP}[ g^2(Y) ] }},  
\end{equation}
and satisfies the property
\begin{equation*}
	\Corr_{\leq D}^2 \leq \sum_{ \balpha \in \{0,1 \}^N, 0 \leq | \balpha | \leq D } \frac{\kappa_{\balpha}^2(x, X)}{( \tau_0 (1-\tau_1) )^{|\balpha|}}.
\end{equation*} Here $\kappa_{\balpha}(x,X)$ is defined recursively by 
\begin{equation} \label{eq: kappa-recursive-relation}
	\kappa_0(x, X) = \bbE(x) \text{ and }\kappa_{\balpha}(x, X) = \bbE(x X^{\balpha}) - \sum_{0 \leq \bbeta \lneq \balpha } \kappa_{\bbeta}(x, X) {\balpha \choose \bbeta } \bbE[X^{\balpha - \bbeta}] \text{ for } \balpha \text{ such that } |\balpha| \geq 1.
\end{equation}
\end{Proposition}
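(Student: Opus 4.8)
\emph{Overview of the plan.} I would read the statement as three linked claims --- the exact value of the best degree-$D$ mean-squared error, the identification of that value through $\Corr_{\le D}$, and the cumulant upper bound on $\Corr_{\le D}^2$ --- and prove the first two by Hilbert-space geometry, reserving the real work for the third.

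\emph{Step 1: reduction to orthogonal projection.} First I would note that, since $Y \in \{0,1\}^N$, every polynomial in $\bbR[Y]_{\le D}$ is multilinear ($Y_i^2 = Y_i$), so this space equals $\mathrm{span}\{Y^\balpha : \balpha \in \{0,1\}^N,\ |\balpha| \le D\}$ and is a finite-dimensional, hence closed, subspace of $L^2(\bbP)$ (assuming $\bbE(x^2) < \infty$, so everything is well defined). Writing $\Pi x$ for the orthogonal projection of $x$ onto it, orthogonality of $x - \Pi x$ to the subspace gives $\bbE[f(Y)x] = \bbE[f(Y)\,\Pi x]$, so $\bbE(f(Y)-x)^2 = \bbE(x^2) + \|f - \Pi x\|_{L^2}^2 - \|\Pi x\|_{L^2}^2$; the infimum over $f \in \bbR[Y]_{\le D}$ is thus attained at $f = \Pi x$ and equals $\bbE(x^2) - \|\Pi x\|_{L^2}^2$. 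By Cauchy--Schwarz, $\bbE[f(Y)x] = \bbE[f(Y)\,\Pi x] \le \|f\|_{L^2}\,\|\Pi x\|_{L^2}$ with equality at $f = \Pi x$, so the supremum defining $\Corr_{\le D}$ equals $\|\Pi x\|_{L^2}$, which yields the stated identity.

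\emph{Step 2: cumulant expansion and the variance floor (the crux).} It remains to bound $\Corr_{\le D}^2 = \sup_f \bbE[f(Y)x]^2 / \bbE[f(Y)^2]$. Fix $f(Y) = \sum_\balpha \hat f_\balpha Y^\balpha$, the sum over $\balpha \in \{0,1\}^N$, $|\balpha| \le D$. Conditional independence gives $\bbE[Y^\balpha \mid X] = X^\balpha$, hence $\bbE[f(Y)x] = \sum_\balpha \hat f_\balpha \bbE[xX^\balpha]$; and the recursion \eqref{eq: kappa-recursive-relation}, rearranged, is precisely the moment--cumulant identity $\bbE[xX^\balpha] = \sum_{0 \le \bbeta \le \balpha} \kappa_\bbeta(x,X)\,\bbE[X^{\balpha - \bbeta}]$ (all binomial coefficients being $1$ on $\{0,1\}^N$). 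Substituting and exchanging the order of summation gives $\bbE[f(Y)x] = \sum_\bbeta \kappa_\bbeta(x,X)\,g_\bbeta$ with $g_\bbeta := \sum_{\balpha \ge \bbeta} \hat f_\balpha\,\bbE[X^{\balpha - \bbeta}]$, where only $|\bbeta| \le D$ contribute. The heart of the argument is the matching lower bound $\bbE[f(Y)^2] \ge \sum_\bbeta (\tau_0(1-\tau_1))^{|\bbeta|} g_\bbeta^2$, which I would obtain by re-expanding $f$ around its conditional mean: substituting $Y_i = X_i + (Y_i - X_i)$ in each monomial gives $f(Y) = \sum_\bbeta (Y-X)^\bbeta\,h_\bbeta(X)$, where $(Y-X)^\bbeta := \prod_{i:\,\bbeta_i = 1}(Y_i - X_i)$ and $h_\bbeta(X) := \sum_{\balpha \ge \bbeta} \hat f_\balpha X^{\balpha - \bbeta}$, so $\bbE[h_\bbeta(X)] = g_\bbeta$. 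Conditioned on $X$, the $(Y_i - X_i)$ are independent and mean-zero, so $\{(Y-X)^\bbeta\}_\bbeta$ is an orthogonal family with $\bbE[((Y-X)^\bbeta)^2 \mid X] = \prod_{i:\,\bbeta_i = 1} X_i(1-X_i)$; since $t \mapsto t(1-t)$ is concave and $X_i \in [\tau_0,\tau_1]$, each factor is $\ge \min\{\tau_0(1-\tau_0),\tau_1(1-\tau_1)\} \ge \tau_0(1-\tau_1)$. Hence $\bbE[f(Y)^2 \mid X] = \sum_\bbeta h_\bbeta(X)^2 \prod_{i:\,\bbeta_i = 1} X_i(1-X_i) \ge \sum_\bbeta (\tau_0(1-\tau_1))^{|\bbeta|} h_\bbeta(X)^2$, and taking $\bbE_X$ followed by Jensen ($\bbE[h_\bbeta(X)^2] \ge (\bbE h_\bbeta(X))^2 = g_\bbeta^2$) delivers the bound.

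\emph{Step 3: conclude and discussion of the obstacle.} Finally, Cauchy--Schwarz applied to $\bbE[f(Y)x] = \sum_\bbeta \big(\kappa_\bbeta\,(\tau_0(1-\tau_1))^{-|\bbeta|/2}\big)\cdot\big((\tau_0(1-\tau_1))^{|\bbeta|/2} g_\bbeta\big)$ with the two estimates from Step 2 gives $\bbE[f(Y)x]^2 \le \big(\sum_{|\bbeta| \le D} \kappa_\bbeta^2(x,X)/(\tau_0(1-\tau_1))^{|\bbeta|}\big)\,\bbE[f(Y)^2]$; taking the supremum over $f$ proves $\Corr_{\le D}^2 \le \sum_{\balpha \in \{0,1\}^N,\,0 \le |\balpha| \le D} \kappa_\balpha^2(x,X)/(\tau_0(1-\tau_1))^{|\balpha|}$. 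The single genuine obstacle is the lower bound on $\bbE[f(Y)^2]$ in Step 2: because the coordinates of $Y$ are only conditionally --- not marginally --- independent, one cannot read $\|f\|_{L^2}^2$ off the monomial coefficients directly, and the remedy is exactly the re-expansion $f(Y) = \sum_\bbeta (Y-X)^\bbeta h_\bbeta(X)$ combined with the uniform floor $\tau_0(1-\tau_1)$ on the conditional variances $X_i(1-X_i)$ --- which is precisely where the denominator in the stated bound originates. (If $\tau_0(1-\tau_1) = 0$, the right-hand side is read as $+\infty$ unless every $\kappa_\balpha$ with $|\balpha| \ge 1$ vanishes, so the inequality is trivial in that degenerate case.)
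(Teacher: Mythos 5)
The paper does not prove this proposition; it imports it from \cite{schramm2020computational} without proof, so there is no internal argument to compare against. Your blind proof is correct and is essentially the argument of the cited source: the Hilbert-space reduction in Step 1 is standard, the moment--cumulant rearrangement giving $\bbE[f(Y)x]=\sum_{\bbeta}\kappa_{\bbeta}g_{\bbeta}$ is right (all binomial coefficients being $1$ on $\{0,1\}^N$), and the crux --- the lower bound $\bbE[f(Y)^2]\geq\sum_{\bbeta}(\tau_0(1-\tau_1))^{|\bbeta|}g_{\bbeta}^2$ via the re-expansion $f(Y)=\sum_{\bbeta}(Y-X)^{\bbeta}h_{\bbeta}(X)$, conditional orthogonality of the centered monomials, the endpoint bound $X_i(1-X_i)\geq\tau_0(1-\tau_1)$, and Jensen --- is exactly the mechanism that produces the denominator $(\tau_0(1-\tau_1))^{|\balpha|}$ in Schramm and Wein's bound. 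The final Cauchy--Schwarz step and your handling of the degenerate case $\tau_0(1-\tau_1)=0$ are also fine; I see no gap.
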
 
We note that Proposition \ref{prop: schramm-wein-binary} provides a general $\ell_2$ estimation error lower bound for low-degree estimators of degree at most $D$. To show the low-degree polynomial lower bound in a specific problem, we then have to bound $\sum_{ \balpha \in \{0,1 \}^N, 0 \leq | \balpha | \leq D } \frac{\kappa_{\balpha}^2(x, X)}{( \tau_0 (1-\tau_1) )^{|\balpha|}}$, but to our knowledge, there is no easy and unified way to do that. One important interpretation for $\kappa_{\balpha}(x, X)$ is the following: if we view $\balpha$ as a multiset $\{ a_1, \ldots, a_m  \}$ with $m = \sum_{i=1}^N \balpha_i$, which contains $\balpha_i$ copies of $i$ for all $i \in [N]$, then $\kappa_{\balpha}(x, X)$ is the joint cumulant of a multiset of entries of the signal \citep[Claim 2.14]{schramm2020computational}:
\begin{equation} \label{eq: kappa-cumulant-connection}
	\kappa_{\balpha}(x, X) = \kappa(x, X_{a_1}, \ldots, X_{a_m}),
\end{equation} where $\kappa(\cdots)$ denotes the joint cumulant of a set of random variables and its formal definition and properties are provided in Appendix \ref{sec: cumulant}. This fact about $\kappa_{\balpha}(x, X)$ will be crucially used in our proofs of bounding $\Corr_{\leq D}$ for graphon estimation.

A similar result as Proposition \ref{prop: schramm-wein-binary} holds under the general additive Gaussian noise model as well. We defer the result in that setting to Appendix \ref{sec: comp-lower-bound-gaussian-model}.

\section{Computational Limits for Graphon Estimation in Stochastic Block Model}\label{sec: computational-limit-graphon-estimation}
We first define the parameter space of interest in SBM,
\begin{equation} \label{def: Mk}
\begin{split}
	\cM_k = \Big\{ & M = (M_{ij}) \in [0,1]^{  n \times n}:  M_{ii} = 0 \text{ for } i \in [n],  M_{ij} = M_{ji} = Q_{z_i z_j} \text{ for } i \neq j,\\ & \text{ for some }Q = Q^\top \in [0,1]^{k \times k}, z \in [k]^{n}\Big\}.	
\end{split}
\end{equation}
In other words, the connectivity probability between the $i$th and the $j$th nodes, $M_{ij}$, only depends on $Q$ through their clustering labels $z_i$ and $z_j$.
Given $M \in \cM_k$, we observe a random graph with adjacency matrix $A \in \{0,1 \}^{  n \times n}$ and its generative progress is given in \eqref{eq: graphon-model}. The minimax rate of estimating $M \in \cM_k$ is given in \eqref{eq: uncostrained-alg-rate}. It was shown in \cite{gao2015rate} that the minimax rate can be achieved by the solution of the following constrained least-squares optimization,
\begin{equation}
\min_{ M \in \cM_k}\|A-M\|_\F^2, \label{eq: constrained-least-squares}
\end{equation}
which, by the definition of $\cM_k$, is equivalent to
\begin{eqnarray*}
	 && \min_{ z\in [k]^n} \min_{Q \in \bbR^{k \times k}} \sum_{a,b\in[k]} \sum_{\substack{(i,j) \in z^{-1}(a) \times z^{-1}(b)\\ i\neq j}}  (A_{ij} - Q_{ab})^2 \\
	 &=& \min_{ z\in [k]^n} \sum_{a,b\in[k]} \sum_{\substack{(i,j) \in z^{-1}(a) \times z^{-1}(b)\\i\neq j}}  (A_{ij} - \widebar{A}_{ab}(z))^2,
\end{eqnarray*}
where $z^{-1}(a):=\{i\in[n]:z_i=a\}$, and
$$\widebar{A}_{ab}(z) := \begin{cases}\frac{1}{|z^{-1}(a)| |z^{-1}(b)|} \sum_{i \in z^{-1}(a) } \sum_{j \in z^{-1}(b)} A_{ij} & a\neq b ,\\
\frac{1}{|z^{-1}(a)|(|z^{-1}(a)|-1)}\sum_{\substack{i,j\in z^{-1}(a)\\ i\neq j}}A_{ij} & a=b.
\end{cases}$$
Unfortunately, since the optimization problem involves searching over all clustering patterns, it is computationally expensive to solve and has runtime exponential in $n$. 

This motivates a line of work on searching for polynomial-time algorithms. Among many of them, a prominent one is the universal singular value thresholding (USVT) proposed in \cite{chatterjee2015matrix}. It is a simple and versatile method for structured matrix estimation and has been applied to a variety of different problems such as low-rank matrix estimation, distance matrix completion, graphon estimation and ranking \citep{chatterjee2015matrix,shah2016stochastically}. In particular, given the SVD of $A = U \Sigma V^\top = \sum_{i=1}^n \sigma_i(A) u_i v_j^\top$, where $\sigma_i(A)$ denotes the $i$-th largest singular value of $A$, USVT estimates $M$ by
\begin{equation}
\hM_{\USVT}(\tau) = \sum_{i: \sigma_i(A) > \tau} \sigma_i(A) u_i v_i^\top,\label{eq:USVT-def}
\end{equation}
where $\tau$ is a carefully chosen tuning parameter.  
In the original paper by \cite{chatterjee2015matrix}, it was proved that USVT achieves the error rate $\sqrt{k/n}$ in estimating $M\in\cM_k$. Later, the error rate of USVT was improved to $k/n$ via a sharper analysis \citep{klopp2019optimal,xu2018rates}. Other polynomial-time algorithms in the literature \citep{airoldi2013stochastic,chan2014consistent,zhang2017estimating,li2019nearest,borgs2021consistent,gaucher2021optimality} either achieve error rates no better than $k/n$ or require additional assumptions on the matrix $M$. In this section, we will show that $k/n$ is the best possible error rate that can be achieved by low-degree polynomial algorithms.

In order to apply the general tool given by Proposition \ref{prop: schramm-wein-binary}, one needs to find a prior distribution supported on $\cM_k$ and compute all the joint cumulants under this prior. It turns out that the analysis of the cumulants is intractable under the least favorable prior constructed by \cite{gao2015rate} to prove the minimax lower bound. We need a simpler prior to apply Proposition \ref{prop: schramm-wein-binary}. To this end, we introduce a special class of SBM models considered in the community detection literature, denoted by $\cM_{k,p,q}$ ($0 \leq q < p \leq 1$), whose definition is given by
\begin{equation} \label{def: Mkpq}
\begin{split}
	\cM_{k,p,q} = \Big\{ & M = (M_{ij}) \in [0,1]^{  n \times n}:  M_{ii} = 0 \text{ for } i \in [n], \\
	& M_{ij} = M_{ji} = p \indi(z_i = z_j) + q \indi(z_i \neq z_j) \text{ for } i \neq j\text{ for some }z \in [k]^{n}\Big\}.
\end{split}
\end{equation}
Since $\cM_{k,p,q}$ is much simpler than $\cM_k$, it is not clear that it would lead to a sharp computational lower bound. However, we will show that when the algorithms are restricted within the class of low-degree polynomials, graphon estimation under $\cM_{k,p,q}$ can be as difficult as that under $\cM_k$.
 
We consider the following natural prior distribution $\bbP_{\SBM (p,q)}$ supported on $\cM_{k,p,q}$. In particular, $M \sim \bbP_{\SBM (p,q)}$ can be generated as follows: first, sample $z \in [k]^{n}$ according to $z_i \overset{i.i.d.}\sim \text{Unif}\{1,\ldots, k\}$ for all $i \in [n]$; then let $M_{ij} = p \indi(z_i = z_j) + q \indi(z_i \neq z_j)$ for all $1 \leq i < j \leq n$ and $M_{ii} = 0$ for all $i \in [n]$. Our first main result shows that when the SNR of $\cM_{k,p,q}$ is smaller than a certain threshold, then the estimation error of any low-degree polynomial estimator can be bounded from below. 
\begin{Theorem} \label{th: comp-limit-graphon-est} For any $0 < r <1$ and $D \geq 1$, if 
\begin{equation} \label{eq: SNR-condition}
	\frac{(p-q)^2}{q(1-p)} \leq \frac{r}{(D(D+1))^2} \left(\frac{k^2}{n} \wedge 1 \right),
\end{equation} then we have
\begin{equation} \label{ineq: low-degree-estimation-lower-bound-main}
	\inf_{\hM \in \bbR[A]^{n \times n}_{\leq D} } \bbE_{A, M \sim \bbP_{\SBM (p,q)}} (\ell(\hM, M)) \geq \frac{(p-q)^2}{k} - (p-q)^2 \left( \frac{1}{k^2} + \frac{r(2-r)}{(1-r)^2 n}  \right).
\end{equation} Here the notation $\hM \in \bbR[A]^{n \times n}_{\leq D}$ means that for all $(i,j) \in [n] \times [n]$, we have $\hM_{ij} \in \bbR[A]_{\leq D}$.
\end{Theorem}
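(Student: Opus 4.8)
The plan is to localise the problem to one off-diagonal entry, invoke Proposition~\ref{prop: schramm-wein-binary}, and then control the resulting sum of squared cumulants using a cancellation for ``tree-shaped'' monomials of $A$. Since $\ell(\hM,M)$ is the average of the entrywise errors $(\hM_{ij}-M_{ij})^2$ and the constraint $\hM\in\bbR[A]^{n\times n}_{\le D}$ decouples across $(i,j)$, we have $\inf_{\hM}\bbE(\ell(\hM,M)) = \binom{n}{2}^{-1}\sum_{i<j}\inf_{f\in\bbR[A]_{\le D}}\bbE_{A,M}(f(A)-M_{ij})^2$; by the relabelling invariance of $\bbP_{\SBM(p,q)}$ every summand is the same, so it suffices to lower bound $\inf_{f\in\bbR[A]_{\le D}}\bbE_{A,M}(f(A)-M_{12})^2$. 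Applying Proposition~\ref{prop: schramm-wein-binary} with $N=\binom{n}{2}$, $X$ the vectorised upper triangle of $M$, $Y=A$, target $x=M_{12}$, and $(\tau_0,\tau_1)=(q,p)$ (legitimate because $M_{ij}\in\{q,p\}$ almost surely) gives, upon noting $\kappa_0(M_{12},M)=\bbE M_{12}$,
\[
\inf_{f\in\bbR[A]_{\le D}}\bbE(f(A)-M_{12})^2 \ \ge\ \var(M_{12}) - \Sigma,\qquad \Sigma:=\sum_{1\le|\balpha|\le D}\frac{\kappa_\balpha^2(M_{12},M)}{(q(1-p))^{|\balpha|}}.
\]
A direct computation gives $\var(M_{12})=(p-q)^2(\frac1k-\frac1{k^2})$, which accounts for the $\frac{(p-q)^2}{k}-\frac{(p-q)^2}{k^2}$ part of \eqref{ineq: low-degree-estimation-lower-bound-main}, so everything reduces to showing $\Sigma\le (p-q)^2\,\frac{r(2-r)}{(1-r)^2\,n}$ under \eqref{eq: SNR-condition}.

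Next I would reduce the cumulants to the label-collision variables. Writing $M_e = q+(p-q)W_e$ with $W_{uv}=\indi(z_u=z_v)$, for $|\balpha|=m\ge1$ the cumulant $\kappa_\balpha(M_{12},M)$ has order $\ge2$, hence by translation-invariance and multilinearity $\kappa_\balpha(M_{12},M)=(p-q)^{m+1}\widetilde\kappa_\balpha$, where $\widetilde\kappa_\balpha$ is the joint cumulant of $W_{12}$ together with $\{W_e:e\in\balpha\}$; equivalently, via \eqref{eq: kappa-cumulant-connection}, $\widetilde\kappa_\balpha$ is the joint cumulant of the edge multiset $\scT:=\{12\}\cup\balpha$. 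With $\rho:=(p-q)^2/(q(1-p))$ this yields $\Sigma=(p-q)^2\sum_{m=1}^{D}\rho^{m}\sum_{|\balpha|=m}\widetilde\kappa_\balpha^{2}$, so it remains to bound $\sum_{|\balpha|=m}\widetilde\kappa_\balpha^{2}$ for each $m$.

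The crux is the following cancellation. By the independence-of-blocks property of joint cumulants, $\widetilde\kappa_\balpha=0$ unless $\scT$ is connected; and if $\scT$ is a tree with all edges distinct, then every sub-product obeys $\bbE\prod_{e\in B}W_e=k^{-|B|}$ (a forest forces agreement within components, independently across them), so the moment--cumulant (M\"obius) expansion telescopes to $k^{-(m+1)}\sum_{\pi}(-1)^{|\pi|-1}(|\pi|-1)!=0$ for $m\ge1$. Since a connected $\scT$ with $m+1$ edges (counted with multiplicity) can span $m+2$ distinct vertices only if all its edges are distinct and form a tree, it follows that $\widetilde\kappa_\balpha=0$ unless $|V(\scT)|\le m+1$. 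A crude M\"obius bound together with $\bbE\prod_{e\in B}W_e=k^{-(|V(B)|-c(B))}$ gives $|\widetilde\kappa_\balpha|\le C(m)\,k^{-(|V(\scT)|-1)}$ for an explicit $C(m)$, and there are at most $n^{v-2}C'(m)$ multigraphs $\balpha$ with $|\balpha|=m$ and $|V(\scT)|=v$; summing over $v\le m+1$ and using $k\le\sqrt n$ (so the term $v=m+1$ dominates) gives $\sum_{|\balpha|=m}\widetilde\kappa_\balpha^{2}\le C''(m)\,n^{m-1}/k^{2m}$ for an explicit $C''(m)$. Plugging this in, using \eqref{eq: SNR-condition} in the form $\rho\le \frac{r}{(D(D+1))^2}\cdot\frac{k^2}{n}$, and checking (with $m\le D$) that $C''(m)\le(m+1)(D(D+1))^{2m}$, we obtain $\rho^{m}\sum_{|\balpha|=m}\widetilde\kappa_\balpha^{2}\le (m+1)r^m/n$; summing $\sum_{m\ge1}(m+1)r^m=\frac{r(2-r)}{(1-r)^2}$ then gives $\Sigma\le(p-q)^2\frac{r(2-r)}{(1-r)^2 n}$, completing the proof.

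The main obstacle is the tree cancellation in the third step: without it the dominant monomials would be the spanning trees of $\scT$ on $m+2$ vertices, which would produce a bound of order $(p-q)^2/k^2$ rather than $(p-q)^2/n$ --- far too weak since $k\le\sqrt n$, and useless for recovering the sharp $k/n$ rate after optimising over $(p,q)$. Showing that every such tree term vanishes, and then bounding the surviving (essentially unicyclic) contributions and their multiplicities uniformly in $D$, is the delicate part; verifying that the resulting combinatorial constants $C''(m)$ are absorbed by the $(D(D+1))^2$ in \eqref{eq: SNR-condition} for all $m\le D$ is routine but requires some care with the factorial bookkeeping.
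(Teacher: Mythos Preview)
Your outline is correct and arrives at the same bound, but the mechanism you use to kill the leading terms is genuinely different from the paper's. The paper first conditions on $(z_M)_1=1$ (which, by symmetry, does not change the Bayes risk); under this ``fixed first vertex'' prior the vanishing $\kappa_{\balpha}=0$ is obtained by \emph{independence}: once $z_1$ is fixed, $W_{12}$ depends only on $z_2$, so the cumulant is zero unless $\balpha$ itself is connected with both $1$ and $2$ in $V(\balpha)$ (Proposition~\ref{prop: kappa-property-SBM}(i)(ii)), and the surviving cumulants are then bounded by induction on the recursion~\eqref{eq: kappa-recursive-relation} (Proposition~\ref{prop: kappa-property-SBM}(iii)), combined with the count in Lemma~\ref{lm: num-nonzero-alpha}. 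You instead stay under the unconditioned prior and kill the top term by \emph{tree cancellation}: when $\scT=\{12\}\cup\balpha$ is a tree, the indicators $\{W_e\}_{e\in\scT}$ have the same joint moments as i.i.d.\ $\Bern(1/k)$'s (because every subforest $B$ satisfies $\bbE\prod_{e\in B}W_e=k^{-|B|}$), so their joint cumulant of order $\ge 2$ vanishes. This is a clean, self-contained observation that does not appear in the paper; it forces $|V(\scT)|\le m+1$ and yields the same $n^{v-2}/k^{2(v-1)}$ power counting. The paper's route has the advantage that the inductive bound $(|\balpha|+1)^{|\balpha|}$ and the count $n^{d-h-1}d^{d+h}$ are explicit and immediately absorbable by $(D(D+1))^{2m}$; your M\"obius/counting bounds need to be tracked a bit more carefully (the most naive $\binom{\binom{v}{2}}{m}$ count is slightly too large), though they do go through with a Lemma~\ref{lm: num-nonzero-alpha}--style enumeration. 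Two small points to patch: (a) you invoke $k\le\sqrt{n}$ to pick the dominant $v$, but the theorem covers all $k$ via $\min(1,k^2/n)$; when $k>\sqrt n$ the dominant term is $v=2$, giving $1/k^2<1/n$, so the conclusion is even stronger and no new idea is needed; (b) the matroid-rank inequality $\sum_{B\in\pi}(|V(B)|-c(B))\ge |V(\scT)|-1$ that underlies your ``crude M\"obius bound'' deserves to be stated explicitly.
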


To understand the result of Theorem \ref{th: comp-limit-graphon-est}, let us consider the special case $k\leq \sqrt{n}$ and ignore the second term on the right-hand side of (\ref{ineq: low-degree-estimation-lower-bound-main}). Then, Theorem \ref{th: comp-limit-graphon-est} indicates that whenever
\begin{equation}
\frac{n(p-q)^2}{k^2q(1-p)} \ll 1, \label{eq:simp-ks}
\end{equation}
the graphon estimation error cannot be better than $\frac{(p-q)^2}{k}$. We remark that $\frac{(p-q)^2}{k}$ is in fact a trivial error under the prior distribution $M \sim \bbP_{\SBM (p,q)}$, since it can be achieved by the constant estimator $\hM_{ij}=q$ for all $i\neq j$. One may recognize that the SNR condition (\ref{eq:simp-ks}) is related to the well-known {\it Kesten-Stigum} threshold \citep{kesten1966additional,decelle2011asymptotic} in the literature of community detection (See Section \ref{sec: community-detection} for more details). With arguments from statistical physics, it was conjectured that when the number of communities $k$ is a constant, non-trivial community detection is possible in polynomial time whenever
\begin{equation}
\frac{n(p-q)^2}{k(p+(k-1)q)} > 1, \label{eq:ks}
\end{equation}
at least under the asymptotic regime $p=a/n$ and $q=b/n$ for some constants $a>b$.  For general $p$ and $q$ such that $p\lesssim q< p<0.99$, the two SNRs on the left-hand sides of (\ref{eq:simp-ks}) and (\ref{eq:ks}) are of the same order. In fact, \eqref{eq:simp-ks} could be regarded as an asymptotic extension or generalized version of \eqref{eq:ks} when $k$ grows \citep{brennan2020reducibility} and \cite{chen2016statistical} conjectures (see their Conjecture 9) that it is the computational limits for community detection in SBM with a growing number of communities. Hence, Theorem \ref{th: comp-limit-graphon-est} simply says non-trivial graphon estimation is not possible below the {\it generalized Kesten-Stigum} threshold under the parameter space $\cM_{k,p,q}$.
	
To find a tight computational lower bound for graphon estimation under the original SBM class $\cM_k$, we define
\begin{equation}
\cM_k' = \bigcup_{0 \leq q \leq p \leq 1} \cM_{k,p,q}. \label{def: Mk-prime}
\end{equation}
Observe that $\cM_k'\subset \cM_k$, and we have
\begin{eqnarray}
\nonumber \inf_{\hM \in \bbR[A]^{n \times n}_{\leq D} } \sup_{M \in \cM_k} \bbE( \ell(\hM,M) ) &\geq& \inf_{\hM \in \bbR[A]^{n \times n}_{\leq D} } \sup_{M \in \cM_k'} \bbE( \ell(\hM,M) ) \\
\label{eq:later}&\geq& \inf_{\hM \in \bbR[A]^{n \times n}_{\leq D} } \bbE_{A, M \sim \bbP_{\SBM (p,q)}} (\ell(\hM, M)).
\end{eqnarray}
Since the above inequality holds for arbitrary $0 \leq q \leq p \leq 1$, we can find a pair of $p$ and $q$ to maximize the right-hand side of (\ref{ineq: low-degree-estimation-lower-bound-main}) under the SNR constraint (\ref{eq: SNR-condition}). This immediately leads to the following result.

\begin{Corollary} \label{coro: graphon-estimation-final-lower-bound}
Suppose $k \geq 2$. For any $D \geq 1$, there exists a universal constant $c > 0$ such that
	\begin{equation*}
		\inf_{\hM \in \bbR[A]^{n \times n}_{\leq D} } \sup_{M \in \cM_k} \bbE( \ell(M, \hM) ) \geq \frac{c}{D^4}\left( \frac{k}{n} \wedge \frac{1}{k} \right). 
	\end{equation*}
\end{Corollary}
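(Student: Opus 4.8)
The plan is to obtain Corollary~\ref{coro: graphon-estimation-final-lower-bound} directly from Theorem~\ref{th: comp-limit-graphon-est} by feeding a judicious choice of the free parameters $(p,q,r)$ into the chain of inequalities \eqref{eq:later}. Recall that \eqref{eq:later} already reduces the $\sup_{M\in\cM_k}$ risk to the Bayes risk under $\bbP_{\SBM(p,q)}$ for \emph{every} $0\leq q\leq p\leq 1$, and Theorem~\ref{th: comp-limit-graphon-est} lower bounds the latter by $(p-q)^2\bigl[\tfrac1k-\tfrac1{k^2}-\tfrac{r(2-r)}{(1-r)^2 n}\bigr]$ whenever the SNR condition \eqref{eq: SNR-condition} holds. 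So the entire proof is a parameter-optimization: choose $(p,q,r)$ so that $(p-q)^2$ is as large as the SNR budget allows, while the bracketed factor stays bounded below by a constant multiple of $1/k$.

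First I would make the harmless reduction to $2\leq k\leq n$: for $k>n$ one has $\cM_k=\cM_n$, and the asserted rate $\tfrac1k\wedge\tfrac kn=\tfrac1k$ is weaker than the $k=n$ bound $\tfrac1n$, so that case follows a fortiori. Next, fix $r$ to be a sufficiently small universal constant (e.g.\ $r=1/10$), chosen so that $\tfrac{r(2-r)}{(1-r)^2}\leq\tfrac14$. For the connectivity parameters I would take the \emph{symmetric, balanced} choice $p=\tfrac{1+\delta}{2}$, $q=\tfrac{1-\delta}{2}$, which makes $q(1-p)=\tfrac{(1-\delta)^2}{4}$ as large as possible for a given gap $\delta=p-q$; the SNR condition \eqref{eq: SNR-condition} then becomes $\tfrac{4\delta^2}{(1-\delta)^2}\leq B$, where $B:=\tfrac{r}{(D(D+1))^2}\bigl(\tfrac{k^2}{n}\wedge 1\bigr)$. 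Since $D\geq 1$ forces $B\leq r/4<1$, the value $\delta=\tfrac{\sqrt B}{2+\sqrt B}\in(0,\tfrac12)$ makes this hold with equality, and it gives $p,q\in(\tfrac13,\tfrac23)\subset[0,1]$ and $(p-q)^2=\delta^2\geq B/9$.

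Plugging these choices into Theorem~\ref{th: comp-limit-graphon-est} and \eqref{eq:later}, I would estimate
\[
\inf_{\hM\in\bbR[A]^{n\times n}_{\leq D}}\sup_{M\in\cM_k}\bbE(\ell(\hM,M))\ \geq\ (p-q)^2\Bigl[\tfrac{k-1}{k^2}-\tfrac{r(2-r)}{(1-r)^2 n}\Bigr]\ \geq\ \tfrac B9\Bigl[\tfrac1{2k}-\tfrac1{4n}\Bigr]\ \geq\ \tfrac{B}{36k},
\]
where the middle step uses $\tfrac{k-1}{k^2}\geq\tfrac1{2k}$ for $k\geq 2$ together with $\tfrac{r(2-r)}{(1-r)^2}\leq\tfrac14$, and the last step uses $n\geq k$. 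Finally, $\tfrac1k\bigl(\tfrac{k^2}{n}\wedge1\bigr)=\tfrac kn\wedge\tfrac1k$ and $(D(D+1))^2\leq 4D^4$, so $\tfrac{B}{36k}\geq\tfrac{r}{144\,D^4}\bigl(\tfrac kn\wedge\tfrac1k\bigr)$, establishing the claim with $c=r/144$.

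I do not expect a serious obstacle: all the genuine content lives in Theorem~\ref{th: comp-limit-graphon-est}, and the corollary is a bookkeeping exercise. The two points that need mild care are (i)~verifying that the balanced choice $q\approx p\approx\tfrac12$ really extracts the maximal $(p-q)^2\asymp\tfrac1{D^4}\bigl(\tfrac{k^2}{n}\wedge1\bigr)$ out of the SNR budget while staying inside $[0,1]^2$ --- this is exactly where the factor $q(1-p)$ in \eqref{eq: SNR-condition} is exploited; and (ii)~checking that the ``trivial-error'' subtractions $\tfrac1{k^2}$ and the $\Theta(1/n)$ term in Theorem~\ref{th: comp-limit-graphon-est} do not dominate the leading $\tfrac1k$, which is secured by $k\geq 2$, the reduction $k\leq n$, and the small choice of $r$.
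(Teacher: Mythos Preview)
Your proposal is correct and follows essentially the same approach as the paper: apply Theorem~\ref{th: comp-limit-graphon-est} via the chain \eqref{eq:later} and optimize over $(p,q,r)$ to saturate the SNR constraint \eqref{eq: SNR-condition} while keeping the bracketed factor $\asymp 1/k$. Your explicit symmetric choice $p=\tfrac{1+\delta}{2}$, $q=\tfrac{1-\delta}{2}$ and your reduction to $k\leq n$ are a bit more careful than the paper's somewhat terse parameter selection, but the strategy is identical.
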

When $k \leq \sqrt{n}$, the result in Corollary \ref{coro: graphon-estimation-final-lower-bound} reduces to Theorem \ref{th: intro-theorem}. Under the low-degree polynomial conjecture \citep{hopkins2018statistical} with $D = \log^{1+\epsilon} n$, the lower bound $\frac{k}{nD^4}$ matches the rate (\ref{eq: poly-time-alg-rate}) achieved by USVT up to some logarithmic factors. This is a bit surprising since Corollary \ref{coro: graphon-estimation-final-lower-bound} is actually proved for a much smaller parameter space $\cM_k'$ than the original one $\cM_k$. This provides a valuable insight that in the regime $k \leq \sqrt{n}$, the simple SBM prior $\bbP_{\SBM (p,q)}$ provides "computationally" a least favorable prior for graphon estimation.

When $k>\sqrt{n}$, however, the rate $\frac{1}{kD^4}$ does not match the performance of the USVT. This may result from the fact that the computational limits of the two spaces $\cM_k'$ and $\cM_k$ are different when $k$ is large. We will verify in the following Section \ref{sec: upper-bound-Mkpq} that the rate $\frac{1}{kD^4}$ is actually sharp if we consider the smaller space $\cM_k'$.

\subsection{Optimality of Theorem \ref{th: comp-limit-graphon-est}} \label{sec: upper-bound-Mkpq}

Our main result Theorem \ref{th: comp-limit-graphon-est} leads to the lower bound rate $\frac{k}{n}\wedge \frac{1}{k}$ in Corollary \ref{coro: graphon-estimation-final-lower-bound} for graphon estimation under the SBM class $\cM_k$. When $k>\sqrt{n}$, this rate becomes $\frac{1}{k}$, and does not match the upper bound achieved by USVT. In fact, since $\frac{1}{k}$ is even smaller than the minimax rate (\ref{eq: uncostrained-alg-rate}) when $k>n^{2/3}$, it cannot be the sharp. We will argue in this section that the sub-optimal rate $\frac{1}{k}$ is due to the choice of the subset $\cM_k'$ instead of an artifact of the proof of Theorem \ref{th: comp-limit-graphon-est}. The Bayes risk of Theorem \ref{th: comp-limit-graphon-est} with respect to the prior $\bbP_{\SBM (p,q)}$ (supported on $\cM_k'$) is optimal, and an improvement of the rate $\frac{1}{k}$ must involve a different subset.

Recall the definition $\cM_k' = \bigcup_{0 \leq q \leq p \leq 1} \cM_{k,p,q}$. Theorem \ref{th: comp-limit-graphon-est} and the inequality (\ref{eq:later}) imply
\begin{eqnarray*}
&& \inf_{\hM \in \bbR[A]^{n \times n}_{\leq D} } \sup_{M \in \cM_k'} \bbE( \ell(\hM, M) ) \\
&\geq& \inf_{\hM \in \bbR[A]^{n \times n}_{\leq D} } \sup_{0\leq q\leq p\leq 1}\bbE_{A, M \sim \bbP_{\SBM (p,q)}} (\ell(\hM, M)) \\
&\geq& \frac{c}{D^4} \left( \frac{k}{n} \wedge \frac{1}{k}  \right).
\end{eqnarray*}
The above lower bound cannot be improved. To see this, consider the following algorithm,
\begin{equation}
		\begin{split}
			\hM = \left\{\begin{array}{c c}
				\hM_{\USVT}(\tau) & k \leq \sqrt{n},\\
				\hM_{\mean} & k \geq \sqrt{n},
			\end{array}  \right.
		\end{split}\label{eq:yo}
	\end{equation}
where $(\hM_{\mean})_{ij} = (\hM_{\mean})_{ji} = \sum_{1 \leq u < v \leq n} A_{uv}/{n \choose 2}$. When $k \leq \sqrt{n}$, the USVT with $\tau \asymp \sqrt{n}$ achieves the rate $\frac{k}{n}$ \citep{xu2018rates}. When $k>\sqrt{n}$, a straightforward calculation (see Appendix \ref{app: guarantee-of-M-mean}) leads to
$$\sup_{0\leq q\leq p\leq 1}\bbE_{A, M \sim \bbP_{\SBM (p,q)}} (\ell(\hM_{\mean}, M))\leq C\frac{1}{k}.$$
In addition, for any $M\in\cM_k'$, if we further assume $\frac{n}{\beta k}\leq\sum_{i=1}^n\indi((z_M)_i = a)\leq \frac{n\beta}{k}$ for all $a\in[k]$ for some constant $\beta>1$, we also have $\bbE_A( \ell(\hM_{\mean}, M) ) \leq C\frac{1}{k}$. In other words, the estimator (\ref{eq:yo}) achieves the rate $\frac{k}{n}\wedge \frac{1}{k}$, and thus the lower bound cannot be improved.

As we have discussed in Section \ref{sec: computational-limit-graphon-estimation}, our low-degree polynomial lower bounds for graphon estimation are derived by the connection to community detection. When $k>\sqrt{n}$, it is likely that the computational limits of the two problems are very different. A sharp lower bound for graphon estimation probably requires the construction of a very different subset than $\cM_k'$. We leave this problem open.

\subsection{A Matching Low-degree Polynomial Upper Bound for $\bbP_{\SBM (p,q)}$} \label{sec:low-degree-up-bound}
Though the error rate of USVT matches our low-degree polynomial lower bound when $k \leq \sqrt{n}$, it is not strictly a low-degree polynomial algorithm, i.e., its entry cannot be written as a polynomial of entries of $A$. In this section, we provide a rigorous low-degree polynomial algorithm with near-optimal guarantees. For technical convenience, we consider the setting $M  \sim \bbP_{\SBM (p,q)}$ as defined in Section \ref{sec: computational-limit-graphon-estimation}. \footnote{In fact, when $k\leq \sqrt{n}$, one can show, via similar arguments in \cite{gao2015rate}, that the information-theoretically optimal rate under $M  \sim \bbP_{\SBM (p,q)}$ is still $\tilde{\Theta}(1/n)$ for the least favorable pair of $(p,q)$.} The algorithm is described below.
		
		\begin{algorithm}[!h] \caption{Low-degree Polynomial Algorithm for SBM Graphon Estimation}\label{alg:low-degree-alg}
			\begin{algorithmic}[1]
				\State \textbf{Input:} $A,p,q$, $k$, $r$, $t_1$ and $t_2$.
				\State (Fill the diagonal and transform the data) Let $\Lambda \in \bbR^{n \times n}$ be a diagonal matrix with i.i.d. $\Bern(p)$ entries on its diagonal and they are independent of $A$; let $\tA = A + \Lambda - q \1_n \1_n^\top$.
				\State (Power iteration) Generate an independent random matrix $B \in \bbR^{p \times r}$ with i.i.d. $N(0,1)$ entries; compute $\tA^{t_1} B$.
				\State (Gradient descent) Run $t_2$ iterations of gradient descent (GD) with zero initialization on the objective $\min_{W \in \bbR^{r \times n}} \|\tA^{t_1} B W - \tA\|_\F^2$, i.e.,
				for $l = 0$ to $t_2-1$, compute $$W_{l+1} = W_l - \eta B^\top \tA^{t_1}( \tA^{t_1} B W_l - \tA ) \quad \textnormal{ with } \quad W_0 = \0.$$				\State \textbf{Output:} $\hM = \tA^{t_1} B W_{t_2} + q \1_n \1_n^\top$.
			\end{algorithmic}
		\end{algorithm}
The main idea of Algorithm \ref{alg:low-degree-alg} is to simulate SVD via power iteration. However, power iteration does not lead to the right scaling without additional normalization, and this motivates us to run a further least-squares optimization to normalize the matrix. Least-squares is not a low-degree algorithm since it involves matrix inverse, and this is simulated via gradient descent. 

By simple counting, one can show that each entry of $\hM$ is a polynomial of entries of $A,\Lambda,B$ with degree at most $2t_1 t_2$. The guarantee of $\hM$ returned by Algorithm \ref{alg:low-degree-alg} is given as follows.
\begin{Theorem} \label{th:low-degree-up-bound}
 Take $r = 2k$, $t_1 = t_2 = C' \log n$ and the stepsize of GD to be $\eta = \frac{1}{C''\left( (\frac{n(p-q)}{k} + C'' \sqrt{n} )^{2t_1} k \vee (C'' n)^{t_1 + 1}  \right)} $ for some large $C', C'' > 0$ in Algorithm \ref{alg:low-degree-alg}. Then there exist $c,C,\bar{C}  > 0$ depending only on $C',C''$ such that when $n \geq C k \log^3 n$, we have with $\bbP_{\SBM (p,q)}$-probability at least $1-n^{-\bar{C}}$, the $\hM$ in Algorithm \ref{alg:low-degree-alg} satisfies $\ell(\hM, M) \leq \frac{c (k+\log n) \log^2 n}{n}$. 
\end{Theorem}
To summarize, $\hM$ is a $O(\log^2 n)$-degree polynomial estimator that achieves the $k/n$ error rate up to logarithmic factors. One important feature of Algorithm \ref{alg:low-degree-alg} is that it works for any $p, q \in [0,1]$ and it automatically adapts between situations with a spectral gap or not. In addition, we note that in order to make the algorithm work, it is important to choose $r$ to satisfy $r/k > 1$ for the sketching matrix $B$ in Step 3. With this choice, the least-squares optimization is well-conditioned and gradient descent achieves a linear rate of convergence in the high SNR regime when there is a spectral gap. In the low SNR regime without a spectral gap, gradient descent after $t_2=O(\log n)$ iterations stays close to the zero initialization, which still works for our purpose.

Compared with the low-degree upper bounds in \cite{schramm2020computational} where a single power iteration is needed in planted submatrix and dense subgraph problems, we have to run a logarithmic number of power iterations followed by a logarithmic number of iterations of gradient descent. The logarithmic number of power iterations seems to be necessary for us to extract the subspace information of $A$. In general, the proposed algorithm can understood as a principled way of simulating spectral algorithms via low-degree polynomials. On the other hand, we note that even though our algorithm is polynomial-time, it has degree $O(\log^2 n)$. It will be interesting to find a $O(\log n)$-degree algorithm to simulate spectral algorithms.

\begin{Remark}
	Careful readers may notice that our estimator $\hM$ is a low-degree polynomial of entries of $A$ as well as independently generated $\Lambda$ and $B$, while our low-degree polynomial lower bounds in Section \ref{sec: computational-limit-graphon-estimation} are proved for the class of deterministic polynomials. However, this is not an issue since the low-degree polynomial lower bounds will continue to hold if we consider the class of polynomials of $A,\Lambda$ and $B$. This is due to the fact that cumulants on two groups of independent random variables are zero (see Proposition \ref{prop: cumulant-independent-prop} in Appendix \ref{sec: preliminary-low-degree}). The same issue has also been dealt with in Claim A.1 by \cite{schramm2020computational}. 
\end{Remark}

\section{Computational Limits for Nonparametric Graphon Estimation}\label{sec: nonparametric-graphon-est}

Let us proceed to nonparametric graphon estimation. We first introduce a class of H\"older smooth graphon. Since graphons are symmetric functions, we only need to consider functions on $\cD = \{(x,y) \in [0,1] \times [0,1]: x \geq y \}$. Define the derivative operator by
\begin{equation*}
	\nabla_{jk} f (x,y) = \frac{\partial^{j+k}}{(\partial x)^j (\partial y)^k} f(x,y),
\end{equation*}
and we adopt the convention $\nabla_{00} f(x,y) = f(x,y)$. Given a $\gamma > 0$, the H\"older norm of $f$ is defined as
\begin{equation*}
	\|f\|_{\cH_{\gamma}} = \max_{j +k \leq \lfloor \gamma \rfloor  } \sup_{(x,y) \in \cD} \left| \nabla_{jk} f(x,y) \right| + \max_{j +k = \lfloor \gamma \rfloor } \sup_{(x,y) \neq (x',y') \in \cD} \frac{\left| \nabla_{jk} f(x,y) - \nabla_{jk}f(x', y') \right|}{( |x-x'| + |y-y'|  )^{\gamma - \lfloor \gamma \rfloor }},
\end{equation*} and the H\"older class with smoothness parameter $\gamma > 0$ and radius $L > 0$ is defined as
\begin{equation*}
	\cH_{\gamma}(L) = \{\|f\|_{\cH_{\gamma}} \leq L: f(x,y) = f(y,x) \text{ for } x \geq y \}.
\end{equation*} 
Finally, the class of smooth graphon of interest is 
\begin{equation*}
	\cF_{\gamma}(L) = \{ 0 \leq f \leq 1: f \in \cH_{\gamma}(L) \}.
\end{equation*}

The minimax rate of estimating $f \in \cF_{\gamma}(L)$ is given by \eqref{eq: uncostrained-alg-rate-holder}. Note that this rate can also be written as
\begin{equation} 
	\min_{k} \left( \frac{k^2}{n^2} +\frac{\log k}{n} + k^{-2(\gamma\wedge 1)} \right) \asymp \left\{ \begin{array}{c c}
			n^{ - \frac{2\gamma}{\gamma+1} } & 0 < \gamma < 1,\\
			\frac{\log n}{n} &  \gamma \geq 1,
		\end{array} \right. \label{eq:bvt-sbm-minimax}
\end{equation}
where the first term $\frac{k^2}{n^2} +\frac{\log k}{n}$ is the minimax rate of graphon estimation under the SBM class $\cM_k$, and the second term $k^{-2(\gamma\wedge 1)}$ is the error of approximating a nonparametric graphon $f \in \cF_{\gamma}(L)$ by an SBM with $k$ blocks (Lemma 2.1 of \cite{gao2015rate}). A rate-optimal estimator can be constructed by the same constrained least-squares optimization (\ref{eq: constrained-least-squares}) with $k$ chosen to be $\lceil n^{\frac{1}{1+ \gamma \wedge 1}} \rceil$, i.e., the solution to the bias-variance tradeoff (\ref{eq:bvt-sbm-minimax}). Despite its statistical optimality, solving (\ref{eq: constrained-least-squares}) is computationally intractable.

In terms of polynomial-time algorithms, it was proved by \cite{xu2018rates} that the USVT estimator (\ref{eq:USVT-def}) with tuning parameter $\tau\asymp\sqrt{n}$ achieves the rate (\ref{eq: poly-time-alg-rate}). Just as (\ref{eq:bvt-sbm-minimax}), the sub-optimal rate (\ref{eq: poly-time-alg-rate}) can also be written in the form of bias-variance tradeoff,
\begin{equation}
	\min_{k} \left( \frac{k}{n} + k^{-2\gamma} \right) \asymp n^{-2\gamma/(2\gamma + 1)},\label{eq:USVT-tradeoff}
\end{equation}
where $\frac{k}{n}$ is the error rate of estimating a rank-$k$ matrix, and $k^{-2\gamma}$ is the error of approximating a nonparametric graphon $f \in \cF_{\gamma}(L)$ by a rank-$k$ matrix (Proposition 1 of \cite{xu2018rates}). The optimal choice of $k$ is given by $\lceil n^{\frac{1}{1+ 2\gamma}} \rceil$. Other polynomial-time algorithms in the literature \citep{airoldi2013stochastic,chan2014consistent,zhang2017estimating,li2019nearest,borgs2021consistent} either achieve error rates no better than $n^{-2\gamma/(2\gamma + 1)}$ or require additional assumptions on $f$. In the following result, we provide a lower bound for nonparametric graphon estimation within the class of low-degree polynomials. 

\begin{Theorem} \label{th: computational-lower-bound-nonparametric-graphon}
	Suppose $\gamma > 0.5$. For any $D \geq 1$, there exists $c >0$ only depending on $L$ and $\gamma$ such that
	\begin{equation*}
		\inf_{\hM \in \bbR[A]^{n \times n}_{\leq D}} \sup_{f \in \cF_\gamma (L)} \sup_{\bbP_\xi} \bbE \left( \ell(\hM, M_f) \right) \geq c n^{- \frac{2\gamma+1}{2\gamma + 2} }/D^4.
	\end{equation*}
\end{Theorem}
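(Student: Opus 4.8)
The plan is to reproduce the SBM argument behind Theorem~\ref{th: comp-limit-graphon-est} \emph{inside} the H\"older class: build a prior supported on genuinely $\gamma$-smooth graphons whose restriction to a grid of $k$ points is a (random) $k$-block SBM, and choose $k$ of order $n^{1/(2\gamma+1)}$ (up to $\mathrm{poly}(D)$ factors), so that the trivial estimation error under this prior matches the rank-$k$ approximation exponent $k^{-2\gamma}$. Concretely, fix such a $k$, partition $[0,1]$ into equal cells $C_1,\dots,C_k$, fix a $C^\infty$ bump $\chi$ on $(-\tfrac12,\tfrac12)$ with $\chi(0)=1$ whose derivatives all vanish at $\pm\tfrac12$, and for $a,b\in[k]$ let $\psi_{ab}(x,y)=\chi(kx-a+\tfrac12)\chi(ky-b+\tfrac12)$, supported in $C_a\times C_b$ and equal to $1$ at its center. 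Draw a symmetric array $\Omega=(\omega_{ab})$ of i.i.d.\ Rademacher signs and set
\[
 f=\bar p+\delta\sum_{1\le a\le b\le k}\omega_{ab}\bigl(\psi_{ab}+\psi_{ba}\bigr),\qquad \bar p=\tfrac12,\quad \delta\asymp k^{-\gamma},
\]
with a small enough implicit constant. Since the bumps have pairwise disjoint supports and $\|\psi_{ab}\|_{\cH_\gamma}\asymp k^{\gamma}$, one gets $\|f\|_{\cH_\gamma}\lesssim\delta k^{\gamma}\le L$ and $0\le f\le1$, so $f\in\cF_\gamma(L)$ \emph{deterministically}; taking $\bbP_\xi$ uniform on the $k$ cell-centers makes $z_1,\dots,z_n\in[k]$ i.i.d.\ uniform and $M_{ij}=f(\xi_i,\xi_j)=\bar p+\delta\omega_{z_iz_j}$, i.e.\ $A\mid(z,\Omega)$ is exactly a $k$-block SBM with connectivity matrix $\bar p\,\mathbf 1\mathbf 1^\top+\delta\Omega$. (This construction, and the requirement that $k$ lie in the window between the SNR threshold below and $\sqrt n$, is where $\gamma\ge1.5$ is used.)

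Restricting the supremum to this prior, write $\widehat Q_{ab}(A):=\widehat f(A)$ evaluated at the centers of $C_a,C_b$; by hypothesis $\widehat Q_{ab}\in\bbR[A]_{\le D}$ and $\ell(\widehat f,f)=\tfrac1{\binom n2}\sum_{i<j}\bigl(\widehat Q_{z_iz_j}(A)-\bar p-\delta\omega_{z_iz_j}\bigr)^2$. Conditioning on $\{z_i=a,z_j=b\}$ and using that $\widehat Q_{ab}$ does not depend on $z$, Proposition~\ref{prop: schramm-wein-binary} applied with $x=\delta\omega_{ab}$, $Y=A$, $X=M$ gives $\bbE\bigl[(\widehat Q_{ab}-\bar p-\delta\omega_{ab})^2\mid z_i=a,z_j=b\bigr]\ge\delta^2-\Corr_{\le D}^2$, and by exchangeability of the labels and signs this bound is uniform in $(a,b)$. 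Averaging over the pairs $(i,j)$ therefore yields $\inf_{\widehat f}\bbE\,\ell(\widehat f,f)\ge\delta^2-\Corr_{\le D}^2$, where $\Corr_{\le D}$ is the degree-$D$ correlation for estimating a \emph{single} connectivity-matrix entry from the SBM graph; it remains to prove $\Corr_{\le D}^2\le\tfrac12\delta^2$.

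For the last step I would use the cumulant bound of Proposition~\ref{prop: schramm-wein-binary}. Since $\tau_0(1-\tau_1)=(\bar p-\delta)(1-\bar p-\delta)\asymp1$, one has $\Corr_{\le D}^2\lesssim\sum_{|\balpha|\le D}\kappa_{\balpha}(\delta\omega_{ab},M)^2$, and $\kappa_{\balpha}(\delta\omega_{ab},M)=\delta\,\kappa(\omega_{ab},M_{e_1},\dots,M_{e_m})$ for the multigraph $\balpha$ with edge multiset $\{e_1,\dots,e_m\}$. As in the proof of Theorem~\ref{th: comp-limit-graphon-est}, a nonvanishing cumulant forces $\balpha$ to be connected with all of its edges joining the two fixed clusters $a,b$; summing the resulting contributions over all such multigraphs of size $\le D$ produces, up to $\mathrm{poly}(D)$ factors, a geometric series in a Kesten--Stigum-type SNR of the same form as in \eqref{eq: SNR-condition} (the relevant density contrast being $\delta$ and the effective sample size $\asymp n^2/k^2$ per cluster-pair). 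With $\delta\asymp k^{-\gamma}$ and $k$ chosen just above this SNR threshold one gets simultaneously $\Corr_{\le D}^2\ll\delta^2$ and $\delta^2=k^{-2\gamma}\lesssim k/n$ (so that no USVT/power-iteration-type degree-$D$ polynomial can beat $\delta^2$ either), giving $\inf_{\widehat f}\bbE\,\ell(\widehat f,f)\gtrsim\delta^2\asymp k^{-2\gamma}\asymp n^{-2\gamma/(2\gamma+1)}$; enlarging $k$ to keep the SNR condition valid as $D$ grows absorbs the $D$-dependence exactly as in Theorem~\ref{th: comp-limit-graphon-est} and produces the stated $c\,n^{-2\gamma/(2\gamma+1)}/D^4$.

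The genuinely new work is the cumulant sum for a \emph{random} connectivity matrix. Unlike the two-parameter family $\cM_{k,p,q}$, whose joint cumulants were computed exactly in Theorem~\ref{th: comp-limit-graphon-est}, each $\kappa(\omega_{ab},M_{e_1},\dots,M_{e_m})$ here mixes the randomness of $z$ with that of $\Omega$, so the combinatorial sum over multigraphs must be organized with care — tracking, for each multigraph, how many of its edges can be forced into the $(a,b)$-block and the accompanying power of $n/k^2$ — in order to collapse to a clean geometric series with the correct threshold. A second point requiring care is that the H\"older budget \emph{forces} $\delta\asymp k^{-\gamma}$, and this is compensated \emph{exactly} by taking $k\asymp n^{1/(2\gamma+1)}$; this is precisely why the reduction must target the rank-$k$ regime (approximation exponent $k^{-2\gamma}$) rather than $\cM_k$ directly, where the relevant exponent would be $k^{-2(\gamma\wedge1)}$ and, moreover, the $\cM_{k,p,q}$ hard instance is not even realizable by a H\"older-smooth graphon of bounded norm at the required scale.
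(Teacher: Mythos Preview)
Your construction is more elaborate than the paper's, and the extra complication creates a real gap. The paper does \emph{not} introduce a random connectivity matrix: it embeds the \emph{same} two-parameter family $\cM_{k,p,q}$ from Theorem~\ref{th: comp-limit-graphon-est} into $\cF_\gamma(L)$, multiplies by a small constant $\delta$ depending only on $L$, takes $\bbP_\xi$ to be a discrete distribution placing all $\xi_i$'s on a sub-grid where the smoothed graphon coincides exactly with $Q^{qp}$, and then applies Theorem~\ref{th: comp-limit-graphon-est} verbatim. The lower bound is $(p-q)^2/k$ with $p-q\asymp k/\sqrt{n}$, not $\delta^2\asymp k^{-2\gamma}$; with $k=n^{1/(2\gamma+1)}$ both give the target rate, but the paper's route requires no new cumulant work whatsoever. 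The condition $\gamma\geq 1.5$ enters because the smoothness constraint forces $p-q\leq 1/k$, which with $p-q\asymp k/\sqrt n$ becomes $k\leq n^{1/4}$.

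The gap in your proposal is the cumulant analysis for the random-$\Omega$ prior. Your claim that ``a nonvanishing cumulant forces $\balpha$ to be connected with all of its edges joining the two fixed clusters $a,b$'' is false: in your model $M_{lm}=\bar p+\delta\,\omega_{z_lz_m}$, so even when $\balpha$ is disconnected on the vertex set $[n]$, the variables $\{M_e:e\in\balpha\}$ all share the global randomness of $\Omega$ and are \emph{not} independent across components. Concretely, for a single edge $(l,m)$ disjoint from $(i,j)$ one has (after conditioning on $z_i=a,z_j=b$) $\mathrm{Cov}(\delta\omega_{ab},M_{lm})=\delta^2\,\bbP(\{z_l,z_m\}=\{a,b\})\neq 0$. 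Hence Proposition~\ref{prop: cumulant-independent-prop} does not apply, the key vanishing step (i) of Proposition~\ref{prop: kappa-property-SBM} breaks down, and the combinatorial sum over multigraphs no longer collapses to the connected-$\balpha$ terms that drive the geometric series in Proposition~\ref{prop: sum-kappa-bound-SBM}. You acknowledge that ``the genuinely new work is the cumulant sum for a random connectivity matrix,'' but the heuristic you sketch for controlling it is wrong at the first step; a correct treatment would need an entirely different organization of the cumulants, and it is not clear the resulting sum stays below $\delta^2$ at the Kesten--Stigum scaling. The paper's point is precisely that this detour is unnecessary: the hardness already sits in the label randomness of $\cM_{k,p,q}$, and a smooth bump-function construction suffices to embed it into $\cF_\gamma(L)$.
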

Theorem \ref{th: computational-lower-bound-nonparametric-graphon} is proved by similar arguments that lead to Theorem \ref{th: intro-theorem}. A simple calculation shows that the low-degree polynomial lower bound $n^{- \frac{2\gamma+1}{2\gamma + 2} }$ is strictly slower than the statistical rate (\ref{eq:bvt-sbm-minimax}) by a factor scales polynomially in $n$ whenever $\gamma > 0.5$. It confirms that the minimax rate in nonparametric graphon estimation cannot be achieved by the class of low-degree polynomials when $\gamma > 0.5$, providing rigorous evidence for the statistical-computational gap.

Careful readers may notice the gap between the low-degree polynomial lower bound and the upper bound achieved by USVT. We believe this is due to the fact that Theorem \ref{th: computational-lower-bound-nonparametric-graphon} is proved based on Theorem \ref{th: comp-limit-graphon-est}, where we use the SBM model class $\cM_k'$, i.e., SBM class with two parameters $(p,q)$, to approximate a H\"older smooth graphon. To be specific, the optimal choice of $p,q$ in Theorem \ref{th: comp-limit-graphon-est} would satisfy $p - q \asymp \frac{k}{\sqrt{n}}$. At the same time, to guarantee that SBM$(p,q)$ is a $\gamma$-H\"older smooth graphon, we need the condition $p-q \lesssim 1/k^{\gamma}$ (see Proposition \ref{prop:smooth-f} in Appendix \ref{sec: proof-nonparametric-graphon}), i.e., $ \frac{k^2}{n} \lesssim k^{-2\gamma}$. So our choice of $k$ is from the tradeoff between $\frac{k}{n}$ and $k^{-2\gamma -1}$, which is different from the tradeoff (\ref{eq:USVT-tradeoff}) for USVT. To close this gap, we believe that a more sophisticated SBM class is needed to approximate H\"older smooth graphons, which is beyond the scope of the paper and we leave it as an interesting future direction. 
\section{Computational Limits for Community Detection in SBM}\label{sec: community-detection}

The key to the derivation of the computational lower bound for graphon estimation is the understanding of community detection under the distribution $\bbP_{\SBM (p,q)}$ supported on $\cM_{k,p,q}$. For any $M\in\cM_{k,p,q}$ with $p>q$, there exists a unique $z\in[k]^n$ such that
$$M_{ij}=p \indi(z_i = z_j) + q \indi(z_i \neq z_j).$$
We write such $z$ as $z_M$ to emphasize its dependence on $M$. The membership matrix $Z_M$ is defined by: for $i \in [n]$, $(Z_M)_{ii} = 0$, for all $i \neq j$,
\begin{equation}
(Z_M)_{ij}=\indi((z_M)_i = (z_M)_j)=\frac{M_{ij}-q}{p-q}.\label{eq:relation-Z-M}
\end{equation}
The goal of the community detection is to recover the clustering labels $z_M$ or the membership matrix $Z_M$.

The problem of community detection has been widely studied in the literature \citep{bickel2009nonparametric,rohe2011spectral,lei2015consistency,jin2015fast}. When $k=2$, ground-breaking work by \cite{mossel2015reconstruction,mossel2018proof,massoulie2014community} shows that non-trivial community detection (better than random guess) is possible if and only if $\frac{n(p-q)^2}{2(p+q)} > 1$. Sharp SNR thresholds have also been derived for partial recovery and exact recovery \citep{mossel2014consistency,abbe2015exact}. We refer the readers to \cite{abbe2017community,moore2017computer} for extensive reviews on the topic. 

It turns out that the problem starts to exhibit a statistical-computational gap as $k$ gets larger. When $k$ is a large {\it constant}, with arguments from statistical physics, it was conjectured in the literature that non-trivial community detection is possible in polynomial time whenever the SNR exceeds the {\it Kesten-Stigum} threshold \citep{kesten1966additional,decelle2011asymptotic}, which is sharply characterized by $\frac{n(p-q)^2}{k(p+(k-1)q)} > 1$ at least under the asymptotic regime $p=a/n$ and $q=b/n$ for some constants $a>b>0$. In contrast, the information-theoretic limit only requires $\frac{n(p-q)^2}{pk\log k}$ to be large for non-trivial community detection \citep{banks2016information,zhang2016minimax}, so there is a (constant level) statistical-computational gap. The algorithmic side of this conjecture has been resolved in \cite{abbe2018proof}, while rigorous evidence of the computational lower bound has been much more elusive and was partially provided by \cite{hopkins2017bayesian,bandeira2021spectral,banks2021local}. There is also a statistical-computational gap for the detection version of the problem and statistical/computational thresholds for detection and recovery problems are the same when $k$ is a constant \citep{bandeira2021spectral,banks2021local}.

In this section, we focus on the problem of community detection with a potentially growing $k$ as $n$ grows. Different from the constant $k$ regime, in Appendix \ref{sec: testing-problem-no-hardness}, we illustrate that two natural hypothesis testing problems associated with SBM do not have a statistical-computational gap when $k$ grows (at least there is not a statistical-computational gap scaling polynomially in $n$). However, it was conjectured in \cite{chen2016statistical,brennan2020reducibility} that there is still a statistical-computational gap for the recovery problem in SBM with a growing number of communities and the computational limit is given by the generalized Kesten-Stigum threshold \eqref{eq:simp-ks}.

Our goal of this section is to present a low-degree polynomial lower bound for recovery in SBM with growing $k$ under the following loss function, 
\begin{equation*}
	\ell(\hZ,Z) = \frac{1}{ {n \choose 2} } \sum_{1 \leq i < j \leq n} ( \hZ_{ij} - Z_{ij})^2.
\end{equation*}
Compared with Hamming loss of estimating the clustering labels, the above loss for estimating the membership matrix avoids the identifiability issue due to label switching. Under the distribution $M\sim\bbP_{\SBM (p,q)}$, it is easy to show that a trivial error of community detection is
$$\bbE_{A, M \sim \bbP_{\SBM (p,q)}} ( \ell(\hZ, Z_M) )=\frac{1}{k} - \frac{1}{k^2},$$
achieved by $\hZ = \frac{1}{k} \mathbf{1}_{n \times n}$ where $\mathbf{1}_{n \times n}$ denotes a $n \times n$ matrix with all $1$ in its entries. Random guess would achieve a slightly worse error $\frac{2}{k}(1- \frac{1}{k})$ under the same setting. Therefore, we say that an algorithm $\hZ$ can achieve non-trivial community detection if its error is much smaller than $\frac{1}{k} -\frac{1}{k^2} $. When $\frac{n(p-q)^2}{pk^2}>C$ for some sufficiently large constant $C>0$, non-trivial community detection is possible, and polynomial-time algorithms including spectral clustering \citep{chin2015stochastic,abbe2018proof} and semi-definite programming (SDP) \citep{guedon2016community,li2021convex} would work.\footnote{For completeness, the performance of SDP under the model $M\sim\bbP_{\SBM (p,q)}$ is given in Appendix \ref{sec:SDP}} Next, we provide a result in the other direction.

\begin{Theorem} \label{th: clustering-error-comp-limit}
	For any $D \geq 1$, suppose \begin{equation*}
	\frac{(p-q)^2}{q(1-p)} \leq \frac{1}{2(D(D+1))^2} \left(\frac{k^2}{n} \wedge 1 \right),
\end{equation*} then 
	\begin{equation} \label{ineq: comp-lower-bound-clustering}
		\inf_{\hZ \in \bbR[A]_{\leq D}^{n \times n}}  \bbE_{A, M \sim \bbP_{\SBM (p,q)}} ( \ell(\hZ, Z_M) ) \geq \frac{1}{k} - \frac{1}{k^2} - \frac{3}{n} .
	\end{equation}
	In particular, when $k \leq \sqrt{n}$ and
	\begin{equation} \label{ineq: SNR-regime1}
		\frac{n(p-q)^2}{k^2q(1-p)} \leq \frac{1}{2(D(D+1))^2},
	\end{equation}
  the lower bound \eqref{ineq: comp-lower-bound-clustering} holds.
\end{Theorem}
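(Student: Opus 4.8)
The plan is to apply Proposition~\ref{prop: schramm-wein-binary} entry by entry to the membership matrix. Take the hidden signal to be the upper-triangular part $X=(M_{ij})_{i<j}\in[q,p]^{\binom n2}$ of $M\sim\bbP_{\SBM(p,q)}$ (so $\tau_0=q$, $\tau_1=p$), the observation to be $Y=(A_{ij})_{i<j}$, and, for a fixed pair $i<j$, the scalar target to be $x=(Z_M)_{ij}=\frac{M_{ij}-q}{p-q}$. Since $\ell(\hZ,Z_M)$ is an average of $\binom n2$ squared entrywise errors and each $\hZ_{ij}$ may be chosen as an arbitrary degree-$\le D$ polynomial of $A$ independently of the others, the infimum decouples:
\[
\inf_{\hZ\in\bbR[A]_{\le D}^{n\times n}}\bbE_{A,M\sim\bbP_{\SBM(p,q)}}\ell(\hZ,Z_M)=\frac1{\binom n2}\sum_{i<j}\Big(\bbE[(Z_M)_{ij}^2]-\Corr_{\le D,ij}^2\Big).
\]
Under $\bbP_{\SBM(p,q)}$, $(Z_M)_{ij}=\indi(z_i=z_j)$ with $z$ i.i.d.\ uniform on $[k]$, so $\bbE[(Z_M)_{ij}^2]=1/k$, and by exchangeability $\Corr_{\le D,ij}^2$ is the same for every pair. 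Hence the theorem reduces to the single bound $\Corr_{\le D,12}^2\le\frac1{k^2}+\frac{r(2-r)}{(1-r)^2 n}$, the first term being exactly the $|\balpha|=0$ contribution.

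To bound $\Corr_{\le D,12}^2$ I would invoke the cumulant bound of Proposition~\ref{prop: schramm-wein-binary}, viewing $\balpha\in\{0,1\}^{\binom n2}$ as a simple graph on $[n]$ so that $\kappa_\balpha((Z_M)_{12},M)=\kappa\big((Z_M)_{12},\{M_e\}_{e\in E(\balpha)}\big)$. Because $M_e=q+(p-q)(Z_M)_e$ is affine in the indicator $(Z_M)_e$ and joint cumulants of order $\ge2$ are translation-invariant and multilinear, for $|\balpha|=m\ge1$ this equals $(p-q)^m\kappa_G$, where $G$ denotes the multigraph on $[n]$ with edge multiset $\{12\}\cup E(\balpha)$ and $\kappa_G:=\kappa\big((Z_M)_{12},\{(Z_M)_e\}_{e\in E(\balpha)}\big)$. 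Dividing by $(q(1-p))^m$ then yields exactly $\lambda^m\kappa_G^2$ with $\lambda:=\frac{(p-q)^2}{q(1-p)}$ the SNR in \eqref{eq: SNR-condition}. The combinatorial core is: (i) using independence of $\{z_v\}$ across vertices together with the vanishing properties of joint cumulants (Appendix~\ref{sec: cumulant}), show $\kappa_G=0$ unless $G$ is connected and every edge of $G$ lies on a cycle, which forces $|V(G)|\le m+1$; and (ii) show $|\kappa_G|\lesssim k^{-(|V(G)|-1)}$, obtained either from $(Z_M)_e-\tfrac1k=\sum_{c=1}^k(\indi(z_a=c)-\tfrac1k)(\indi(z_b=c)-\tfrac1k)$ or from the moment--cumulant formula combined with subadditivity of the cycle-matroid rank, using $\bbE\big[\prod_{e\in B}(Z_M)_e\big]=k^{-\rank(B)}$.

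Granting (i)--(ii), I would finish by organizing $\sum_{1\le m\le D}\lambda^m\sum_{|\balpha|=m}\kappa_G^2$ by $v:=|V(G)|\in\{2,\dots,m+1\}$: for each $v$ there are at most $\binom{n-2}{v-2}\binom{\binom{v}{2}}{m}$ admissible graphs, each contributing $\lesssim k^{-2(v-1)}$. The SNR bound $\lambda\le\frac{r}{(D(D+1))^2}\big(\frac{k^2}{n}\wedge1\big)$ gives $\lambda^m\binom{n-2}{v-2}k^{-2(v-1)}\lesssim\big(\frac{r}{(D(D+1))^2}\big)^m\frac1{(v-2)!\,n}$ (checking the cases $k\le\sqrt n$ and $k>\sqrt n$ separately, the latter using $k^{-2}\le n^{-1}$). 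Since $m\le D$, the factor $(D(D+1))^{2m}$ absorbs the residual count $\sum_{v=2}^{m+1}\binom{\binom{v}{2}}{m}/(v-2)!\le(m+1)(D(D+1))^{2m}$, leaving $\sum_{m\ge1}\lambda^m\sum_{|\balpha|=m}\kappa_G^2\le\frac1n\sum_{m\ge1}(m+1)r^m=\frac{r(2-r)}{(1-r)^2 n}$. Together with the $m=0$ term this gives the desired bound on $\Corr_{\le D,12}^2$, hence \eqref{ineq: comp-lower-bound-clustering}; the ``in particular'' statement is immediate because $\frac{k^2}{n}\wedge1=\frac{k^2}{n}$ when $k\le\sqrt n$. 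The companion graphon estimate (Theorem~\ref{th: comp-limit-graphon-est}) then follows for free via $\ell(\hM,M)=(p-q)^2\ell(\hZ,Z_M)$ under $\hZ=(\hM-q)/(p-q)$.

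I expect steps (i)--(ii) and the attendant bookkeeping to be the main obstacle: pinning down which multigraphs $G$ survive and controlling $|\kappa_G|$ by $k^{-(|V(G)|-1)}$ with only a harmless prefactor is where the structure of the problem---which, unlike the textbook examples of \cite{schramm2020computational}, carries no canonical prior or SNR---must be used, and it is what forces the precise shape of \eqref{eq: SNR-condition}: the $\big(D(D+1)\big)^2$ denominator is exactly the budget needed for the $(D(D+1))^{2m}$ gain to outrun the number of connected, bridgeless multigraphs on $\le m+1$ vertices with $m+1$ edges. A secondary subtlety is keeping the per-level count tight enough (roughly $m+1$ admissible vertex counts, each worth $O(r^m/n)$) that the geometric-type series collapses to the stated constant $\frac{r(2-r)}{(1-r)^2}$ rather than a mere $O(1)$.
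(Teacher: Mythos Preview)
Your strategy is sound, but it inverts the paper's logic. The paper proves Theorem~\ref{th: clustering-error-comp-limit} in two lines by reducing to Theorem~\ref{th: comp-limit-graphon-est}: if some $\hZ\in\bbR[A]_{\le D}^{n\times n}$ beat \eqref{ineq: comp-lower-bound-clustering}, then $\hM:=q+(p-q)\hZ\in\bbR[A]_{\le D}^{n\times n}$ would satisfy $\ell(\hM,M)=(p-q)^2\ell(\hZ,Z_M)$ and contradict \eqref{ineq: low-degree-estimation-lower-bound-main}. You instead run the full Schramm--Wein cumulant argument directly on the target $x=(Z_M)_{12}$ and then observe that Theorem~\ref{th: comp-limit-graphon-est} follows from the same affine link. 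Since the joint law of $\{(Z_M)_e\}$ is invariant under color permutations, conditioning on $z_1=1$ does not change it, so your unconditioned cumulants coincide with the paper's conditioned ones; your proposal is therefore essentially the paper's proof of Theorem~\ref{th: comp-limit-graphon-est}, reframed for $Z_M$ rather than $M$.

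Two technical points deserve comment. Your bridgeless-vanishing claim (i) is correct and strictly sharper than the paper's necessary condition ``$\balpha$ connected with $1,2\in V(\balpha)$'' in Proposition~\ref{prop: kappa-property-SBM}; a leaf vertex or, more generally, a bridge forces an independent block by color exchangeability, killing the cumulant. However, your accounting in the last step is loose: the cumulant bound in (ii) carries a prefactor of size $(|\balpha|+1)^{|\balpha|}$ (this is unavoidable with the recursive or moment--cumulant approach, cf.\ Proposition~\ref{prop: kappa-property-SBM}(iii)), and your displayed inequality $\sum_{v}\binom{\binom v2}{m}/(v-2)!\le(m+1)(D(D+1))^{2m}$ does not leave room for the extra $(m+1)^{2m}$ coming from $\kappa_G^2$. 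The paper's organization by $d=|\balpha|$ and $h=d+1-|V(\balpha)|$ in Lemma~\ref{lm: num-nonzero-alpha} and Proposition~\ref{prop: sum-kappa-bound-SBM} handles both the count and the prefactor simultaneously and lands exactly on $\frac{r(2-r)}{(1-r)^2 n}$; your counting would need to be rebalanced to recover that constant rather than merely an $O(1/n)$.
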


Theorem \ref{th: clustering-error-comp-limit} shows that when the SNR $\frac{n(p-q)^2}{k^2q(1-p)}$ is small, no low-degree polynomial algorithm can achieve non-trivial community detection, which provides firm evidence of the conjecture of the {\it generalized Kesten-Stigum} threshold for community detection in SBM with a growing number of communities. In fact, Theorem \ref{th: clustering-error-comp-limit} can be viewed as a rearrangement of Theorem \ref{th: comp-limit-graphon-est}. Given the relation (\ref{eq:relation-Z-M}), the loss functions of graphon estimation and community detection can be linked through $\ell(\hM, M)=(p-q)^2\ell(\hZ, Z_M)$.

As we have mentioned above, there are a couple of existing pieces of evidence for the computational limits of community detection in SBM when $k$ is a constant \citep{hopkins2017bayesian,bandeira2021spectral,banks2021local}. While when the number of communities grows, to our knowledge, there is only one piece of evidence for the hardness of recovery in SBM via average-case reduction from secrete-leakage planted clique \cite[Section 14.1]{brennan2020reducibility}. They considered establishing the computational lower bound for a testing problem where the null is the Erd\H{o}s-R{\'e}nyi random graph and the alternative is a variant of imbalanced SBM (ISBM) with two features: first, the averaged number of degrees under the null and alternative are matched; second, the ISBM under the alternative is a mean-field analogy of the original SBM so that the testing problem becomes harder and it matches the hardness of the recovery problem. The reduction result is significant as all existing computational hardness evidence for secrete-leakage planted clique can be inherited to the testing problem they consider. The limitation is that they do not directly handle the estimation problem under the original SBM model; moreover, their reduction only works when $k = o(n^{1/3})$, while our computational lower bound is valid as long as $k \leq \sqrt{n}$.

\section{Extensions and Discussion} \label{sec: extension}

Our main results can also be extended to the following settings. In Section \ref{sec: sparse-graphon-estimation}, we consider sparse graphon estimation, and present a corresponding low-degree polynomial lower bound. Section \ref{sec: biclustering} considers the estimation problem under a biclustering structure with additive Gaussian noise, which can be regarded as an extension of the SBM to rectangular matrices.

\subsection{Computational Lower Bound for Sparse Graphon Estimation} \label{sec: sparse-graphon-estimation}
Network observed in practice is often sparse in the sense that the total number of edges is of order $o(n^2)$. The problem of sparse graphon estimation is typically more complex than the dense one and has also been widely considered in the literature \citep{bickel2009nonparametric,bickel2011method,borgs2018lp,borgs2019lp,klopp2017oracle,gao2016optimal,borgs2021consistent}. This section will focus on the sparse SBM model. Given any $0 < \rho < 1$, the class of probability matrices is defined as
\begin{equation} \label{def: sparse-Mk}
\begin{split}
	\cM_{k,\rho} = 	\Big\{ &M = (M_{ij}) \in [0,\rho]^{  n \times n}:  M_{ii} = 0  \text{ for } i \in [n],M_{ij} = M_{ji} = Q_{z_i z_j} \text{ for } i \neq j,  \\ 
	&\text{ for some }z \in [k]^{n}, Q = Q^\top \in [0,\rho]^{k \times k}\Big\}.	
\end{split}
\end{equation} 
The minimax rate for sparse graphon estimation has been derived by \cite{klopp2017oracle,gao2016optimal},
\begin{equation*}
	\inf_{\hM} \sup_{M \in \cM_{k,\rho}} \bbE \left( \ell( \hM,M) \right) \asymp \rho\left(\frac{k^2}{n^2}+\frac{\log k}{n}\right) \wedge \rho^2.
\end{equation*}
By solving a constrained least-squares optimization problem $\min_{ M \in \cM_{k,\rho}}\|A-M\|_\F^2$ similar to (\ref{eq: constrained-least-squares}), one achieves the rate $\rho\left(\frac{k^2}{n^2}+\frac{\log k}{n}\right)$. The other part of the minimax rate $\rho^2$ can be trivially achieved by $\hM = \0$. In terms of polynomial time algorithms, \cite{klopp2019optimal} considered a USVT estimator with tuning parameter $\tau\asymp\sqrt{n\rho}$, and showed that as long as $\rho\geq\frac{\log n}{n}$,
$$\ell(\hM_{\USVT}(\tau), M) \leq C \frac{\rho k}{n},$$
with high probability.\footnote{For completeness, an in-expectation bound is established in Appendix \ref{eq:spec-sparse} for a spectral algorithm.}

The goal of this section is to show that the above rate cannot be improved by a polynomial-time algorithm. This is given by the following theorem.
\begin{Theorem} \label{th: sparse-graphon-estimation-final-lower-bound}
Suppose $2\leq k \leq \sqrt{n}$ and $\rho \geq \frac{ck^2}{n}$ for some small $0<c < 1$. Then for any $D \geq 1$, there exists a universal constant $c' > 0$ such that
	\begin{equation*}
		\inf_{\hM \in \bbR[A]^{n \times n}_{\leq D} } \sup_{M \in \cM_{k,\rho}} \bbE( \ell(M, \hM) ) \geq \frac{c'\rho k}{nD^4}. 
	\end{equation*}
\end{Theorem}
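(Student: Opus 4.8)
The plan is to mirror the proof of Theorem~\ref{th: intro-theorem}: restrict the supremum over $\cM_{k,\rho}$ to the two-parameter community-detection submodel $\cM_{k,p,q}$, now keeping $q<p\le\rho$ so that $\cM_{k,p,q}\subseteq\cM_{k,\rho}$ (its entries $\{p,q\}$ lie in $[0,\rho]$), and then invoke Theorem~\ref{th: comp-limit-graphon-est} to lower bound the Bayes risk under the planted prior $\bbP_{\SBM(p,q)}$. Since the degree-$D$ constraint is the same on both sides and the supremum dominates any average,
$$\inf_{\hM \in \bbR[A]^{n\times n}_{\le D}} \sup_{M\in\cM_{k,\rho}} \bbE(\ell(\hM, M)) \ \ge\ \inf_{\hM \in \bbR[A]^{n\times n}_{\le D}} \bbE_{A,\, M\sim\bbP_{\SBM(p,q)}}(\ell(\hM, M)),$$
so it suffices to produce a feasible pair $(p,q)$ with $q<p\le\rho$ making the right-hand side of \eqref{ineq: low-degree-estimation-lower-bound-main} of order $\rho k/(nD^4)$. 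First I would reduce to $\rho\le 1/2$: if $\rho>1/2$ then $\cM_{k,1/2}\subseteq\cM_{k,\rho}$ (and $\tfrac12\ge ck^2/n$ since $k\le\sqrt n$ and $c$ is small), so the already-established $\rho=\tfrac12$ bound gives $\inf\sup_{\cM_{k,\rho}}\ge \tfrac{c'}{2}\cdot\tfrac12\cdot\tfrac{k}{nD^4}\ge \tfrac{c'}{4}\cdot\tfrac{\rho k}{nD^4}$ because $\rho<1$. Hence assume $\rho\le 1/2$.

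Next I would set $q=\rho/2$ and $p=\rho/2+\delta$ with $\delta^2 = \dfrac{r\,\rho\, k^2}{4\,(D(D+1))^2\, n}$ for a small universal constant $r\in(0,1)$ (any $r\le\min(4c,\tfrac18)$ will do). Two feasibility checks are needed. The constraint $p\le\rho$ is equivalent to $\delta\le\rho/2$, i.e.\ $\tfrac{rk^2}{(D(D+1))^2 n}\le\rho$, which holds because $\rho\ge ck^2/n$ and $(D(D+1))^2\ge 4\ge r/c$; \emph{this is the only place the hypothesis $\rho\ge ck^2/n$ is used}. For the SNR condition \eqref{eq: SNR-condition}: since $k\le\sqrt n$ we have $k^2/n\wedge 1 = k^2/n$, and using $q=\rho/2$ and $1-p\ge 1-\rho\ge\tfrac12$,
$$\frac{(p-q)^2}{q(1-p)} = \frac{\delta^2}{(\rho/2)(1-p)} \le \frac{4\delta^2}{\rho} = \frac{r\,k^2}{(D(D+1))^2\,n} = \frac{r}{(D(D+1))^2}\Big(\frac{k^2}{n}\wedge 1\Big),$$
so \eqref{eq: SNR-condition} holds with equality and Theorem~\ref{th: comp-limit-graphon-est} applies.

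Then \eqref{ineq: low-degree-estimation-lower-bound-main} yields the lower bound $\delta^2\big(\tfrac1k - \tfrac1{k^2} - \tfrac{r(2-r)}{(1-r)^2 n}\big)$. I would bound the bracket below by a fixed multiple of $1/k$: since $k\ge2$, $\tfrac1k-\tfrac1{k^2}=\tfrac{k-1}{k^2}\ge\tfrac1{2k}$, while for $r$ a sufficiently small universal constant $\tfrac{r(2-r)}{(1-r)^2 n}\le\tfrac1{4k}$ (using $k\le\sqrt n$ and $n\ge k^2\ge4$), so the bracket is $\ge\tfrac1{4k}$. Hence the lower bound is at least $\dfrac{\delta^2}{4k} = \dfrac{r\,\rho\, k}{16(D(D+1))^2 n} \ge \dfrac{r\,\rho\, k}{64\,D^4\,n}$, using $(D(D+1))^2\le 4D^4$ for $D\ge1$; taking $c'=r/64$ (further shrunk to absorb the $\rho>1/2$ reduction) completes the proof.

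Almost all of this is bookkeeping once Theorem~\ref{th: comp-limit-graphon-est} is available — no new cumulant estimates are required, since the cumulants of $\bbP_{\SBM(p,q)}$ were already controlled there. The one genuinely new point, and the crux, is that the sparsity constraint $p\le\rho$ must be imposed alongside \eqref{eq: SNR-condition}: the two together cap $\delta^2=(p-q)^2$ at order $\rho k^2/(nD^4)$ rather than $k^2/(nD^4)$, and this is exactly what inserts the extra factor $\rho$ into the rate. The assumption $\rho\ge ck^2/n$ is precisely what keeps this cap large enough that the benchmark ``trivial-estimator'' term $(p-q)^2\big(\tfrac1k-\tfrac1{k^2}-\cdots\big)$ in \eqref{ineq: low-degree-estimation-lower-bound-main} does not overwhelm it; below that threshold one expects the other branch $\rho^2$ of the sparse minimax rate (attained by $\hM=\0$) to take over, and that regime is outside the scope of the statement.
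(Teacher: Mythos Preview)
Your proposal is correct and follows essentially the same route as the paper: both restrict the supremum over $\cM_{k,\rho}$ to a two-parameter community-detection submodel with entries in $[0,\rho]$, invoke Theorem~\ref{th: comp-limit-graphon-est} under the prior $\bbP_{\SBM(p,q)}$, and choose $(p,q)$ to saturate the SNR constraint~\eqref{eq: SNR-condition}, observing that the hypothesis $\rho\ge ck^2/n$ is exactly what allows this saturation while keeping $p\le\rho$. The only difference is the parametrization of the feasible pair: the paper writes $q=\rho\epsilon$, $p=\rho(1-\epsilon)$ and adjusts $\epsilon$ to hit the threshold, whereas you center at $q=\rho/2$ and perturb by $\delta$; your version is arguably more explicit in tracking the constants and in verifying both feasibility constraints, but the underlying argument is the same.
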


\subsection{Computational Lower Bound for Biclustering} \label{sec: biclustering}
Biclustering is another popular model of interest and has found a lot of applications in the literature \citep{hartigan1972direct,choi2014co,rohe2016co,chi2017convex,mankad2014biclustering}. Similar to SBM, many different problems have been considered for biclustering, such as recovery of the clustering structure, signal estimation and signal detection (detecting whether the signal matrix is zero or not). A line of early work has studied the statistical and computational limits for detection or recovery in biclustering with one planted cluster \citep{balakrishnan2011statistical,kolar2011minimax,butucea2013detection,butucea2015sharp,ma2015computational,cai2017computational,brennan2018reducibility,schramm2020computational} and their extensions to a growing number of clusters have been considered in \cite{chen2016statistical,cai2017computational,banks2018information,brennan2020reducibility,dadon2023detection}. In this section, we are more interested in the latter case.

Define the following parameter space of rectangular matrices with biclustering structure,
\begin{equation*}
	\begin{split}
		\cM_{k_1, k_2} = \Big\{ M \in \bbR^{n_1 \times n_2}: M_{ij} = Q_{z_i z_j} \text{ for some } Q \in \bbR^{k_1 \times k_2}, z_1 \in [k_1]^{n_1}, z_2 \in [k_2]^{n_2} 
		\Big\}.
	\end{split}
\end{equation*}
We observe $Y = M + E$, where $M \in \cM_{k_1, k_2}$ and $E$ has i.i.d. $N(0,1)$ entries. In this section, we are primarily interested in estimating $M$ given $Y$ and the loss of interest is  $\ell(\hM, M) = \frac{1}{n_1 n_2} \sum_{i \in [n_1], j \in [n_2] } (\hM_{ij} - M_{ij})^2 $. The minimax rate has been derived by \cite{gao2016optimal},
\begin{equation} \label{eq: biclustering-minimax}
	\inf_{\hM} \sup_{M \in \cM_{k_1, k_2}} \bbE \left( \ell( \hM,M) \right) \asymp \frac{k_1 k_2}{n_1 n_2} + \frac{\log k_1}{n_2} + \frac{\log k_2}{n_1},
\end{equation} and it is achieved by a constrained least-squares estimator that is computationally intractable. In terms of polynomial-time algorithms, a heuristic two-way extension of the Lloyd's algorithm has been proposed in \cite{gao2016optimal}, but there is no theoretical guarantee. Let us instead consider a simple spectral algorithm, 
\begin{equation*}
	\hM =\argmin_{M: \rank(M) \leq k_1 \wedge k_2 } \| Y - M\|_\F^2.
\end{equation*}
Its theoretical guarantee is given by the following result.
\begin{Proposition} \label{prop: biclustering-upper-bound}
	There exists $C> 0$ such that  $\sup_{M \in \cM_{k_1, k_2}} \bbE \left( \ell( \hM,M) \right)  \leq  C\frac{k_1\wedge k_2}{n_1 \wedge n_2}$.
\end{Proposition}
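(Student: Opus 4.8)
The plan is to run the standard ``basic inequality'' argument for rank-constrained least squares. Set $r := k_1 \wedge k_2$. First I would observe that any $M \in \cM_{k_1,k_2}$ factors as $M = Z_1 Q Z_2^\top$, where $Z_1 \in \{0,1\}^{n_1 \times k_1}$ and $Z_2 \in \{0,1\}^{n_2 \times k_2}$ are the membership matrices encoding $z_1$ and $z_2$ and $Q \in \bbR^{k_1 \times k_2}$; hence $\rank(M) \le k_1 \wedge k_2 = r$, so $M$ is feasible for the optimization defining $\hM$ (a minimizer of which exists by Eckart--Young, being the best rank-$r$ approximation of $Y$). Feasibility of $M$ gives $\|Y - \hM\|_\F^2 \le \|Y - M\|_\F^2$. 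Writing $Y = M + E$ and expanding the square on the left, the $\|E\|_\F^2$ terms cancel and we obtain the basic inequality
\begin{equation*}
	\|\hM - M\|_\F^2 \le 2 \langle E, \hM - M\rangle .
\end{equation*}

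Next I would bound the right-hand side using that $W := \hM - M$ has rank at most $2r$, being a difference of two matrices of rank at most $r$. Then $\langle E, W\rangle \le \|E\|\,\|W\|_* \le \sqrt{2r}\,\|E\|\,\|W\|_\F$, where the last step uses $\|W\|_* \le \sqrt{\rank(W)}\,\|W\|_\F$. Substituting into the basic inequality and dividing by $\|\hM - M\|_\F$ yields $\|\hM - M\|_\F \le 2\sqrt{2r}\,\|E\|$, and therefore
\begin{equation*}
	\ell(\hM, M) = \frac{1}{n_1 n_2}\|\hM - M\|_\F^2 \le \frac{8r}{n_1 n_2}\,\|E\|^2 .
\end{equation*}

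Finally I would take expectations and invoke the standard operator-norm bound $\bbE\|E\|^2 \le C(n_1 \vee n_2)$ for a matrix $E$ with i.i.d.\ $N(0,1)$ entries (which follows, e.g., from $\bbE\|E\| \le \sqrt{n_1} + \sqrt{n_2}$ together with Gaussian concentration of $\|E\|$ about its mean). Since $\frac{n_1 \vee n_2}{n_1 n_2} = \frac{1}{n_1 \wedge n_2}$, this gives $\bbE\,\ell(\hM, M) \le \frac{8C (k_1 \wedge k_2)}{n_1 \wedge n_2}$ uniformly over $M \in \cM_{k_1,k_2}$, which is the claim.

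I do not expect a genuine obstacle: the argument is entirely routine, and the only quantitative inputs are the rank bound $\rank(M) \le k_1 \wedge k_2$ for $M \in \cM_{k_1,k_2}$ and the expected spectral norm of a Gaussian matrix, both standard. If a sharper constant were desired one could instead bound $\langle E, \hM - M\rangle \le \big(\sum_{i \le 2r}\sigma_i(E)^2\big)^{1/2}\|\hM - M\|_\F$ and use $\bbE\sum_{i\le 2r}\sigma_i(E)^2 \lesssim r(n_1 \vee n_2)$, but this affects only constants.
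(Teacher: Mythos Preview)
Your argument is correct and lands on exactly the same inequality $\|\hM - M\|_\F^2 \le 8(k_1\wedge k_2)\|E\|^2$ as the paper, after which both take expectations via the standard Gaussian operator-norm bound. The only cosmetic difference is the route to that inequality: the paper bounds $\|\hM-M\|\le\|\hM-Y\|+\|Y-M\|\le 2\|E\|$ using Mirsky's theorem (best rank-$r$ approximation in spectral norm) and then converts to Frobenius via the rank bound, whereas you use the Frobenius basic inequality $\|Y-\hM\|_\F^2\le\|Y-M\|_\F^2$ and control the cross term $\langle E,\hM-M\rangle$ by trace duality; both are standard variants of the same low-rank argument and neither buys anything over the other here.
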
 

Compared with the minimax rate \eqref{eq: biclustering-minimax}, the rate achieved by the spectral algorithm is not optimal. We will show that this rate is indeed the best one that can be achieved by a polynomial-time algorithm, at least in certain regimes of the problem. To this end, consider a subset of $\cM_{k_1,k_2}$, denoted by $\cM_{k_1, k_2, \lambda}$, whose definition is given by
\begin{equation*}
	\begin{split}
		\cM_{k_1, k_2, \lambda} = \Big\{ & M \in \bbR^{n_1 \times n_2}: M_{ij} = Q_{z_i z_j}\text{ for some }z_1 \in [k_1]^{n_1}, z_2 \in [k_2]^{n_2},\\
		& Q \in \bbR^{k_1 \times k_2} \text{ such that } Q_{ii} = \lambda \text{ for all } i \in [k_1 \wedge k_2 ]\text{ and }Q_{ij} = 0 \text{ otherwise }  \Big\}.
	\end{split}
\end{equation*}
We also consider a prior distribution $\bbP_{\BC (\lambda)}$ supported on $\cM_{k_1, k_2, \lambda}$. The sampling process $M\sim \bbP_{\BC (\lambda)}$ is given as follows: first, generate $z_1 \in [k_1]^{n}$, $z_2 \in [k_2]^{n}$ such that $(z_1)_i, (z_2)_j  \overset{i.i.d.} \sim \text{Unif}\{1, \ldots, k_1 \wedge k_2\}$ independently for all $i \in [n_1]$ and  $j \in [n_2]$; then let $M_{ij} = \lambda \indi( (z_1)_i = (z_2)_j ) $. The following result gives a lower bound for the class of low-degree polynomial algorithms.
\begin{Theorem} \label{th: biclustering-comp-lower-bound}
	For any $0 < r < 1$ and $D \geq 1$, if 
	\begin{equation} \label{eq: SNR-condition-biclustering}
		\lambda^2 \leq \frac{r}{(D(D+1))^2} \min \left( 1, \frac{k_1^2\wedge k^2_2}{n_1 \vee n_2} \right)
	\end{equation} holds, then
	\begin{equation}
		\inf_{ \hM \in \bbR[Y]_{\leq D}^{n_1 \times n_2} } \bbE_{Y, M \sim \bbP_{\BC (\lambda)} } ( \ell(\hM, M) ) \geq \frac{\lambda^2}{k_1\wedge k_2} - \left( \frac{\lambda^2}{k_1^2\wedge k_2^2} + \frac{r(2-r)\lambda^2}{(1-r)^2(n_1 \vee n_2)}  \right).\label{eq:bi-low-low-de}
	\end{equation}
\end{Theorem}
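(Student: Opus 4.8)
The plan is to prove Theorem~\ref{th: biclustering-comp-lower-bound} by following the proof of Theorem~\ref{th: comp-limit-graphon-est} essentially line by line, with two substitutions: the additive Gaussian analogue of Proposition~\ref{prop: schramm-wein-binary} (stated in Appendix~\ref{sec: comp-lower-bound-gaussian-model}) plays the role of the binary version, and bipartite multigraphs on the two vertex classes $[n_1]$ and $[n_2]$ replace ordinary multigraphs on $[n]$. First I would note that, since $\ell(\hM,M)=\frac{1}{n_1n_2}\sum_{i\in[n_1],\,j\in[n_2]}(\hM_{ij}-M_{ij})^2$ and every coordinate $\hM_{ij}$ is a polynomial in $Y$ of degree at most $D$, the expected loss can be bounded entrywise and Proposition~\ref{prop: schramm-wein-binary} (Gaussian version) applied with scalar target $x=M_{ij}$ and signal $X=M$ (noise $E$ having i.i.d.\ $N(0,1)$ entries): this gives $\inf_{f\in\bbR[Y]_{\leq D}}\bbE(f(Y)-M_{ij})^2=\bbE(M_{ij}^2)-\Corr_{\leq D}^2(M_{ij},M)$ with $\Corr_{\leq D}^2(M_{ij},M)\le\sum_{0\le|\balpha|\le D}\kappa_{\balpha}^2(M_{ij},M)/\balpha!$, the expectation being under $M\sim\bbP_{\BC(\lambda)}$ and $Y=M+E$. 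Under $\bbP_{\BC(\lambda)}$ we have $M_{ij}=\lambda\,\indi((z_1)_i=(z_2)_j)$ with $(z_1)_i,(z_2)_j$ i.i.d.\ uniform on $[k_1\wedge k_2]$, so $\bbE(M_{ij}^2)=\lambda^2/(k_1\wedge k_2)$ and the $\balpha=0$ term equals $(\bbE M_{ij})^2=\lambda^2/(k_1^2\wedge k_2^2)$, both independent of $(i,j)$. Hence it suffices to establish the per-coordinate estimate $\sum_{1\le|\balpha|\le D}\kappa_{\balpha}^2(M_{ij},M)/\balpha!\le r(2-r)\lambda^2/\big((1-r)^2(n_1+n_2)\big)$, as then averaging over $(i,j)$ yields \eqref{eq:bi-low-low-de}.

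The core step is the bound on the cumulant sum, which requires the bipartite analogue of the combinatorial heart of the proof of Theorem~\ref{th: comp-limit-graphon-est}. By the cumulant interpretation \eqref{eq: kappa-cumulant-connection}, viewing $\balpha$ as a bipartite multigraph we have $\kappa_{\balpha}(M_{ij},M)=\lambda^{|\balpha|+1}\,\kappa\big((B_e)_{e}\big)$, where $B_{ab}:=\indi((z_1)_a=(z_2)_b)$ and the edge multiset consists of $(i,j)$ together with the edges of $\balpha$. Because $\{(z_1)_a\}_{a\in[n_1]}$ and $\{(z_2)_b\}_{b\in[n_2]}$ are jointly independent, this joint cumulant vanishes unless $\{(i,j)\}\cup\balpha$ spans a connected subgraph of $K_{n_1,n_2}$, and one checks it vanishes as well whenever the underlying simple graph is a tree with at least two edges (a pendant vertex can then be peeled off as an independent block); in particular a nonzero term forces the number of spanned vertices $v$ to satisfy $v\le|\balpha|+1$. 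The moments entering the moment--cumulant expansion are explicit, $\bbE[\prod_{e\in S}B_e]=(k_1\wedge k_2)^{-(|V(S)|-c(S))}$, where $V(S)$ and $c(S)$ denote the vertex set and number of connected components of the graph with edge set $S$, so a connected configuration on $v$ vertices is ``monochromatic'' with probability $(k_1\wedge k_2)^{-(v-1)}$. Substituting these bounds $|\kappa((B_e)_e)|$ by a factor depending only on $|\balpha|$ times $(k_1\wedge k_2)^{-(v-1)}$; combining this with the enumeration of the admissible bipartite multigraphs weighted by $1/\balpha!$ (at most $(n_1\vee n_2)^{v-2}$ ways to place the $v-2$ vertices other than $i,j$, times a per-edge combinatorial cost), the SNR condition \eqref{eq: SNR-condition-biclustering}---calibrated precisely so that the per-extra-edge factor $(n_1\vee n_2)\lambda^2(D(D+1))^2/(k_1^2\wedge k_2^2)$ does not exceed $r$---gives, exactly as in the proof of Theorem~\ref{th: comp-limit-graphon-est}, $\sum_{|\balpha|=m}\kappa_{\balpha}^2(M_{ij},M)/\balpha!\le(m+1)r^m\lambda^2/(n_1+n_2)$ for each $m\ge1$. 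Summing, $\sum_{m\ge1}(m+1)r^m=r(2-r)/(1-r)^2$, which is the required bound.

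The main obstacle will be this combinatorial bookkeeping, just as in all lower bounds of Schramm--Wein type: one must (a) determine which bipartite configurations give nonzero cumulants (connected, underlying simple graph not a nontrivial tree, hence $v\le|\balpha|+1$); (b) bound the cumulants via the moment--cumulant formula using the explicit monochromatic probabilities above, ensuring that the factorial factors from the partition sum are absorbed by the $1/\balpha!$ weighting (transparent for low-degree $m$, where the only admissible configurations are small); and (c) count the admissible configurations while tracking the powers of $k_1\wedge k_2$, $n_1$, and $n_2$ tightly enough that the leading contribution is the $\balpha=0$ term $\lambda^2/(k_1^2\wedge k_2^2)$ and every higher order is $O\big((m+1)r^m\lambda^2/(n_1+n_2)\big)$. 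The appearance of $n_1+n_2$ (rather than $n_1n_2$) is structural: a connected bipartite configuration on $v$ vertices, two of which are pinned to $i$ and $j$, can be realized in only $\asymp(n_1\vee n_2)^{v-2}\asymp(n_1+n_2)^{v-2}$ ways. Everything else runs in parallel with the proof of Theorem~\ref{th: comp-limit-graphon-est} (which treats the symmetric Bernoulli case); the genuinely new features are that (i) the Gaussian form of Proposition~\ref{prop: schramm-wein-binary} introduces the denominator $\balpha!$ and forces one to handle multi-edges---absent from the binary sum, where $\balpha\in\{0,1\}^N$---but the $\balpha!$ weighting keeps them under control, and (ii) ``connected graph on $[n]$'' is replaced throughout by ``connected bipartite graph on $[n_1]\times[n_2]$''.
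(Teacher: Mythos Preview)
Your proposal is essentially correct and follows the same overall strategy as the paper: apply the Gaussian version of the Schramm--Wein bound, interpret $\kappa_{\balpha}$ as a joint cumulant on a bipartite multigraph, control which $\balpha$ contribute, bound the surviving cumulants by $\lambda^{|\balpha|+1}(k_1\wedge k_2)^{-(v-1)}$ times a combinatorial factor, count configurations, and sum. The paper packages these steps as Propositions~\ref{prop: biclustering-alpha-property}, \ref{prop: biclustering-bound-sum-of-alpha} and a bipartite analogue of Lemma~\ref{lm: num-nonzero-alpha}.

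There is one structural difference worth noting. You work directly under the unconditioned prior $\bbP_{\BC(\lambda)}$ and use the vanishing criterion ``underlying simple graph is a tree with $\geq 2$ edges $\Rightarrow$ cumulant $=0$'' (peel a leaf: the pendant indicator is independent of the rest by label symmetry). The paper instead first conditions on $(z_1)_1=1$, as in the SBM proof, and derives its vanishing rules (i)--(ii) of Proposition~\ref{prop: biclustering-alpha-property} by induction. Both routes yield the same constraint $|V(\balpha\cup\{(i,j)\})|\leq |\balpha|+1$ on nonzero terms and the same final bound, but the conditioning buys the paper a clean inductive proof of the cumulant magnitude bound $(|\balpha|+1)^{|\balpha|}$. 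In your route this is the place left most sketchy: ``a factor depending only on $|\balpha|$'' via the moment--cumulant formula is correct but needs an explicit argument tracking how the powers of $(k_1\wedge k_2)^{-1}$ accumulate across the partition blocks; the paper's induction (Proposition~\ref{prop: biclustering-alpha-property}(iii)) is one way to do this, and it transfers verbatim to the unconditioned setting. With that filled in, your per-degree estimate $\sum_{|\balpha|=m}\kappa_{\balpha}^2/\balpha!\le (m+1)r^m\lambda^2/(n_1+n_2)$ and the final summation $\sum_{m\ge 1}(m+1)r^m=r(2-r)/(1-r)^2$ match the paper exactly.
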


Since $\cM_{k_1, k_2, \lambda}\subset \cM_{k_1,k_2}$, the lower bound (\ref{eq:bi-low-low-de}) is also valid for $\cM_{k_1,k_2}$ under the SNR condition (\ref{eq: SNR-condition-biclustering}). To obtain a tight lower bound for $\cM_{k_1,k_2}$, we can maximize the right hand side of (\ref{eq:bi-low-low-de}) under the constraint (\ref{eq: SNR-condition-biclustering}). This leads to the following biclustering lower bound.
\begin{Corollary} \label{coro: biclustering-estimation-final-lower-bound}
Suppose $k_1 \wedge k_2 \geq 2$. For any $D \geq 1$, there exists a universal constant $c > 0$ such that
	\begin{equation*}
	\begin{split}
		\inf_{\hM \in \bbR[Y]^{n_1 \times n_2}_{\leq D} } \sup_{M \in \cM_{k_1, k_2}} \bbE( \ell(\hM,M) ) \geq \frac{c}{D^4} \left( \frac{k_1\wedge k_2}{n_1 \vee n_2} \wedge \frac{1}{k_1\wedge k_2} \right) .
	\end{split}
	\end{equation*}
\end{Corollary}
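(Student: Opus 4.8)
The plan is to derive Corollary~\ref{coro: biclustering-estimation-final-lower-bound} from Theorem~\ref{th: biclustering-comp-lower-bound} by an optimization over the free parameter $\lambda$, in exact analogy with the way Corollary~\ref{coro: graphon-estimation-final-lower-bound} is obtained from Theorem~\ref{th: comp-limit-graphon-est}. Since $\cM_{k_1,k_2,\lambda}\subset\cM_{k_1,k_2}$ for every $\lambda$ admissible in the SNR condition \eqref{eq: SNR-condition-biclustering}, and the prior $\bbP_{\BC(\lambda)}$ is supported on $\cM_{k_1,k_2,\lambda}$, we have for every such $\lambda$
\begin{equation*}
	\inf_{\hM \in \bbR[Y]^{n_1 \times n_2}_{\leq D} } \sup_{M \in \cM_{k_1, k_2}} \bbE( \ell(\hM,M) ) \;\geq\; \inf_{\hM \in \bbR[Y]_{\leq D}^{n_1 \times n_2}} \bbE_{Y, M \sim \bbP_{\BC(\lambda)}}(\ell(\hM,M)) \;\geq\; \frac{\lambda^2}{k_1\wedge k_2} - \left( \frac{\lambda^2}{k_1^2\wedge k_2^2} + \frac{r(2-r)\lambda^2}{(1-r)^2(n_1 + n_2)} \right),
\end{equation*}
the last step being \eqref{eq:bi-low-low-de}. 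The task is then to choose $r$ and $\lambda$ to make this lower bound as large as possible.

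First I would fix a convenient value of $r$, say $r=1/2$, so that the parenthetical correction term becomes $\lambda^2\big(\tfrac{1}{k_1^2\wedge k_2^2} + \tfrac{6}{n_1+n_2}\big)$ and the right-hand side becomes $\lambda^2\big(\tfrac{1}{k_1\wedge k_2} - \tfrac{1}{k_1^2\wedge k_2^2} - \tfrac{6}{n_1+n_2}\big)$. Using $k_1\wedge k_2\geq 2$ we get $\tfrac{1}{k_1\wedge k_2} - \tfrac{1}{k_1^2\wedge k_2^2} \geq \tfrac{1}{2(k_1\wedge k_2)}$, so the bound is at least $\lambda^2\big(\tfrac{1}{2(k_1\wedge k_2)} - \tfrac{6}{n_1+n_2}\big)$. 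Here one must split into two regimes. If $k_1\wedge k_2 \leq c_0\sqrt{n_1+n_2}$ for a suitable small absolute constant $c_0$, then $\tfrac{6}{n_1+n_2} \leq \tfrac{1}{4(k_1\wedge k_2)}$, so the bound is at least $\tfrac{\lambda^2}{4(k_1\wedge k_2)}$; with $r=1/2$ fixed, the SNR constraint \eqref{eq: SNR-condition-biclustering} permits $\lambda^2 = \tfrac{1}{2(D(D+1))^2}\min\big(1, \tfrac{k_1^2\wedge k_2^2}{n_1+n_2}\big)$, which in this regime equals $\tfrac{k_1^2\wedge k_2^2}{2(D(D+1))^2(n_1+n_2)}$, giving a lower bound of order $\tfrac{1}{D^4}\cdot\tfrac{k_1\wedge k_2}{n_1+n_2}$. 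If instead $k_1\wedge k_2 > c_0\sqrt{n_1+n_2}$, then $\tfrac{k_1^2\wedge k_2^2}{n_1+n_2} > c_0^2$, so $\min(1,\cdot)$ is bounded below by a constant and we may take $\lambda^2$ of constant order times $(D(D+1))^{-2}$; plugging into $\tfrac{\lambda^2}{4(k_1\wedge k_2)}$ yields a bound of order $\tfrac{1}{D^4}\cdot\tfrac{1}{k_1\wedge k_2}$. Taking the minimum of the two regime bounds gives $\tfrac{c}{D^4}\big(\tfrac{k_1\wedge k_2}{n_1+n_2}\wedge\tfrac{1}{k_1\wedge k_2}\big)$ uniformly, which is the claimed rate.

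There is essentially no hard step here: the argument is a routine two-case optimization over $\lambda$, structurally identical to the passage from Theorem~\ref{th: comp-limit-graphon-est} to Corollary~\ref{coro: graphon-estimation-final-lower-bound}. The only point requiring a little care is bookkeeping the absolute constants so that in the regime $k_1\wedge k_2 > c_0\sqrt{n_1+n_2}$ the correction term $\tfrac{6}{n_1+n_2}$ really is dominated by $\tfrac{1}{2(k_1\wedge k_2)}$ — this can fail for very large $k_1\wedge k_2$ relative to $n_1+n_2$, but in that regime $\min(1,\tfrac{k_1^2\wedge k_2^2}{n_1+n_2})=1$ forces us into the ``$1/(k_1\wedge k_2)$'' branch anyway, and one checks $\tfrac{1}{2(k_1\wedge k_2)}-\tfrac{6}{n_1+n_2}$ is still of order $\tfrac{1}{k_1\wedge k_2}$ as long as, say, $k_1\wedge k_2\leq (n_1+n_2)/24$; the complementary case $k_1\wedge k_2 > (n_1+n_2)/24$ is handled directly since then $\tfrac{1}{k_1\wedge k_2}$ and $\tfrac{k_1\wedge k_2}{n_1+n_2}$ are both of constant order and the trivial estimator/constant lower bound suffices. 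Choosing the threshold $c_0$ (and if necessary splitting into three cases as just indicated) is the only nontrivial part, and it is purely a matter of constants.
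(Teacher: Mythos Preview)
Your overall approach is the same as the paper's: lower-bound the minimax risk by the Bayes risk under $\bbP_{\BC(\lambda)}$, apply Theorem~\ref{th: biclustering-comp-lower-bound}, and take $\lambda^2$ at the boundary of the SNR constraint \eqref{eq: SNR-condition-biclustering}. The difference is that the paper does \emph{not} fix $r=1/2$ and then do a three-case split; it simply takes $r$ to be a sufficiently small universal constant so that, using $k_1\wedge k_2\ge 2$ and the mild bound $k_1\wedge k_2\le n_1+n_2$, both correction terms in \eqref{eq:bi-low-low-de} are absorbed into $\tfrac{1}{2(k_1\wedge k_2)}$ uniformly, yielding $\ge c_r\,\lambda^2/(k_1\wedge k_2)$ in one stroke. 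Plugging $\lambda^2=\tfrac{r}{(D(D+1))^2}\min\bigl(1,\tfrac{(k_1\wedge k_2)^2}{n_1+n_2}\bigr)$ then gives the claimed rate directly, with no regime analysis at all.

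Your three-case argument, by contrast, has a genuine gap in the last case. When $k_1\wedge k_2>(n_1+n_2)/24$ you assert that ``$\tfrac{1}{k_1\wedge k_2}$ and $\tfrac{k_1\wedge k_2}{n_1+n_2}$ are both of constant order,'' but only the second quantity is bounded below in that regime; $\tfrac{1}{k_1\wedge k_2}$ can be arbitrarily small (e.g.\ $k_1\wedge k_2\asymp n_1+n_2$). So the appeal to a ``trivial estimator/constant lower bound'' does not deliver the required $\tfrac{c}{D^4(k_1\wedge k_2)}$. Concretely, with $r=1/2$ and, say, $k_1\wedge k_2=(n_1+n_2)/10$, your expression $\tfrac{1}{2(k_1\wedge k_2)}-\tfrac{6}{n_1+n_2}$ is negative, and Theorem~\ref{th: biclustering-comp-lower-bound} gives you nothing. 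The remedy is exactly what the paper does: abandon $r=1/2$ and instead pick $r$ small enough that $\tfrac{r(2-r)}{(1-r)^2}\le\tfrac14$; then $\tfrac{r(2-r)}{(1-r)^2(n_1+n_2)}\le\tfrac{1}{4(n_1+n_2)}\le\tfrac{1}{4(k_1\wedge k_2)}$ for all $k_1\wedge k_2\le n_1+n_2$, and no case split is needed.
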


When $k_1 \wedge k_2 \leq \sqrt{n_1 \vee n_2}$ and $n_1 \asymp n_2$, the lower bound matches the rate of the spectral algorithm.

\section{Proofs of the Main Results in Section \ref{sec: computational-limit-graphon-estimation} } \label{sec: proof-of-main-result}
In this section, we provide proofs for the low-degree polynomial lower bound of graphon estimation in SBM. We will use Proposition \ref{prop: schramm-wein-binary} to prove our results and one of the key parts there is to understand $\kappa_{\balpha}(x, X)$. We will first introduce a few preliminary results regarding $\kappa_{\balpha}(x,X)$, then prove Theorem \ref{th: comp-limit-graphon-est} in Section \ref{sec: proof-main-theorem}, and finally prove Corollary \ref{coro: graphon-estimation-final-lower-bound} in Section \ref{sec: proof-main-corollary}.

Note that $\kappa_{\balpha}(x, X)$ depends on the prior of $X$. Now, let us introduce the following {\it uniform SBM prior with fixed first vertex}.
\begin{Definition} \label{def: uniform-prior-SBM} Consider a $k$-class SBM with $n$ vertices. We say $X \in \bbR^{n (n-1)/2}$ is drawn from the uniform SBM prior with fixed first vertex and parameter $\lambda > 0$ if it is generated as follows: (1) generate a membership vector $z \in [k]^{n}$ such that $z_1 = 1$, $z_j \overset{i.i.d.}\sim \text{Unif}\{1,\ldots, k\}$ for $j = 2,\ldots, n$; (2) let $X = \rmvec( \lambda Z_{ij}: i < j)$, where the symmetric matrix $Z \in \{0,1\}^{n \times n}$ is the corresponding membership matrix of $z$. Here the notation $\rmvec(Z_{ij}: i < j)$ means the vectorization of the upper triangular matrix of $Z$ by column.
\end{Definition}

Then we have the following bounds on $|\kappa_{\balpha}(x, X)|$ when $X$ is drawn from the prior in Definition \ref{def: uniform-prior-SBM}.
\begin{Proposition} \label{prop: kappa-property-SBM}
	Suppose $X$ is generated from the uniform SBM prior with fixed first vertex and parameter $\lambda$. Denote the membership vector of $X$ as $z$ and the first entry of $X$ as $x$, i.e., $x = \indi(z_1 = z_2)$. Then for any multigraph $\balpha$ on $X$ with $|\balpha| \geq 1$, we have
	\begin{itemize}
		\item (i) if $\balpha$ is a disconnected or $\balpha$ is connected but $2 \notin V(\balpha) $, then $\kappa_{\balpha}(x, X) = 0$;
		\item (ii) if $\balpha$ is connected, $2 \in V(\balpha) $ and $1 \notin V(\balpha)$, then $\kappa_{\balpha}(x, X) = 0$;
		\item  (iii) if $\balpha$ is connected, $2 \in V(\balpha) $ and $1 \in V(\balpha)$, then $|\kappa_{\balpha}(x, X)| \leq \lambda^{|\balpha|+1} (1/k)^{|V(\balpha)| - 1} (|\balpha| +1 )^{|\balpha|} $.
	\end{itemize}
\end{Proposition}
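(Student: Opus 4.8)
The plan is to exploit the combinatorial structure of the uniform SBM prior with fixed first vertex, and in particular the fact that the signal entries $X_{ij} = \lambda Z_{ij}$ are functions of the latent labels $z$, which are i.i.d.\ uniform on $[k]$ for $j \geq 2$. The key tool is the cumulant interpretation \eqref{eq: kappa-cumulant-connection}: writing $\balpha$ as a multiset of edges $\{a_1,\ldots,a_m\}$ on $[n]$, we have $\kappa_{\balpha}(x,X) = \kappa(x, X_{a_1},\ldots,X_{a_m})$ where $x = \indi(z_1=z_2)$. Since cumulants vanish whenever the collection of random variables can be partitioned into two mutually independent groups (a standard property, presumably recorded in Appendix \ref{sec: cumulant}), parts (i) and (ii) should follow quickly from independence of disjoint blocks of coordinates.

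For part (i): if $\balpha$ is disconnected as a multigraph, its edge set splits into two vertex-disjoint pieces. Each $X_{ij}$ depends only on $(z_i, z_j)$, so the two groups of edge-variables depend on disjoint sets of labels; since the labels $z_2,\ldots,z_n$ are independent (and $z_1$ is deterministic), the two groups are independent, so the joint cumulant — including the variable $x$, which sits with whichever component contains vertices $1$ and $2$ — vanishes. The same reasoning handles the case $\balpha$ connected but $2\notin V(\balpha)$: then $x=\indi(z_1=z_2)$ involves $z_2$, a label on which none of the $X_{a_i}$ depends, so $x$ is independent of the whole family $\{X_{a_i}\}$, and a cumulant with an independent adjoined variable is zero. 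Part (ii) is symmetric: if $1\notin V(\balpha)$ but $2\in V(\balpha)$, then $z_1$ enters only through $x$; but $z_1=1$ is deterministic, so $x = \indi(z_2=1)$ is a function of $z_2$ alone — I need to check that it is independent of $\{X_{a_i}: i\}$. Here $2\in V(\balpha)$ so some $X_{a_i}$ does depend on $z_2$, so this needs a slightly more careful argument: I would condition on $z_2$ and use that conditionally $x$ becomes deterministic while the cumulant of the remaining variables, computed as a function of $z_2$, should be shown to integrate to zero — or, more cleanly, observe that by symmetry of the prior under relabeling of the $k$ classes (for vertices other than $1$), $\bbE[\text{anything}\cdot\indi(z_2=1)]$ decomposes in a way that forces the cumulant to vanish. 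I expect this is the place where one must be most careful about exactly which independence/symmetry is invoked.

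For part (iii), the bound $|\kappa_{\balpha}(x,X)| \leq \lambda^{|\balpha|+1}(1/k)^{|V(\balpha)|-1}(|\balpha|+1)^{|\balpha|}$ is the quantitative heart. The factor $\lambda^{|\balpha|+1}$ is immediate by multilinearity of cumulants (there are $|\balpha|$ copies of $\lambda$ from the $X$'s and one from $x$, which is actually $\indi(z_1=z_2)$, so really $\lambda^{|\balpha|}$ times a cumulant of indicators — I will absorb the extra $\lambda$ by bookkeeping, or note $x$ here is the indicator times nothing, and re-examine the exponent). The $(1/k)^{|V(\balpha)|-1}$ factor should come from the moment–cumulant expansion: $\kappa$ is an alternating sum over partitions of products of joint moments $\bbE[\prod \indi(z_{\cdot}=z_{\cdot})]$, and each such moment of indicator constraints on a connected structure with $v$ vertices is $k^{-(v-1)}$ (fixing one label determines the rest up to a uniform choice), with vertex $1$'s label already pinned. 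The combinatorial factor $(|\balpha|+1)^{|\balpha|}$ should bound the number of partitions / Cayley-type count of terms surviving in the moment–cumulant formula after the vanishing terms from (i)–(ii) are discarded; I would derive it by bounding the number of set partitions of the $|\balpha|+1$ "items" (the $m=|\balpha|$ edges plus $x$) that yield connected, vertex-$1$-and-$2$-containing blocks, using something like the bound $B_{m+1} \leq (m+1)^{m}$ on Bell-type counts, or a tree-counting argument. The main obstacle will be organizing the moment–cumulant sum so that (a) only the "good" partitions contribute — each block must be connected and the block containing $x$ must touch both $1$ and $2$, by parts (i)–(ii) applied blockwise — and (b) the per-term bound $\lambda^{|\balpha|+1} k^{-(|V(\balpha)|-1)}$ holds uniformly, so that multiplying by the number of terms gives the stated estimate. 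Getting the $k$-exponent exactly right (i.e.\ $|V(\balpha)|-1$ rather than something weaker) across all partitions is the delicate point, since different partitions induce different vertex counts; I would argue that the dominant (largest) terms are exactly those whose blocks collectively span all of $V(\balpha)$ with the right connectivity, and that sub-dominant terms only help.
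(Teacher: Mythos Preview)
Your argument for part (i) matches the paper exactly. For part (ii) your symmetry intuition is sound and can be made rigorous (relabel the $k$ classes by a uniformly random permutation on vertices $2,\ldots,n$: this leaves every $X_{a_i}$ invariant since $1\notin V(\balpha)$, has the same joint law, and makes $x=\lambda\indi(z_2=1)$ independent of the rest), but the paper takes a different route. It computes the moments explicitly---for connected $\balpha$,
\[
\bbE[X^{\balpha}]=\lambda^{|\balpha|}(1/k)^{|V(\balpha)|-1},\qquad \bbE[xX^{\balpha}]=\lambda^{|\balpha|+1}(1/k)^{|V(\balpha)\cup\{1,2\}|-1},
\]
and then runs an induction on $|\balpha|$ through the recursion \eqref{eq: kappa-recursive-relation}: by part (i) and the inductive hypothesis, every $\kappa_{\bbeta}(x,X)$ with $1\leq|\bbeta|<|\balpha|$ vanishes, so only the $\bbeta=0$ term survives and cancels $\bbE[xX^{\balpha}]$ exactly.

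For part (iii) there is a genuine gap in your plan. You propose to expand via the moment--cumulant formula over set partitions and bound each surviving term by $\lambda^{|\balpha|+1}(1/k)^{|V(\balpha)|-1}$, then count terms. The delicate point you flag is real and unaddressed: for a general partition $\pi$, the $1/k$-exponent of $\prod_{B\in\pi}\bbE[\cdots]$ is $\sum_B(|V(\bbeta_B)|-\cC(\bbeta_B))$ plus a correction from the block containing $x$, and showing this is always at least $|V(\balpha)|-1$ requires a combinatorial inequality you have not stated. Moreover, bounding the weighted partition count $\sum_\pi(|\pi|-1)!$ by $(|\balpha|+1)^{|\balpha|}$ is not immediate.

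The paper avoids both issues by again inducting through the recursion \eqref{eq: kappa-recursive-relation}. After applying parts (i)--(ii), only connected $\bbeta$ with $1,2\in V(\bbeta)$ contribute. The $k$-exponent is controlled by a single clean inequality (Lemma~\ref{lm: graph-subgraph-vertex-connection}): for connected $\balpha$ and connected subgraph $\bbeta$,
\[
|V(\balpha-\bbeta)|+|V(\bbeta)|-\cC(\balpha-\bbeta)\ \geq\ |V(\balpha)|,
\]
which, combined with the inductive bound on $|\kappa_{\bbeta}(x,X)|$ and the explicit formula $\bbE[X^{\balpha-\bbeta}]=\lambda^{|\balpha-\bbeta|}(1/k)^{|V(\balpha-\bbeta)|-\cC(\balpha-\bbeta)}$, gives the uniform factor $(1/k)^{|V(\balpha)|-1}$. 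The combinatorial factor then falls out of the elementary identity $\sum_{|\bbeta|=\ell}\binom{\balpha}{\bbeta}=\binom{|\balpha|}{\ell}$ and the binomial theorem $\sum_{\ell=0}^{|\balpha|}|\balpha|^{\ell}\binom{|\balpha|}{\ell}=(|\balpha|+1)^{|\balpha|}$. This recursive strategy is what makes the argument go through; your partition-based approach would need an analogue of Lemma~\ref{lm: graph-subgraph-vertex-connection} for arbitrary partitions, which you have not supplied.
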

\begin{proof} Throughout the proofs, we will view $\balpha$ as a multigraph of $n$ vertices.

	 {\bf Proof of (i).}  By \eqref{eq: kappa-cumulant-connection} we know that $\kappa_{\balpha}(x, X)$ is the joint cumulant of a group of random variables, say $\cG$. For the case $\balpha$ is disconnected, $\cG$ could be divided into $\cG_1$ and $\cG_2$ and $\cG_1$ and $\cG_2$ are independent of each other. Thus, by Proposition \ref{prop: cumulant-independent-prop} in Appendix \ref{sec: cumulant}, $\kappa_{\balpha}(x, X)$ is zero. Similarly, for the case $\balpha$ is connected but $2 \notin V(\balpha)$, we know in the prior for $X$, $z_1$ is known and fixed, so if $2 \notin V(\balpha)$, $x$ will be independent of $X$. By the same argument, $\kappa_{\balpha}(x, X)$ will be zero.
	 
	 {\bf Proof of (ii).} First, for any connected $\balpha$, we have
	 \begin{equation} \label{eq: E-X-alpha-formula}
	 	\begin{split}
	 		\bbE[X^{\balpha}] &= \lambda^{|\balpha|} \bbP( \text{all vertices in } V(\balpha) \text{ are in the same community} ) = \lambda^{|\balpha|} \cdot \left(\frac{1}{k}\right)^{ |V(\balpha)| -1 }, \\
	 		\bbE[xX^{\balpha}] &=  \lambda^{|\balpha| + 1} \bbP( \text{all vertices in } V(\balpha) \cup \{1,2 \} \text{ are in the same community} ) \\
	 		& = \lambda^{|\balpha| + 1}\left(\frac{1}{k}\right)^{ |V(\balpha) \cup \{1,2 \}| -1 }.
	 	\end{split}
	 \end{equation}

	 Next, we prove the claim by induction. When $|\balpha| = 0$, $\kappa_0(x, X) = \bbE(x) = \frac{\lambda}{k}$. Then, for $\balpha$ such that $|\balpha| = 1$, $2 \in V(\balpha) $ and $1 \notin V(\balpha)$, we have 
	 \begin{equation*}
	 	\kappa_{\balpha}(x, X) \overset{ \eqref{eq: kappa-recursive-relation} } = \bbE[x X^{\balpha}] - \kappa_0(x, X) \bbE[X^{\balpha}] \overset{\eqref{eq: E-X-alpha-formula} } = \lambda^{|\balpha| + 1} \left(\frac{1}{k}\right)^{ |V(\balpha)| } - \frac{\lambda}{k} \lambda^{|\balpha|}\left(\frac{1}{k}\right)^{ |V(\balpha)| -1 }  =0.
	 \end{equation*}
	 
	 Now assume that given any $t \geq 2$ and any $\balpha$ with $2 \in V(\balpha) $, $1 \notin V(\balpha)$ and $|\balpha| < t$, $\kappa_{\balpha}(x, X) = 0$. Then for any such $\balpha$ with $|\balpha| = t$, we have
	 \begin{equation*}
	 	\begin{split}
	 		\kappa_{\balpha}(x, X) &\overset{ \eqref{eq: kappa-recursive-relation} } =  \bbE(x X^{\balpha}) - \sum_{0 \leq \bbeta \lneq \balpha } \kappa_{\bbeta}(x, X) {\balpha \choose \bbeta } \bbE[X^{\balpha - \bbeta}] \\
	 		& \overset{(a)}= \bbE(x X^{\balpha}) -\kappa_0(x, X) \bbE[X^{\balpha }] = \lambda^{|\balpha| + 1} \left(\frac{1}{k}\right)^{ |V(\balpha)| } - \frac{\lambda}{k} \lambda^{|\balpha|}\left(\frac{1}{k}\right)^{ |V(\balpha)| -1 }  =0,
	 	\end{split}
	 \end{equation*} where (a) is because for any $\bbeta$ such that $|\bbeta| \geq 1$, $1 \notin V(\bbeta)$ since $\bbeta$ is a subgraph of $\balpha$, and thus $\kappa_{\bbeta}(x, X) = 0$ for either the case $2 \notin V(\bbeta)$ by the result we have proved in part (i) and the case $2 \in V(\bbeta)$ by the induction assumption.
This finishes the induction, and we have that for any $\balpha$ such that $|\balpha| \geq 1$, $2 \in V(\balpha) $ and $1 \notin V(\balpha)$, $\kappa_{\balpha}(x, X) = 0$. 

{\bf Proof of (iii).} First, for any connected subgraph $\bbeta$ of $\balpha$,
\begin{equation}\label{eq: E-X-alpha-beta-formula}
\begin{split}
	 \bbE [X^{\balpha - \bbeta}] &= \lambda^{|\balpha - \bbeta|} \bbP(\text{each connected component in }\balpha - \bbeta \text{ belongs to the same community} ) \\
	& = \lambda^{|\balpha - \bbeta|} \left(\frac{1}{k}\right)^{ |V(\balpha-\bbeta)| - \cC(\balpha - \bbeta) },
\end{split}
\end{equation} where $\cC(\balpha - \bbeta)$ denotes the number of connected components in $\balpha - \bbeta$. 

Next, we prove the claim by induction. Recall that when $|\balpha| = 0$, $\kappa_0(x, X) = \frac{\lambda}{k}$. Then, for $\balpha$ such that $|\balpha| = 1$, $2 \in V(\balpha) $ and $1 \in V(\balpha)$, we have 
	 \begin{equation*}
	 \begin{split}
	 	\kappa_{\balpha}(x, X) &\overset{ \eqref{eq: kappa-recursive-relation} } = \bbE[x X^{\balpha}] - \kappa_0(x, X) \bbE[X^{\balpha}] \overset{\eqref{eq: E-X-alpha-formula} } = \lambda^{|\balpha| + 1} \left(\frac{1}{k}\right)^{ |V(\balpha)|-1 } - \frac{\lambda}{k} \lambda^{|\balpha|}\left(\frac{1}{k}\right)^{ |V(\balpha)| -1 } \\
	 	& = \lambda^{|\balpha| + 1} \left(\frac{1}{k}\right)^{ |V(\balpha)|-1 } - \frac{\lambda}{k} \lambda^{|\balpha|}\left(\frac{1}{k}\right)^{ |V(\balpha)| -1 } = \lambda^{|\balpha| + 1} \left(\frac{1}{k}\right)^{ |V(\balpha)|-1 } ( 1- 1/k ),
	 \end{split}
	 \end{equation*} thus $|\kappa_{\balpha}(x, X)| \leq \lambda^{|\balpha|+1} (1/k)^{|V(\balpha)| - 1} (|\balpha| +1 )^{|\balpha|}$ holds for $|\balpha| = 1$. 

Now assume that given any $t \geq 2$ and any $\balpha$ with $2 \in V(\balpha) $, $1 \in V(\balpha)$ and $|\balpha| < t$, $|\kappa_{\balpha}(x, X)| \leq \lambda^{|\balpha|+1} (1/k)^{|V(\balpha)| - 1} (|\balpha| +1 )^{|\balpha|}$. Then for any such $\balpha$ with $|\balpha| = t$, we have
\begin{equation} \label{ineq: SBM-alpha-prop3-main-arg1}
	\begin{split}
		|\kappa_{\balpha}(x, X)|  & \overset{ \eqref{eq: kappa-recursive-relation} } =  \big|\bbE(x X^{\balpha}) - \sum_{0 \leq \bbeta \lneq \balpha } \kappa_{\bbeta}(x, X) {\balpha \choose \bbeta } \bbE[X^{\balpha - \bbeta}]\big| \\
		& \leq \big|\bbE(x X^{\balpha})\big| + \sum_{0 \leq \bbeta \lneq \balpha } \big|\kappa_{\bbeta}(x, X) \big| {\balpha \choose \bbeta } \bbE[X^{\balpha - \bbeta}] \\
		& \overset{\text{Part }(i)}=  \big|\bbE(x X^{\balpha})\big| + \sum_{0 \leq \bbeta \lneq \balpha: \bbeta \text{ is connected }  } \big|\kappa_{\bbeta}(x, X) \big| {\balpha \choose \bbeta } \bbE[X^{\balpha - \bbeta}] \\
		& \overset{\eqref{eq: E-X-alpha-formula}, \eqref{eq: E-X-alpha-beta-formula} } = \lambda^{|\balpha| + 1} \left(\frac{1}{k}\right)^{ |V(\balpha)|-1 } + \sum_{0 \leq \bbeta \lneq \balpha: \bbeta \text{ is connected } } \big|\kappa_{\bbeta}(x, X) \big| {\balpha \choose \bbeta } \lambda^{|\balpha - \bbeta|} \left(\frac{1}{k}\right)^{ |V(\balpha-\bbeta)| - \cC(\balpha - \bbeta) } \\
		& \overset{(a)}= \lambda^{|\balpha| + 1} \left(\frac{1}{k}\right)^{ |V(\balpha)|-1 } +  \big|\kappa_{0}(x, X) \big|\lambda^{|\balpha|} \left(\frac{1}{k}\right)^{ |V(\balpha)| - 1 } \\
		 &+\sum_{ \substack{0 \lneq \bbeta \lneq \balpha,\\ \bbeta \text{ is connected },\\ 1 \in V(\bbeta), 2 \in V(\bbeta)} } \big|\kappa_{\bbeta}(x, X) \big| {\balpha \choose \bbeta } \lambda^{|\balpha - \bbeta|} \left(\frac{1}{k}\right)^{ |V(\balpha-\bbeta)| - \cC(\balpha - \bbeta) } \\
		 & \overset{(b)} = \lambda^{|\balpha| + 1} \left(\frac{1}{k}\right)^{ |V(\balpha)|-1 } + \lambda^{|\balpha| + 1} \left(\frac{1}{k}\right)^{ |V(\balpha)|} \\
		 & +\sum_{ \substack{0 \lneq \bbeta \lneq \balpha,\\ \bbeta \text{ is connected },\\ 1 \in V(\bbeta), 2 \in V(\bbeta)} } \lambda^{|\bbeta|+1} (1/k)^{|V(\bbeta)| - 1} (|\bbeta| +1 )^{|\bbeta|}  {\balpha \choose \bbeta } \lambda^{|\balpha - \bbeta|} \left(\frac{1}{k}\right)^{ |V(\balpha-\bbeta)| - \cC(\balpha - \bbeta) }.
	\end{split}
\end{equation} where in (a), we separate the term $\bbeta =0$ in the summation and then use the results proved in (i)(ii) of this proposition; (b) is because $\kappa_0(x, X) = \frac{\lambda}{k}$ and by the induction assumption. 

Next,
\begin{equation}\label{ineq: SBM-alpha-prop3-main-arg2}
	\begin{split}
		|\kappa_{\balpha}(x, X)|  \leq & \lambda^{|\balpha| + 1} \left(\frac{1}{k}\right)^{ |V(\balpha)|-1 } + \lambda^{|\balpha| + 1} \left(\frac{1}{k}\right)^{ |V(\balpha)|} \\
		 & +\sum_{ \substack{0 \lneq \bbeta \lneq \balpha,\\ \bbeta \text{ is connected },\\ 1 \in V(\bbeta), 2 \in V(\bbeta)} } \lambda^{|\bbeta|+1} (1/k)^{|V(\bbeta)| - 1} (|\bbeta| +1 )^{|\bbeta|}  {\balpha \choose \bbeta } \lambda^{|\balpha - \bbeta|} \left(\frac{1}{k}\right)^{ |V(\balpha-\bbeta)| - \cC(\balpha - \bbeta) } \\
		 & \overset{(a) }\leq  2\lambda^{|\balpha| + 1} \left(\frac{1}{k}\right)^{ |V(\balpha)|-1 } + \lambda^{|\balpha| + 1}\left(\frac{1}{k}\right)^{ |V(\balpha)|-1 }\sum_{ \substack{0 \lneq \bbeta \lneq \balpha,\\ \bbeta \text{ is connected },\\ 1 \in V(\bbeta), 2 \in V(\bbeta)} }  (|\bbeta| +1 )^{|\bbeta|}  {\balpha \choose \bbeta } \\
		 & \leq \lambda^{|\balpha| + 1} \left(\frac{1}{k}\right)^{ |V(\balpha)|-1 } \left(2 +  \sum_{ \substack{0 \lneq \bbeta \lneq \balpha} } (|\bbeta| +1 )^{|\bbeta|}  {\balpha \choose \bbeta } \right) \\
		 & \overset{(b)}= \lambda^{|\balpha| + 1} \left(\frac{1}{k}\right)^{ |V(\balpha)|-1 } \left(2 +  \sum_{ \ell=1 }^{|\balpha|-1} (\ell +1 )^{\ell}  {|\balpha| \choose \ell } \right) \\
		 & \leq \lambda^{|\balpha| + 1} \left(\frac{1}{k}\right)^{ |V(\balpha)|-1 } \left(2 +  \sum_{ \ell=1 }^{|\balpha|-1} |\balpha|^{\ell}  {|\balpha| \choose \ell } \right) \\
		 & \leq \lambda^{|\balpha| + 1} \left(\frac{1}{k}\right)^{ |V(\balpha)|-1 } \left(  \sum_{ \ell=0 }^{|\balpha|} |\balpha|^{\ell}  {|\balpha| \choose \ell } \right) \\
		 & = \lambda^{|\balpha| + 1} \left(\frac{1}{k}\right)^{ |V(\balpha)|-1 } \left(  |\balpha| + 1 \right)^{|\balpha|},
	\end{split}
\end{equation} where (a) is due to the following Lemma \ref{lm: graph-subgraph-vertex-connection} and in (b) we use the fact $\sum_{\bbeta: |\bbeta| = \ell} {\balpha \choose \bbeta} = {|\balpha| \choose \ell}$ for any $\ell \leq |\balpha|$.

\begin{Lemma} \label{lm: graph-subgraph-vertex-connection}
	Given any connected multigraph $\balpha$. Suppose $\bbeta$ is a connected subgraph of $\balpha$, then 
	\begin{equation*}
		|V(\balpha - \bbeta)| + |V(\bbeta)| - \cC(\balpha - \bbeta) \geq |V(\balpha)|,
	\end{equation*} where $\cC(\balpha - \bbeta)$ denotes the number of connected component in the graph $\balpha - \bbeta$.
\end{Lemma}
\begin{proof} First, it is clear that $V(\balpha - \bbeta) \cup V(\bbeta) \supseteq  V(\balpha)$. 
	Since both $\balpha$ and $\bbeta$ are connected multigraphs, for each connected component in $\balpha - \bbeta$, it must have at least a common vertex with $\bbeta$. Moreover, that common vertex is counted twice in computing $|V(\balpha - \bbeta)| + |V(\bbeta)|$. This finishes the proof.
\end{proof}
This finishes the induction and the proof of this proposition.
\end{proof}

The next lemma counts the number of connected $\balpha$ in (iii) of Proposition \ref{prop: kappa-property-SBM} such that $\kappa_{\balpha}(x,X)$ is nonzero.
\begin{Lemma}\label{lm: num-nonzero-alpha}
	Given any $d \geq 1$, $0 \leq h \leq d-1$, the number of connected $\balpha$ such that $1 \in V(\balpha)$, $2 \in V(\balpha)$, $|\balpha| = d$ and $|V(\balpha)| = d+1 -h$ is at most $n^{d-h-1} d^{d+h}$.
\end{Lemma}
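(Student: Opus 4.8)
The plan is to count the multigraphs $\balpha$ by separating the choice of the vertex set, the choice of the underlying simple edge structure, and the choice of the multiplicities. Fix $d \geq 1$ and $0 \leq h \leq d-1$. A connected multigraph $\balpha$ with $|\balpha| = d$ and $|V(\balpha)| = d+1-h$ has exactly $v := d+1-h$ vertices. Since $\balpha$ must contain both vertices $1$ and $2$, I would first choose the remaining $v - 2 = d-1-h$ vertices of $V(\balpha)$ from the other $n-2$ labels; this contributes a factor of at most $\binom{n-2}{d-1-h} \leq n^{d-1-h}$, which already accounts for the $n^{d-h-1}$ in the bound.

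Next, conditionally on the vertex set being fixed (of size $v = d+1-h$), I would bound the number of connected multigraphs on that vertex set with exactly $d$ edges (counted with multiplicity). The idea is to think of building $\balpha$ edge by edge: list the $d$ edges in some order $e_1, \ldots, e_d$, where each $e_t$ is an unordered pair of vertices from a set of size $v$. Naively there are at most $\binom{v}{2}^d \leq v^{2d}$ ordered edge-sequences, and each multigraph is obtained from at least one such sequence, so the count of multigraphs on the fixed vertex set is at most $v^{2d} = (d+1-h)^{2d} \leq d^{2d}$ when $h \geq 1$; a small separate check handles $h = 0$ where $v = d+1$ and one instead uses connectivity to save, noting $v^{2d}=(d+1)^{2d}$ which is comparable to $d^{d+h}=d^d$ only after using that a connected graph on $v$ vertices with $d = v-1$ edges is a tree — so for $h=0$ one should count spanning trees (Cayley: $v^{v-2} = (d+1)^{d-1} \leq d^d$ for $d$ large, with the labels and a constant absorbed). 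In general, for a connected multigraph on $v$ labeled vertices one first picks a spanning connected sub-structure and then distributes the remaining $d - (v-1) = h$ edges; picking a spanning tree costs $v^{v-2}\le v^{v}$ and placing the extra $h$ edges costs at most $\binom{v}{2}^h \le v^{2h}$, giving at most $v^{v+2h} = (d+1-h)^{d+1-h+2h} = (d+1-h)^{d+1+h} \leq d^{d+h}$ for $h \leq d-1$ (using $d+1-h \leq d$ when $h\ge 1$, and the tree count when $h=0$). Combining with the vertex-set choice yields at most $n^{d-h-1} \cdot d^{d+h}$, as claimed.

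The main obstacle I expect is getting the exponents exactly right in the $d^{d+h}$ factor while keeping the argument clean across the boundary cases $h = 0$ (pure tree, where Cayley's formula or a crude $v^{v-2}$ bound is needed and the naive edge-sequence bound $v^{2d}$ is too weak) and $h = d-1$ (where $v = 2$, the multigraph is a bundle of $d$ parallel edges between vertices $1$ and $2$, so the count on a fixed vertex set is literally $1$, and the whole bound reduces to $n^{d-h-1} = n^0 = 1$ times $d^{2d-1}$ — comfortably satisfied). The cleanest route is probably the unified spanning-tree-plus-extra-edges decomposition: bound the number of connected multigraphs on $v$ labeled vertices with $d$ edges by (number of spanning trees) $\times$ (ways to add $h=d-v+1$ extra edges with multiplicity) $\le v^{v-2}\cdot v^{2h}$, then check $v^{v-2+2h}\le d^{d+h}$ using $v=d+1-h\le d$ for $h\ge1$ and handling $h=0$ directly; I would present it that way and leave the elementary inequality verifications to the reader.
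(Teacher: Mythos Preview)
Your approach is sound and genuinely different from the paper's. The paper builds $\balpha$ by an exploration process: starting from vertex $2$, it adds $d-h$ edges one at a time so that each introduces a fresh vertex (with a case split on the step at which vertex $1$ appears), and then places the remaining $h$ edges freely among the existing vertices; this gives $(d-h)\,(nd)^{d-h-1}(d^2)^h \leq n^{d-h-1}d^{d+h}$ directly. Your decomposition---choose the vertex set, then a spanning tree via Cayley's formula, then the $h$ surplus edges---is cleaner conceptually and avoids the case split over the position of vertex $1$, at the cost of invoking Cayley. Both routes are at heart ``spanning tree plus extras,'' but the paper encodes the tree as an ordered edge sequence while you count labeled trees statically.

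One slip to fix: you loosen $v^{v-2}$ to $v^{v}$ before combining with $v^{2h}$, and then assert $(d+1-h)^{d+1+h} \leq d^{d+h}$. That inequality is false already at $d=5$, $h=1$ (where $5^7 > 5^6$). Simply do not loosen: keep Cayley's $v^{v-2}$, so the combined count on the fixed vertex set is $v^{\,v-2+2h} = (d+1-h)^{\,d-1+h}$. For $h \geq 1$ this is at most $d^{\,d-1+h} \leq d^{\,d+h}$ since $d+1-h \leq d$; for $h=0$ it is $(d+1)^{d-1} \leq d^d$, as $\frac{(d+1)^{d-1}}{d^d} = \frac{1}{d}(1+1/d)^{d-1} \leq e/d \leq 1$ for $d \geq 3$, and by direct check for $d=1,2$. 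With this correction your argument goes through and delivers exactly $n^{d-h-1}d^{\,d+h}$.
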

\begin{proof}
	We view $\balpha$ as a multigraph on $[n]$ and count the number of ways to construct such $\balpha$. The counting strategy is the following: we start with adding Vertex $2$ to $\balpha$ and then add $(d-h)$ edges such that for each edge there, it will introduce a new vertex; then we add the rest of $h$ edges on these existing vertices. In the first stage above, we can also count different cases by considering when will Vertex $1$ be introduced in adding new vertices.
	\begin{itemize}
		\item If Vertex $1$ is the first vertex to be added after Vertex $2$, then the number of such choices of $\balpha$ is at most $(nd)^{d-h-1} (d^2)^h$. Here $(nd)^{d-h-1}$ is because for each of the rest of $d-h-1$ edges, the number of choices for the starting vertex is at most $d$ since there are at most $(d+1)$ vertices in $\balpha$ and the number of choices for a newly introduced vertex is at most $n$. $(d^2)^h$ comes from that in the second stage, the choice of each extra edge is at most ${(d+1) \choose 2} =(d+1)d/2 \leq d^2$.
		\item By the same counting strategy, if Vertex $1$ is the second vertex to be added in the first stage, then the number of such choices of $\balpha$ is at most $(nd)^{d-h-1} (d^2)^h$.
		\item $\cdots$
		\item If Vertex $1$ is the $(d-h)$-th vertex to be added in the first stage, then the number of such choices of $\balpha$ is at most $(nd)^{d-h-1} (d^2)^h$.
	\end{itemize}
	  By adding them together, the number of choices of connected $\balpha$ such that $1 \in V(\balpha)$, $2 \in V(\balpha)$, $|\balpha| = d$ and $|V(\balpha)| = d+1 -h$ is at most 
	  \begin{equation*}
	  	(d-h) (nd)^{d-h-1} (d^2)^h \leq d (nd)^{d-h-1} (d^2)^h = n^{d-h-1} d^{d+h}.
	  \end{equation*}
\end{proof}

In the following Proposition \ref{prop: sum-kappa-bound-SBM}, we bound $\sum_{\balpha \in \bbN^N, 0 \leq |\balpha| \leq D }  \frac{\kappa_{\balpha}^2(x,X)}{\balpha!}$ when $X$ is generated from the uniform SBM prior with fixed first vertex.

\begin{Proposition} \label{prop: sum-kappa-bound-SBM}
	Under the same setting as in Proposition \ref{prop: kappa-property-SBM}, for any $D \geq 1$, we have
	\begin{equation*}
		\sum_{\balpha \in \bbN^N, 0 \leq |\balpha| \leq D }  \frac{\kappa_{\balpha}^2(x,X)}{\balpha!} \leq \frac{\lambda^2}{k^2} - \frac{\lambda^2}{n} +\frac{\lambda^2}{n} \sum_{h=0}^d \left( D^2 (D+1)^2 \lambda^2 \right)^h \sum_{d=h}^D \left( D (D+1)^2 \frac{n \lambda^2}{k^2} \right)^{d-h}.
	\end{equation*}
In particular, for any $0 < r < 1$, if $\lambda^2 \leq \frac{r}{(D(D+1))^2} \min\left(1, \frac{k^2}{n} \right)$, then we have 
	\begin{equation*}
		\sum_{\balpha \in \bbN^N, 0 \leq |\balpha| \leq D }  \frac{\kappa_{\balpha}^2(x,X)}{\balpha!} \leq \frac{\lambda^2}{k^2} + \frac{r(2-r)\lambda^2}{(1-r)^2n}.
	\end{equation*}	
\end{Proposition}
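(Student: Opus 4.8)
The plan is to expand the sum term by term, feeding in the vanishing pattern and the magnitude bound for $\kappa_{\balpha}(x,X)$ established in Proposition~\ref{prop: kappa-property-SBM} together with the enumeration in Lemma~\ref{lm: num-nonzero-alpha}, and then to organize whatever survives according to the two integer parameters $d=|\balpha|$ and $h=d+1-|V(\balpha)|$ (the number of edges of $\balpha$ in excess of a spanning tree on $V(\balpha)$). First I would peel off the $\balpha=\0$ term, which contributes $\kappa_0^2(x,X)/\0! = (\bbE x)^2 = \lambda^2/k^2$ by the computation already recorded in the proof of Proposition~\ref{prop: kappa-property-SBM}. For $|\balpha|\geq 1$, parts (i)--(ii) of Proposition~\ref{prop: kappa-property-SBM} annihilate every $\balpha$ except those that are connected and satisfy $\{1,2\}\subseteq V(\balpha)$; for those, part (iii) gives $\kappa_{\balpha}^2(x,X)\leq \lambda^{2(|\balpha|+1)}(1/k)^{2(|V(\balpha)|-1)}(|\balpha|+1)^{2|\balpha|}$, and trivially $1/\balpha!\leq 1$.

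Next I would group the surviving $\balpha$ by $(d,h)$ with $d=|\balpha|\in\{1,\dots,D\}$ and $|V(\balpha)|=d+1-h$. Since a connected multigraph on $v$ vertices has at least $v-1$ edges we get $h\geq 0$, and since $V(\balpha)$ contains the two distinct vertices $1$ and $2$ we get $|V(\balpha)|\geq 2$, i.e. $h\leq d-1$. By Lemma~\ref{lm: num-nonzero-alpha} there are at most $n^{d-h-1}d^{d+h}$ such $\balpha$, so the contribution of block $(d,h)$ is at most
\[
n^{d-h-1}\,d^{d+h}\,(d+1)^{2d}\,\lambda^{2(d+1)}\,k^{-2(d-h)}.
\]
The key (purely algebraic) observation is that this quantity equals $\frac{\lambda^2}{n}\big(d^2(d+1)^2\lambda^2\big)^{h}\big(d(d+1)^2\tfrac{n\lambda^2}{k^2}\big)^{d-h}$, which, because $d\leq D$, is bounded by $\frac{\lambda^2}{n}\big(D^2(D+1)^2\lambda^2\big)^{h}\big(D(D+1)^2\tfrac{n\lambda^2}{k^2}\big)^{d-h}$. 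Summing over $1\leq d\leq D$, $0\leq h\leq d-1$, and then enlarging the index set to $0\leq h$, $h\leq d\leq D$ (which only adds nonnegative terms, namely the phantom $d=h$ terms that correspond to no actual $\balpha$; the $(d,h)=(0,0)$ term of this enlarged double sum is exactly $\lambda^2/n$, which is precisely why the statement carries the $-\lambda^2/n$ correction), I obtain the first displayed inequality of the proposition.

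For the second bound I would substitute the SNR hypothesis $\lambda^2\leq \frac{r}{(D(D+1))^2}\min(1,k^2/n)$: the first factor in the minimum gives $D^2(D+1)^2\lambda^2\leq r<1$, and the second gives $D(D+1)^2\tfrac{n\lambda^2}{k^2}\leq r/D\leq r<1$. Hence both geometric series converge, with $\sum_{h\geq 0}\big(D^2(D+1)^2\lambda^2\big)^{h}\leq\frac{1}{1-r}$ and $\sum_{j\geq 0}\big(D(D+1)^2\tfrac{n\lambda^2}{k^2}\big)^{j}\leq\frac{1}{1-r}$, so the double sum is at most $\frac{1}{(1-r)^2}$. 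Combining with the first bound and simplifying $-\frac{\lambda^2}{n}+\frac{\lambda^2}{n(1-r)^2}=\frac{r(2-r)\lambda^2}{(1-r)^2 n}$ yields the stated estimate.

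The substantive work has already been done in Proposition~\ref{prop: kappa-property-SBM}(iii) and Lemma~\ref{lm: num-nonzero-alpha}; what is left here is essentially bookkeeping, and the only genuinely delicate point I anticipate is getting the index ranges right — in particular reconciling the constraint $h\leq d-1$ (forced by $|V(\balpha)|\geq 2$) with the diagonal $d=h$ terms that one deliberately adds to recover the clean geometric-series form, and carefully tracking how the $(0,0)$ term of that enlarged sum is the source of the $-\lambda^2/n$ term in the statement.
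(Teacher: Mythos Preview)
Your proposal is correct and follows essentially the same route as the paper: peel off $\kappa_0^2=\lambda^2/k^2$, invoke Proposition~\ref{prop: kappa-property-SBM}(i)--(iii) together with Lemma~\ref{lm: num-nonzero-alpha} to reduce to the double sum over $(d,h)$, use $d\leq D$, enlarge the index range to $0\leq h\leq d\leq D$ (the $(0,0)$ term accounting for the $-\lambda^2/n$), and then bound the two resulting geometric series under the SNR assumption. The only cosmetic difference is that the paper first factors each block as $\big(\tfrac{nd(d+1)^2\lambda^2}{k^2}\big)^d\big(\tfrac{dk^2}{n}\big)^h$ and then changes to your $(d-h,h)$ parametrization, which is algebraically identical to your direct factorization.
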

\begin{proof}
	First, we have $\kappa_{0}(x,X) = \bbE(x) = \lambda/k $. Then 
	\begin{equation*}
		\begin{split}
			\sum_{\balpha \in \bbN^N, 0 \leq |\balpha| \leq D }  \frac{\kappa_{\balpha}^2(x,X)}{\balpha!} &\leq \sum_{\balpha \in \bbN^N, 0 \leq |\balpha| \leq D } \kappa_{\balpha}^2(x,X) \\
			& \overset{\text{Proposition } \ref{prop: kappa-property-SBM} (i)(ii) }= \kappa_{0}^2(x,X) + \sum_{ \substack{ \balpha \in \bbN^N, 1 \leq |\balpha| \leq D, \\ \balpha \text{ connected }, 1 \in V(\balpha), 2 \in V(\balpha) } } \kappa_{\balpha}^2(x,X) \\
			& \overset{\text{Proposition } \ref{prop: kappa-property-SBM}(iii), \text{Lemma } \ref{lm: num-nonzero-alpha} } \leq \frac{\lambda^2}{k^2} + \sum_{d=1}^D \sum_{h=0}^{d-1} n^{d-h-1} d^{d+h} \left( \lambda^{d+1} (1/k)^{d-h} (d+1)^d  \right)^2 \\
			& =  \frac{\lambda^2}{k^2} + \frac{\lambda^2}{n} \sum_{d=1}^D \sum_{h=0}^{d-1} \left( \frac{nd(d+1)^2 \lambda^2}{k^2} \right)^d \left( \frac{dk^2}{n} \right)^h \\
			& \leq \frac{\lambda^2}{k^2} + \frac{\lambda^2}{n} \sum_{d=1}^D \sum_{h=0}^{d-1} \left( \frac{nD(D+1)^2 \lambda^2}{k^2} \right)^d \left( \frac{Dk^2}{n} \right)^h \\
			& \leq \frac{\lambda^2}{k^2} - \frac{\lambda^2}{n} + \frac{\lambda^2}{n} \sum_{d=0}^D \sum_{h=0}^{d} \left( \frac{nD(D+1)^2 \lambda^2}{k^2} \right)^d \left( \frac{Dk^2}{n} \right)^h\\
			& = \frac{\lambda^2}{k^2} - \frac{\lambda^2}{n} +\frac{\lambda^2}{n} \sum_{h=0}^d \left( D^2 (D+1)^2 \lambda^2 \right)^h \sum_{d=h}^D \left( D (D+1)^2 \frac{n \lambda^2}{k^2} \right)^{d-h}.
		\end{split}
	\end{equation*} This shows the first conclusion.
	
	For any $0 < r < 1$ and $D \geq 1$, if $\lambda^2 \leq \frac{r}{(D(D+1))^2} \min\left(1, \frac{k^2}{n} \right)$, by the above result, we have
	\begin{equation*}
		\begin{split}
			\sum_{\balpha \in \bbN^N, 0 \leq |\balpha| \leq D }  \frac{\kappa_{\balpha}^2(x,X)}{\balpha!} &\leq \frac{\lambda^2}{k^2} - \frac{\lambda^2}{n} +\frac{\lambda^2}{n} \sum_{h=0}^d \left( D^2 (D+1)^2 \lambda^2 \right)^h \sum_{d=h}^D \left( D (D+1)^2 \frac{n \lambda^2}{k^2} \right)^{d-h} \\
			& \leq \frac{\lambda^2}{k^2} - \frac{\lambda^2}{n} +\frac{\lambda^2}{n} \sum_{h=0}^d r^h \sum_{d=h}^D r^{d-h}\leq \frac{\lambda^2}{k^2} - \frac{\lambda^2}{n} + \frac{\lambda^2}{n} \frac{1}{(1-r)^2}\\
			& = \frac{\lambda^2}{k^2} + \frac{r(2-r)\lambda^2}{(1-r)^2n}.
		\end{split}
	\end{equation*}
	This finishes the proof of this proposition.
\end{proof}

\subsection{Proof of Theorem \ref{th: comp-limit-graphon-est} } \label{sec: proof-main-theorem}
\begin{proof}
	First, since $M$ is drawn uniformly at random from $\cM_{k,p,q}$, by symmetry, we have
	\begin{equation} \label{ineq: graphon-main-bound1}
		\begin{split}
			&\quad \inf_{\hM \in \bbR[A]^{n \times n}_{\leq D} } \bbE_{A, M \sim \bbP_{\SBM (p,q)}} [\ell(\hM, M)] \\
			&= \inf_{g \in \bbR[A]_{\leq D} } \bbE_{A, M \sim \bbP_{\SBM (p,q)}} [ ( g(A) - M_{12} )^2 ] \\
			 &= \inf_{g \in \bbR[A]_{\leq D} } \sum_{j=1}^k\bbE_{A, M \sim \bbP_{\SBM (p,q)}} [ ( g(A) - M_{12} )^2 | (z_M)_1 = j ] \bbP((z_M)_1 = j) \\
			 & = \inf_{g \in \bbR[A]_{\leq D} } \bbE_{A, M \sim \bbP_{\SBM (p,q)}} [ ( g(A) - M_{12} )^2 | (z_M)_1 = 1 ]\\
			 &= \inf_{g \in \bbR[A]_{\leq D} } \bbE_{A, M \sim \bbP_{\SBM (p,q)}'} [ ( g(A) - M_{12} )^2 ],
		\end{split}
	\end{equation} where $\bbP_{\SBM (p,q)}'$ is the restriction of $\bbP_{\SBM (p,q)}$ on $\cM_{k,p,q}$ such that $(z_M)_1 = 1$.
	
	The graphon estimation problem in SBM fits in the general binary observation model described in Section \ref{sec: low-degree-preliminary}. Thus we can apply the results in Proposition \ref{prop: schramm-wein-binary}. Following the notation in Section \ref{sec: low-degree-preliminary}, in our context, we have $x = M_{12}$, $X = \rmvec(M_{ij}: i < j)$ encodes the upper triangular entries of $M$ and $Y = \rmvec(A_{ij}: i < j)$ encodes the upper triangular entries of $A$. Thus $N = n(n-1)/2$ and the law of $X$ is supported on $[q,p]$. By Proposition \ref{prop: schramm-wein-binary}, we have
	\begin{equation}\label{ineq: graphon-main-bound2}
		\begin{split}
			\inf_{g \in \bbR[A]_{\leq D} } \bbE_{A, M \sim \bbP_{\SBM (p,q)}'} [ ( g(A) - M_{12} )^2 ] &= \bbE_{A, M \sim \bbP_{\SBM (p,q)}'}(M_{12}^2) - \Corr_{\leq D}^2\\
			& \geq \bbE_{A, M \sim \bbP_{\SBM (p,q)}'}(M_{12}^2) - \sum_{\balpha \in \{0,1 \}^N, 0 \leq |\balpha| \leq D }  \frac{\kappa_{\balpha}^2(M_{12}, X)}{( q (1-p) )^{|\balpha|}}
		\end{split}
	\end{equation} where $\kappa_{\balpha}(M_{12}, X)$ is recursively defined as 
	\begin{equation} \label{eq: cumulant-SBM-graphon}
		\begin{split}
			\kappa_0(M_{12}, X) &= \bbE_{A, M \sim \bbP_{\SBM (p,q)}'} [M_{12}] = p/k + (1-1/k)q = q + (p-q)/k;\\
			\kappa_{\balpha}(M_{12}, X) &= \bbE_{A, M \sim \bbP_{\SBM (p,q)}'}[M_{12} X^{\balpha} ] - \sum_{0 \leq \bbeta \lneq \balpha } \kappa_{\bbeta}(x, X) {\balpha \choose \bbeta } \bbE[X^{\balpha - \bbeta}]\\
			& \overset{(a)}= \bbE_{A, M \sim \bbP_{\SBM (p,q)}'}[M_{12} X^{\balpha} ] - \sum_{0 \leq \bbeta \lneq \balpha } \kappa_{\bbeta}(x, X)  \bbE[X^{\balpha - \bbeta}],\text{ for } \balpha \text{ such that } |\balpha| \geq 1
		\end{split}
	\end{equation} where in (a), we use the fact $\balpha \in \{0,1 \}^N$ so that ${\balpha \choose \bbeta }  = 1$.
	
	Directly computing $\kappa_{\balpha}(M_{12}, X)$ is complicated, here we do a transformation and let $\widebar{X} = (X-q)/\sqrt{q(1-p)}$, $\widebar{M}_{12} = (M_{12} - q)/\sqrt{q(1-p)}$. By the interpretation of $\kappa_{\balpha}(x, X)$ in \eqref{eq: kappa-cumulant-connection}, we have
	\begin{equation} \label{eq: kappa0-bound1}
	\begin{split}
		\kappa_0(\widebar{M}_{12}, \widebar{X}) &\overset{(a) }= \kappa_0\left(\frac{M_{12}}{ \sqrt{q(1-p)} }, X\right) - \frac{q}{\sqrt{q(1-p)}} \\
		& = \frac{1}{\sqrt{q(1-p)}} \kappa_0\left(M_{12}, X\right) - \frac{q}{\sqrt{q(1-p)}} \\
		& \overset{ \eqref{eq: cumulant-SBM-graphon} }= \frac{p-q}{k \sqrt{q(1-p)}},
	\end{split}
	\end{equation} where (a) is by Proposition \ref{prop: cumulant-scaling-prop} in Appendix \ref{sec: cumulant}.
	For any $\balpha$ such that $|\balpha| \geq 1$, by Proposition \ref{prop: cumulant-scaling-prop}, we have
	\begin{equation}\label{eq: kappa0-bound2}
		\kappa_{\balpha}(\widebar{M}_{12}, \widebar{X}) = \frac{1}{ (q(1-p) )^{(|\balpha | + 1)/2 } } \cdot \kappa_{\balpha}(M_{12}, X).
	\end{equation}
	
	By plugging \eqref{eq: kappa0-bound1} and \eqref{eq: kappa0-bound2} into \eqref{ineq: graphon-main-bound1} and \eqref{ineq: graphon-main-bound2}, we have
	\begin{equation} \label{ineq: graphon-lower-bound-arg1}
		\begin{split}
			&\inf_{g \in \bbR[A]_{\leq D} } \bbE_{A, M \sim \bbP_{\SBM (p,q)}'} [ ( g(A) - M_{12} )^2 ] \\
			&\geq \bbE_{A, M \sim \bbP_{\SBM (p,q)}'}(M_{12}^2) - \sum_{\balpha \in \{0,1 \}^N, 0 \leq |\balpha| \leq D }  \frac{\kappa_{\balpha}^2(M_{12}, X)}{( q (1-p) )^{|\balpha|}} \\
			& = q^2 + \frac{p^2 - q^2}{k} -  \kappa^2_0(M_{12}, X) - \sum_{\balpha \in \{0,1 \}^N, 0 \lneq |\balpha| \leq D }  \frac{\kappa_{\balpha}^2(M_{12}, X)}{( q (1-p) )^{|\balpha|}} \\
			& \overset{ \eqref{eq: cumulant-SBM-graphon}, \eqref{eq: kappa0-bound2} }= q^2 + \frac{p^2 - q^2}{k} - (q + (p-q)/k )^2 + q(1-p) \kappa^2_0(\widebar{M}_{12}, \widebar{X}) \\
			& \quad \quad \quad - \sum_{\balpha \in \{0,1 \}^N, 0 \leq |\balpha| \leq D }  q(1-p)\kappa_{\balpha}^2(\widebar{M}_{12}, \widebar{X}) \\
			& \overset{\eqref{eq: kappa0-bound1}}= \frac{(p-q)^2}{k} - q(1-p)\sum_{\balpha \in \{0,1 \}^N, 0 \leq |\balpha| \leq D }  \kappa_{\balpha}^2(\widebar{M}_{12}, \widebar{X}).
		\end{split}
	\end{equation}
	
	Recall that $Z_M$ is the membership matrix associated with $M$. Since $X$ represents the upper triangular entries of $q \1_{n} \1_{n}^\top + (p-q) Z_M $ where $\1_{n }$ represents an all $1$ vector, after the transformation, $\widebar{X}$ encodes upper triangular entries of $ \frac{(p-q) Z_M}{\sqrt{q(1-p)}}$ and $\widebar{M}_{12}$ is the first entry of $\widebar{X}$. 
	
	Notice that $\bbP_{\SBM (p,q)}'$ is exactly the uniform SBM prior with fixed first vertex defined in Definition \ref{def: uniform-prior-SBM} with $\lambda = \frac{(p-q)}{\sqrt{q(1-p)}}$, by Proposition \ref{prop: sum-kappa-bound-SBM}, we have
	\begin{equation}
	\begin{split}
		\inf_{\hM \in \bbR[A]^{n \times n}_{\leq D} } \bbE_{A, M \sim \bbP_{\SBM (p,q)}} [\ell(\hM, M)] &\overset{ \eqref{ineq: graphon-main-bound1} }=\inf_{g \in \bbR[A]_{\leq D} } \bbE_{A, M \sim \bbP_{\SBM (p,q)}'} [ ( g(A) - M_{12} )^2 ] \\
		& \overset{ \eqref{ineq: graphon-lower-bound-arg1} } \geq  \frac{(p-q)^2}{k} - q(1-p)\sum_{\balpha \in \{0,1 \}^N, 0 \leq |\balpha| \leq D }  \kappa_{\balpha}^2(\widebar{M}_{12}, \widebar{X}) \\
		& \overset{(a)}= \frac{(p-q)^2}{k} - q(1-p)\sum_{\balpha \in \{0,1 \}^N, 0 \leq |\balpha| \leq D }  \kappa_{\balpha}^2(\widebar{M}_{12}, \widebar{X})/\balpha! \\
		& \overset{ \text{Proposition } \ref{prop: sum-kappa-bound-SBM} }\geq \frac{(p-q)^2}{k} - q(1-p) \cdot \left( \frac{(p-q)^2}{k^2 q(1-p)} + \frac{r(2-r)(p-q)^2}{(1-r)^2 n q(1-p)}  \right) \\
		& = \frac{(p-q)^2}{k} - (p-q)^2 \left( \frac{1}{k^2} + \frac{r(2-r)}{(1-r)^2 n}  \right).
	\end{split}
	\end{equation} where (a) is because $\balpha \in \{0,1 \}^N$. This finishes the proof of this theorem.
\end{proof}

\subsection{Proof of Corollary \ref{coro: graphon-estimation-final-lower-bound} } \label{sec: proof-main-corollary}
\begin{proof}
	Since $k \geq 2$ and $n \geq k^2 \geq 2k$, by Theorem \ref{th: comp-limit-graphon-est} we have there exists a small enough $r > 0$ such that when $\frac{(p-q)^2}{q(1-p)} \leq \frac{r}{(D(D+1))^2} \min(1,\frac{k^2}{n}) $, we have
	\begin{equation} \label{ineq:coro-mid-ineq}
		\inf_{\hM \in \bbR[A]^{n \times n}_{\leq D} } \bbE_{A, M \sim \bbP_{\SBM (p,q)}} \ell(\hM, M) \geq c_r \frac{(p-q)^2}{k}
	\end{equation} for some constant $c_r > 0$ depends on $r$ only. 
	
	Then, we take $\epsilon \leq q \leq p \leq 1-\epsilon$ for some $\epsilon > 0$ such that $\frac{(p-q)^2}{q(1-p)} \geq c \frac{r}{(D(D+1))^2} \min(1,\frac{k^2}{n})$ for some $1> c > 0$, and we have
	\begin{equation*}
	\begin{split}
		\inf_{\hM \in \bbR[A]^{n \times n}_{\leq D} } \sup_{M \in \cM_k} \bbE( \ell(\hM,M) ) & \geq \inf_{\hM \in \bbR[A]^{n \times n}_{\leq D} } \bbE_{A, M \sim \bbP_{\SBM (p,q)}} \ell(\hM, M) \\
		&\overset{ \eqref{ineq:coro-mid-ineq} }\geq c_r \frac{(p-q)^2}{k} = c'_r \frac{r}{(D(D+1))^2} q(1-p) \left( \frac{k}{n} \wedge \frac{1}{k} \right) \\
		& \geq \frac{c}{D^4} \left( \frac{k}{n} \wedge \frac{1}{k} \right),
	\end{split}
	\end{equation*} where $c$ depends on $\epsilon$ and $r$ only.
\end{proof}

\vskip 0.2in
\bibliographystyle{apalike}
\bibliography{reference}

		\newpage
		\appendix
		
			\begin{center}
			{\LARGE Supplement to "Computational Lower Bounds for Graphon Estimation via Low-degree Polynomials"	
				
			}		
			\medskip
			
			{\large Yuetian Luo \quad and\quad Chao Gao}
			\medskip
		\end{center}
		
		In this supplement, we provide a table of contents and the rest of the technical proofs.
		\tableofcontents

\section{Two Natural Hypothesis Testing Problems Associated with Graphon Estimation/Biclustering} \label{sec: testing-problem-no-hardness}
In this section, we illustrate that two natural testing problems associated with graphon estimation do not have computational barriers. For simplicity, we focus on the additive Gaussian noise model, i.e., the biclustering setting in Section \ref{sec: biclustering}, while we believe similar results hold in the binary model as well. Suppose $n$ divides $k$ and $k$ is allowed to grow with respect to $n$. Let $z \in [k]^{n}$ be a balanced partition of $[n]$ with equal size in each community, i.e., $z^{-1}(t) = n/k$ for all $t \in [k]$, chosen uniformly at random from such partitions. Let $z'$ be independently drawn from the same distribution as $z$. Construct $Z \in \{0,1 \}^{n \times n} $ such that $Z_{ij} = \indi( z_i = z'_j )$ for any $i,j \in [n]$. Then we consider the following two testing problems:
\begin{equation} \label{eq: global-testing-problems}
	\begin{split}
		\text{(Problem 1)}& \quad H_0: Y = E \quad \text{ v.s. } \quad H_1: Y = \lambda Z + E, \\
		\text{(Problem 2)}& \quad H_0: Y = E \quad \text{ v.s. } \quad H_1: Y = \lambda (Z -\frac{1}{k} \1_n \1_n^\top)   + E ,
	\end{split}
\end{equation}  where $\lambda > 0$ is the signal strength, $\1_n \in \bbR^n$ is an all one vector and $E$ has i.i.d. $N(0,1)$ entries. 

Problem 1 and Problem 2 have the same null distribution, but under $H_1$ we have $\bbE (\sum_{i,j} Y_{ij} ) > 0$ in the first problem and $\bbE (\sum_{i,j} Y_{ij} ) = 0$ in the second problem. Both Problem 1 and Problem 2 have been considered in the literature and it turns out both of them do not exhibit statistical-computational gaps. In particular, the statistical separation rate of Problem 1 is $\lambda \asymp \frac{k}{n} $ (see Table 1 in \cite{dadon2023detection} with the parameter regime $m = n/k$, notice that the notation $k$ there is our notation $n/k$) and the statistical separation rate of Problem 2 is $\lambda  \asymp \sqrt{ \frac{k}{n}}$ (see Theorem 2 in \cite{banks2018information} and Lemma 14.8 in \cite{brennan2020reducibility} with the parameter regime $n = rK$ there and the discussion afterward). Moreover, for both problems, statistically optimal testing procedures can be computed within polynomial time. Thus, we cannot hope to obtain the computational hardness of biclustering estimation by considering the above two testing problems. However, it was conjectured in \cite{chen2016statistical,brennan2020reducibility} that there is still a statistical-computational gap for estimation/recovery in both problems and when $k \leq \sqrt{n}$, the conjectured computational threshold for estimation/recovery is $\lambda \asymp \frac{k}{\sqrt{n}}$ in our notation while the statistical threshold is $\lambda \asymp \sqrt{ \frac{k}{n} } $, see Conjecture 19 in \cite{chen2016statistical}, the discussion after Lemma 14.8 in \cite{brennan2020reducibility} and Table 1 in \cite{dadon2023detection}.

\section{More Background and Preliminaries for Low-degree Polynomials} \label{sec: preliminary-low-degree}

\subsection{Cumulant and its Properties} \label{sec: cumulant}
The key quantity $\kappa_{\balpha}$ is closely related to the cumulants. Here we provide a brief overview of cumulants and their basic properties, which will be useful in deducing some properties for $\kappa_{\balpha}$ in the graphon estimation problem. Most of the materials in this section are covered in \cite[Section 2.5]{schramm2020computational}. 

\begin{Definition}
	Let $X_1, \ldots, X_n$ be jointly distributed random variables. Their cumulant generating function is 
	\begin{equation*}
		K(t_1, \ldots, t_n) = \log \bbE \left[ \exp\left( \sum_{i=1}^n t_i X_i \right) \right],
	\end{equation*} and their joint cumulant is the quantity
	\begin{equation*}
		\kappa(X_1, \ldots, X_n) = \left( \left( \prod_{i=1}^n \frac{\partial}{\partial t_i} K(t_1,\ldots, t_n)  \right)  \right) \Big|_{t_1= \cdots = t_n = 0}.
	\end{equation*}
\end{Definition}

The following two properties of cumulant will be frequently used in our proofs.
\begin{Proposition} \label{prop: cumulant-independent-prop}
	If $a, b $ are two positive integers and $X_1, \ldots, X_a, Y_1, \ldots, Y_b$ are random variables with $\{X_i \}_{i \in [a]}$ independent from $\{Y_j \}_{j \in [b]}$, then
	\begin{equation*}
		\kappa(X_1,\ldots, X_a, Y_1, \ldots, Y_b) = 0.
	\end{equation*}
\end{Proposition}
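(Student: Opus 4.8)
The plan is to exploit the way independence interacts with the cumulant generating function. Write $t = (s_1,\dots,s_a,u_1,\dots,u_b)$, where the $s_i$ are the dummy variables paired with the $X_i$ and the $u_j$ those paired with the $Y_j$. By the definition in the excerpt,
\begin{equation*}
K(s,u) = \log \bbE\left[\exp\left(\sum_{i=1}^a s_i X_i + \sum_{j=1}^b u_j Y_j\right)\right].
\end{equation*}
First I would use independence of $\{X_i\}_{i\in[a]}$ from $\{Y_j\}_{j\in[b]}$ to factor the expectation: $\exp(\sum_i s_i X_i)$ and $\exp(\sum_j u_j Y_j)$ are functions of the two independent blocks, so their product has expectation equal to the product of expectations. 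Taking logarithms then gives the additive splitting
\begin{equation*}
K(s,u) = K_X(s) + K_Y(u), \qquad K_X(s) = \log\bbE\left[\exp\left(\sum_i s_i X_i\right)\right], \quad K_Y(u) = \log\bbE\left[\exp\left(\sum_j u_j Y_j\right)\right].
\end{equation*}

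Next I would observe that the joint cumulant $\kappa(X_1,\dots,X_a,Y_1,\dots,Y_b)$ is, by definition, the mixed partial derivative of $K$ with respect to \emph{every} variable $s_1,\dots,s_a,u_1,\dots,u_b$, evaluated at the origin. Since $a \ge 1$ and $b \ge 1$, this derivative includes at least one $\partial/\partial s_i$ and at least one $\partial/\partial u_j$. Applying $\partial/\partial u_j$ to $K_X(s)$ (which does not depend on $u$) gives $0$, and applying $\partial/\partial s_i$ to $K_Y(u)$ gives $0$. Hence the full mixed partial of $K = K_X(s) + K_Y(u)$ vanishes identically in $(s,u)$, and in particular at $s = u = 0$, so $\kappa(X_1,\dots,X_a,Y_1,\dots,Y_b) = 0$.

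The only point requiring a little care, and the closest thing to an obstacle, is the analytic justification: one should either assume the joint moment generating function is finite in a neighborhood of the origin (so that $K$ is real-analytic there and the derivatives make sense classically), or interpret $K$ and its derivatives formally at the level of power series in $(s,u)$ — in which case the additive splitting of the formal logarithm still holds and the argument goes through verbatim. Everything else is a one-line consequence of the product rule for independent expectations and the vanishing of derivatives of a function with respect to variables it does not contain.
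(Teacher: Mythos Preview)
Your argument is correct and is exactly the standard proof of this classical fact: under independence the moment generating function factors, so $K$ splits additively as $K_X(s)+K_Y(u)$, and any mixed partial derivative involving at least one $s_i$ and at least one $u_j$ annihilates both summands. Your remark about working formally with power series when the MGF need not exist is also appropriate.

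The paper does not actually supply its own proof of this proposition; it is stated as background in the appendix on cumulants and attributed to \cite[Section 2.5]{schramm2020computational}. So there is nothing to compare against, but the argument you give is precisely the one that appears in standard references and in Schramm--Wein.
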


\begin{Proposition}\label{prop: cumulant-scaling-prop}
	The joint cumulant is invariant under constant shifts and is scaled by constant multiplication. That is, if $X_1,\ldots, X_n$ are jointly-distributed random variables and $c$ is any constant, then
	\begin{equation*}
		\kappa(X_1 + c, X_2, \ldots, X_n) = \kappa(X_1, \ldots, X_n) + c \cdot \indi(n = 1),
	\end{equation*} and
	\begin{equation*}
		\kappa(cX_1 , X_2, \ldots, X_n) = c \cdot \kappa(X_1 , X_2, \ldots, X_n).
	\end{equation*}
\end{Proposition}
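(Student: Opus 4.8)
The plan is to argue directly from the definition of the joint cumulant via the cumulant generating function, tracking how rescaling or translating a single coordinate transforms this function. Write $K(t_1,\dots,t_n) = \log \bbE\left[\exp\left(\sum_{i=1}^n t_i X_i\right)\right]$, so that $\kappa(X_1,\dots,X_n) = \left(\prod_{i=1}^n \partial/\partial t_i\right) K \big|_{t_1=\cdots=t_n=0}$. Since the $X_i$ are assumed to have enough finite moments, $K$ is analytic near the origin and the order in which we apply the partial derivatives and evaluate at $t=0$ is immaterial; I would note this once at the start.

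For the multiplication identity, replacing $X_1$ by $cX_1$ produces the new generating function $\widetilde K(t_1,\dots,t_n) = \log \bbE\left[\exp\left(ct_1 X_1 + \sum_{i\geq 2} t_i X_i\right)\right] = K(ct_1, t_2,\dots,t_n)$, i.e. a simple rescaling of the first argument. By the chain rule $\partial_{t_1}\widetilde K(t) = c\,(\partial_1 K)(ct_1,t_2,\dots,t_n)$, while the derivatives $\partial_{t_2},\dots,\partial_{t_n}$ act in the obvious way; evaluating at $t=0$ yields $c\cdot\kappa(X_1,\dots,X_n)$, as claimed.

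For the shift identity, replacing $X_1$ by $X_1+c$ gives $\widetilde K(t) = \log\left(e^{ct_1}\,\bbE\left[\exp\left(\sum_i t_i X_i\right)\right]\right) = ct_1 + K(t)$; the crucial observation is that the modification is an additive term that is linear in $t_1$ and involves no other variable. Hence $\partial_{t_1}\widetilde K = c + \partial_{t_1} K$. When $n=1$, evaluating at $t_1=0$ gives $\kappa(X_1)+c$. When $n\geq 2$, applying any further derivative $\partial_{t_j}$ with $j\geq 2$ annihilates the constant $c$, so $\left(\prod_i \partial_{t_i}\right)\widetilde K = \left(\prod_i \partial_{t_i}\right)K$ and the cumulant is unchanged. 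These two cases are summarized precisely by the term $c\cdot\indi(n=1)$.

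I do not anticipate a genuine obstacle: both statements reduce to one-line manipulations of $K$ followed by differentiation. The only point needing a little care is the bookkeeping in the shift identity — verifying that the extra summand $ct_1$ survives differentiation exactly when $n=1$ and vanishes otherwise — together with the (standard) justification, via finiteness of the relevant moments, that $K$ is smooth at the origin so the iterated partials and the evaluation at $t=0$ may be taken in any order.
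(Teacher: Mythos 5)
Your proof is correct. The paper itself offers no proof of this proposition --- it is stated as a standard property of cumulants, with a pointer to Section 2.5 of Schramm and Wein (2022) --- and your argument via the cumulant generating function ($\widetilde K(t)=K(ct_1,t_2,\dots,t_n)$ for scaling, $\widetilde K(t)=ct_1+K(t)$ for shifts, with the linear term $ct_1$ surviving differentiation exactly when $n=1$) is precisely the standard derivation that the cited reference relies on. The only cosmetic caveat is that "enough finite moments" should be read as guaranteeing the relevant mixed partials of $K$ at the origin (or that $K$ is interpreted as a formal power series in $t$), since finiteness of all moments alone does not make the moment generating function finite in a neighborhood of zero; in the paper's applications the variables are bounded, so this is immaterial.
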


\subsection{Computational Lower Bounds for Estimation Under the Additive Gaussian Observation Model} \label{sec: comp-lower-bound-gaussian-model}
For the general additive Gaussian noise model, \cite{schramm2020computational} also provided a low-degree polynomial lower bound for estimation for low-degree polynomials. Given any positive integer $N$, suppose we observe $Y = X + E \in \bbR^N$, where $X \in \bbR^N$ is drawn from an arbitrary (but known) prior and $E$ has i.i.d. $N(0,1)$ entries independent from $X$. The goal is again to estimate a scalar quantity $x \in \bbR$, which is a function of $X$. Then we have a similar result as in Proposition \ref{prop: schramm-wein-binary}.
\begin{Proposition}[\cite{schramm2020computational}] \label{prop: schramm-wein-gaussian}
	In the general additive  Gaussian noise model described above, for any $D \geq 1$, we have 
	\begin{equation*}
	\inf_{g \in \bbR[Y]_{\leq D}} \bbE_{(x, Y) \sim \bbP} (g(Y) - x )^2 = \bbE(x^2) - \Corr_{\leq D}^2,
\end{equation*} where $\Corr_{\leq D}$ is defined as in the same way as in \eqref{def: Corr} and satisfies
\begin{equation*}
	\Corr_{\leq D}^2 \leq \sum_{ \balpha \in \bbN^N, 0 \leq | \balpha | \leq D } \frac{\kappa_{\balpha}^2(x, X)}{ \balpha! }.
\end{equation*} Here $\kappa_{\balpha}(x, X)$ is also defined recursively in the same way as in \eqref{eq: kappa-recursive-relation}. 
\end{Proposition}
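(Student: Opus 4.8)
The plan is to mirror the proof of Proposition~\ref{prop: schramm-wein-binary}. It has two parts: a soft Hilbert-space step producing the identity $\inf_{f}\bbE(f(Y)-x)^2=\bbE(x^2)-\Corr_{\leq D}^2$, and a channel-specific step bounding $\Corr_{\leq D}^2$ by a weighted sum of squared cumulants; the only change from the binary case is that the weight attached to $\kappa_{\balpha}^2(x,X)$ becomes $1/\balpha!$, which is the Gaussian-channel normalization of Hermite polynomials replacing the Bernoulli factor $(\tau_0(1-\tau_1))^{-|\balpha|}$. Throughout I assume, as is implicit in the statement, that the prior of $X$ has enough moments for $x$ and all degree-$\leq D$ polynomials in $Y$ to lie in $L^2(\bbP)$.

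For the first step I would argue exactly as in the binary case: $\bbR[Y]_{\leq D}$ is a finite-dimensional, hence closed, subspace of $L^2(\bbP_Y)$; with $g(Y):=\bbE[x\mid Y]$ and $\Pi$ the orthogonal projection onto $\bbR[Y]_{\leq D}$, the Pythagorean identity together with the tower property gives $\bbE(f(Y)-x)^2=\bbE(g(Y)-x)^2+\|f-g\|^2$ for every admissible $f$ and $\bbE(g(Y)-x)^2=\bbE(x^2)-\|g\|^2$, so $\inf_f\bbE(f(Y)-x)^2=\bbE(x^2)-\|\Pi g\|^2$; and $\|\Pi g\|^2=\sup_{f\neq 0}\langle\Pi g,f\rangle^2/\|f\|^2=\sup_f\bbE[xf(Y)]^2/\bbE[f(Y)^2]=\Corr_{\leq D}^2$, using $\langle g,f\rangle=\bbE[xf(Y)]$ for $Y$-measurable $f$. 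There is no real difficulty here.

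The substance is the cumulant bound, and my plan is to expand each $f\in\bbR[Y]_{\leq D}$ in the probabilists' Hermite basis, $f=\sum_{|\balpha|\leq D}\widehat f(\balpha)\,h_{\balpha}/\sqrt{\balpha!}$ with $h_{\balpha}=\prod_i h_{\balpha_i}$, and then process the numerator and denominator of the ratio defining $\Corr_{\leq D}$ separately. For the numerator: conditional independence of the $E_i$ given $X$ and the identity $\bbE_{\varepsilon\sim N(0,1)}h_m(a+\varepsilon)=a^m$ give $\bbE[x\,h_{\balpha}(Y)]=\bbE[xX^{\balpha}]$, and the defining recursion~\eqref{eq: kappa-recursive-relation} rearranges to $\bbE[xX^{\balpha}]=\sum_{\bbeta\leq\balpha}\binom{\balpha}{\bbeta}\kappa_{\bbeta}(x,X)\,\bbE[X^{\balpha-\bbeta}]$, so swapping the order of summation yields $\bbE[xf(Y)]=\sum_{|\bbeta|\leq D}\kappa_{\bbeta}(x,X)\,u_{\bbeta}$ with $u_{\bbeta}:=\sum_{\balpha\geq\bbeta}\frac{\widehat f(\balpha)}{\sqrt{\balpha!}}\binom{\balpha}{\bbeta}\bbE[X^{\balpha-\bbeta}]$. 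For the denominator: conditioning on $X$, the Hermite shift identity $h_{\balpha}(X+E)=\sum_{\bbeta\leq\balpha}\binom{\balpha}{\bbeta}X^{\balpha-\bbeta}h_{\bbeta}(E)$ and orthonormality of $\{h_{\bbeta}/\sqrt{\bbeta!}\}$ under the standard Gaussian give $\bbE[f(Y)^2]=\sum_{\bbeta}\bbeta!\,\bbE_X[C_{\bbeta}(X)^2]$ with $C_{\bbeta}(X)=\sum_{\balpha\geq\bbeta}\frac{\widehat f(\balpha)}{\sqrt{\balpha!}}\binom{\balpha}{\bbeta}X^{\balpha-\bbeta}$, whence Jensen gives $\bbE_X[C_{\bbeta}(X)^2]\geq(\bbE_XC_{\bbeta}(X))^2=u_{\bbeta}^2$ and $\bbE[f(Y)^2]\geq\sum_{\bbeta}\bbeta!\,u_{\bbeta}^2$. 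A single Cauchy--Schwarz then finishes it,
\[
\frac{\bbE[xf(Y)]^2}{\bbE[f(Y)^2]}\ \leq\ \frac{\bigl(\sum_{\bbeta}\kappa_{\bbeta}(x,X)\,u_{\bbeta}\bigr)^2}{\sum_{\bbeta}\bbeta!\,u_{\bbeta}^2}\ \leq\ \sum_{|\bbeta|\leq D}\frac{\kappa_{\bbeta}^2(x,X)}{\bbeta!},
\]
and supremizing over $f$ gives the bound on $\Corr_{\leq D}^2$ (the degenerate case $\sum_{\bbeta}\bbeta!\,u_{\bbeta}^2=0$ forces $\bbE[xf(Y)]=0$ and is trivial).

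The main obstacle I anticipate is the \emph{lower} bound on $\bbE[f(Y)^2]$: it must be expressed in terms of precisely the linear forms $u_{\bbeta}$ that already govern the numerator, otherwise the Cauchy--Schwarz step does not land on $\sum_{\bbeta}\kappa_{\bbeta}^2/\bbeta!$. The trick that unlocks this is to expand $f$ in Hermite polynomials of the \emph{observation} $Y$ for the numerator but of the \emph{noise} $E$ (conditionally on $X$) for the denominator, and to notice that the recursion~\eqref{eq: kappa-recursive-relation} defining the $\kappa_{\balpha}$ is exactly the binomial / Hermite-shift transform linking those two expansions. After that everything is bookkeeping; the only care needed is to track the moment assumptions on $X$ that make all the expectations above finite, which are the regularity conditions under which this proposition is stated in \cite{schramm2020computational}.
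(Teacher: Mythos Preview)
The paper does not prove this proposition; it is quoted directly from \cite{schramm2020computational} without proof, so there is no ``paper's own proof'' to compare against. Your argument is correct and is essentially the proof given in \cite{schramm2020computational}: the Hilbert-space identity for $\Corr_{\leq D}^2$, the Hermite expansion together with the Gaussian moment identity $\bbE_{\varepsilon\sim N(0,1)}h_m(a+\varepsilon)=a^m$ to rewrite $\bbE[xh_{\balpha}(Y)]=\bbE[xX^{\balpha}]$, the Hermite shift identity to decompose $\bbE[f(Y)^2]$ conditionally on $X$, followed by Jensen and Cauchy--Schwarz, is exactly their route.
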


\section{Additional Proofs in Section \ref{sec: computational-limit-graphon-estimation} }
\subsection{Guarantee for $\hM_{\mean}$ when $k \geq \sqrt{n}$} \label{app: guarantee-of-M-mean}
	For notation simplicity, let us denote $\widebar{A} = \sum_{1 \leq i < j \leq n} A_{ij}/{n \choose 2} $ and $\widebar{M} =\sum_{1 \leq i < j \leq n} M_{ij}/{n \choose 2} $. Then for $1 \leq i < j \leq n$, we have there exists $C_1, C_2 > 0$ such that
	\begin{equation*}
		\begin{split}
			\bbE[( \widebar{A} - M_{ij} )^2] &= \bbE[( \widebar{A} - \widebar{M} )^2] + \bbE[( \widebar{M} - M_{ij} )^2] \\
			&\leq \left\{ \begin{array}{cc}
				\frac{C_1}{n^2} + C_2 (p-q)^2 & \text{ if } M_{ij} = p,\\
				\frac{C_1}{n^2} + \frac{C_2}{k^2} & \text{ if } M_{ij} = q.
			\end{array}  \right.
		\end{split}
	\end{equation*} Thus,
	\begin{equation*}
		\begin{split}
			& \quad \frac{1}{{n \choose 2}}\bbE \sum_{1 \leq i < j \leq n}  [( \widebar{A} - M_{ij} )^2] \\
			&\leq  \frac{1}{{n \choose 2}} \left( (\frac{C_1}{n^2} + C_2 (p-q)^2) \frac{(n/k - 1) n}{2} + (\frac{C_1}{n^2} + \frac{C_2}{k^2}) ( \frac{n(n-1)}{2} -  \frac{(n/k - 1) n}{2}  )  \right)  \\
			& \leq \frac{C'}{n^2} + \frac{C''}{k} + \frac{C'''}{k^2} \leq C/k.
		\end{split}
	\end{equation*}

\subsection{Proof of Theorem \ref{th:low-degree-up-bound}} \label{sec:low-degree-upbd-lemma}

Without loss of generality, we assume $p \geq q$, while the proof still go through if $p < q$.	Given the membership vector $z_M$, let us also introduce $\Pi_{M} \in \{0,1 \}^{n \times k}$ where $(\Pi_M)_{ij} = 1$ if $(z_M)_i = j$ and $(\Pi_M)_{ij} = 0$ otherwise. With this notation, we can write $M = q \1_n \1_n^\top + (p-q) \Pi_{M} \Pi_{M}^\top - p I_n$. Let 
	\begin{equation} \label{eq:equiva-rela}
		\begin{split} 
		&\widebar{M} = M + pI_n, \quad \tM = \widebar{M} - q \1_n \1_n^\top = (p-q) \Pi_{M} \Pi_{M}^\top, \\
			&\widebar{A} = A + \Lambda, \quad \widebar{Z} = \widebar{A} - \widebar{M} = \tA - \tM. 
		\end{split}
	\end{equation} Notice that $\tM$ is a rank $k$ matrix and let us denotes its SVD as $U^* \Sigma^* U^{* \top}$, where $U^* \in \mathbb{O}_{n,k}$ ($\mathbb{O}_{n,k}$ denotes the set of $n$-by-$k$ column orthonormal matrices) and the diagonal entries of $\Sigma^*$ encode the number of member in each individual communities. 
Then
	\begin{equation*}
	\begin{split}
		\ell(\hM, M) = \frac{1}{ {n \choose 2} } \sum_{i < j} (\hM_{ij} - M_{ij})^2 \leq \frac{1}{ {n \choose 2} } \| \hM - \widebar{M}\|_\F^2 = \frac{1}{ {n \choose 2} } \| \tA^{t_1} B W_{t_2} - \tM\|_\F^2.
	\end{split}
	\end{equation*}
So to prove the result, we just need to show $\| \tA^{t_1} B W_{t_2} - \tM\|_\F^2 \leq C n (\log n + k) \log^2 n$.

Suppose $\tA$ has SVD $\tA = U_1 \Sigma_1 V_1^\top + U_2 \Sigma_2 V_2^\top$, where $U_1,V_1 \in \mathbb{O}_{n,k}, U_2,V_2 \in \mathbb{O}_{n,n-k}$, $\Sigma_1 \in \bbR^{k \times k}$ and $\Sigma_2 \in \bbR^{(n-k) \times (n-k)}$. By standard concentration results, we have the following events hold simultaneously with probability $1 - n^{-\bar{C}}$ (where the randomness is taken with respect to $M,A,\Lambda$ and $B$):
\begin{itemize}
	\item (A1) $\forall j \in [k], |\sum_{i=1}^n \indi ((z_M)_i = j) - \frac{n}{k} | \leq C_1 \sqrt{ \frac{n}{k} \log n }$, i.e., $|\Sigma^*_{jj} - \frac{(p-q)n}{k} | \leq C_1 (p-q)\sqrt{ \frac{n}{k} \log n }$;
	\item (A2) $\|\widebar{Z}\| \leq C_2 \sqrt{n}$;
	\item (A3) $c_3 \sqrt{k} \leq \sigma_k( V_1^\top B) \leq \sigma_1(V_1^\top B) \leq C_3 \sqrt{k} $ and $\|V_2^\top B\| \leq C_3 \sqrt{n}$,
\end{itemize} where $\sigma_i(\cdot)$ denotes the $i$-th largest singular value of a given matrix and $C_1, C_2, C_3 > 0$. Here (A1) is by the Bernstein inequality for the sum of Bernoulli random variables; (A2) is by the concentration of spectrum of the noise matrix in sparse random graphs, e.g., see \cite{feige2005spectral} and Lemma 2 in \cite{xu2018rates} with $\rho = 1$; (A3) is by the standard concentration result for the spectrum of Gaussian random matrices, see \cite{vershynin2010introduction}. Given (A1) and (A2) hold, we also have 
\begin{itemize}
	\item (A4) $\|U_1 \Sigma_1 V_1^\top - \tM\|_\F^2 \leq C_4 k n$;
	\item (A5) $\|\Sigma_2\| \leq \min_{X \text{ is of rank bounded by } k} \|\tA - X\| \leq \|\tA - \tM\| = \|\barZ\| \leq C_2 \sqrt{n}$;
	\item (A6) $\|\Sigma_1\| \leq \|\tM\| + \|\barZ\| \leq (p-q)\frac{n}{k} ( 1 + C_1 \sqrt{ \frac{k}{n} \log n } ) + C_2 \sqrt{n}$,
\end{itemize} where (A4) is by a similar argument as in Proposition \ref{prop: biclustering-upper-bound}.

The rest of the argument will be a deterministic argument given the above (A1)-(A6) events hold. It consists of two parts: statistical error analysis and computational error analysis.
\begin{itemize}
	\item  In the statistical error analysis, we want to show the estimation error is small if we can solve the least squares problem exactly. Specifically, suppose $\hW = \argmin_{W \in \bbR^{r \times n}} \|\tA^{t_1} B W - \tA\|_\F^2$, we want to show
	\begin{equation} \label{ineq:stat-error}
		\|\tA^{t_1} B \hW - \tM\|^2_\F \lesssim nk,
	\end{equation} where the notation $a_n \lesssim b_n$ where it means that $a_n \leq C b_n$ for some constant $C >0$ independent of $n$.
	\item In the computational error analysis, we want to show GD approximates the least-squares solution well, i.e., 
	\begin{equation}\label{ineq:comp-error}
		\|\tA^{t_1} B W_{t_2} - \tA^{t_1} B \hW\|_\F^2 \lesssim n (\log n +k) \log^2 n.
	\end{equation}
\end{itemize} Once we show \eqref{ineq:stat-error} and \eqref{ineq:comp-error}, they imply $\|\tA^{t_1} B W_{t_2} - \tM\|_\F^2 \lesssim n (\log n +k) \log^2 n$ and we are done. 
One challenge in performing our analysis is that since we do not assume the spectrum gap, we have to consider both scenarios (with a spectrum gap and without a spectrum gap) at the same time. 
 \vskip.3cm
{\noindent \bf (Statistical error analysis).} Since $\tA = \tM + \barZ$, by Lemma \ref{lm:ls-bound} in Appendix \ref{sec:low-degree-upbd-lemma}, we have for any $W \in \bbR^{r \times n}$,
\begin{equation*}
	\begin{split}
		\|\tA^{t_1} B \hW - \tM\|^2_\F \leq \frac{20}{9} \|\tA^{t_1} B W - \tM\|^2_\F + \frac{110}{9}  \| \barZ_{\max(r)} \|_\F^2.
	\end{split}
\end{equation*} Since $\| \barZ_{\max(r)} \|_\F^2 \leq r \|\barZ\|^2 \lesssim nk$ by (A2), we just need to show $\|\tA^{t_1} B W - \tM\|^2_\F \lesssim nk$. We consider two different cases:
\begin{itemize}
	\item (Case 1: $\frac{(p-q)n}{k} \left( 1 - C_1 \sqrt{ \frac{k}{n} \log n } \right) \geq 5 C_2 \sqrt{n}$) This is the high SNR regime, we have a spectrum gap. Specifically, 
	\begin{equation} \label{ineq:simga1-lower-bound}
		\sigma_k(\Sigma_1 ) \geq \sigma_k(\Sigma^*) - \|\barZ\| \overset{\text{(A1)}}\geq \frac{(p-q)n}{k} \left( 1 - C_1 \sqrt{ \frac{k}{n} \log n } \right) - C_2 \sqrt{n} \geq 4 C_2 \sqrt{n}.  
	\end{equation} 
	
	Then we take $W = (V_1^\top B)^\dagger \Sigma_1^{-t_1 + 1}V_1^\top$. So
	\begin{equation*}
		\begin{split}
			\|\tA^{t_1} B \hW - \tM\|^2_\F &= \|U_1 \Sigma^{t_1}_1 V_1^\top B W + U_2 \Sigma^{t_1}_2 V_2^\top B W - \tM\|_\F^2 \\
			& = \|U_1 \Sigma_1 V_1^\top - \tM + U_2 \Sigma^{t_1}_2 V_2^\top B (V_1^\top B)^\dagger \Sigma_1^{-t_1 + 1}V_1^\top\|_\F^2 \\
			& \leq 2 \|U_1 \Sigma_1 V_1^\top - \tM\|_\F^2 + 2 \| U_2 \Sigma^{t_1}_2 V_2^\top B (V_1^\top B)^\dagger \Sigma_1^{-t_1 + 1}V_1^\top\|_\F^2.
		\end{split}
	\end{equation*} Notice that $\|U_1 \Sigma_1 V_1^\top - \tM\|_\F^2  \lesssim nk$ by (A4) and 
	\begin{equation*}
		\begin{split}
			& \| U_2 \Sigma^{t_1}_2 V_2^\top B (V_1^\top B)^\dagger \Sigma_1^{-t_1 + 1}V_1^\top\|_\F^2 \\
			\leq & k \| U_2 \Sigma^{t_1}_2 V_2^\top B (V_1^\top B)^\dagger \Sigma_1^{-t_1 + 1}V_1^\top\|^2\\
			\leq & k \left( \|\Sigma_2\|^{t_1} \|V_2^\top B\| \frac{1}{\sigma_k(V_1^\top B) \sigma_k(\Sigma_1)^{t_1 -1} } \right)^2 \\
			\overset{(a)}\leq & k \left( \frac{(C_2\sqrt{n})^{t_1} C_3 \sqrt{n} }{c_3\sqrt{k} (4C_2 \sqrt{n} )^{t_1 - 1}  } \right)^2 \leq \left(\frac{ C_2 C_3 n}{c_3} \right)^2 (\frac{1}{16} )^{t_1 - 1} \\
			\overset{(b)}\lesssim & nk,
		\end{split}
	\end{equation*} where (a) is due to (A5),(A3),\eqref{ineq:simga1-lower-bound}; (b) is because $t_1 = C' \log n$ for large $C' > 0$.

	\item (Case 2: $\frac{(p-q)n}{k} \left( 1 - C_1 \sqrt{ \frac{k}{n} \log n } \right) \leq 5 C_2 \sqrt{n}$) This is the low SNR regime, we do not have a spectrum gap. Since $n \geq C k \log^3 n$, we have $$\|\Sigma^*\| \leq \frac{(p-q)n}{k} \left( 1 + C_1 \sqrt{ \frac{k}{n} \log n } \right) \leq 6 C_2 \sqrt{n}.$$
 We take $W = 0$, so 
	\begin{equation*}
		\begin{split}
			\|\tA^{t_1} B W - \tM\|^2_\F = \| \tM\|^2_\F \leq k \|\Sigma^*\|^2 \lesssim nk.
		\end{split}
	\end{equation*}
\end{itemize} So we have finished the proof for the statistical error analysis.

 \vskip.3cm
{\noindent \bf (Computational error analysis).} Suppose we denote the SVD of $\tA^{t_1} B$ as follows
\begin{equation*}
	\begin{split}
		\tA^{t_1} B = P S Q^\top = [P_1 \, P_2] \left[\begin{array}{cc}
			S_1 & \0 \\
			\0 & S_2
		\end{array} \right] \left[\begin{array}{c}
			Q_1^\top \\
			Q_2^\top
		\end{array} \right],
	\end{split}
\end{equation*} where $P \in \bbO_{n,r}, Q \in \bbO_{r,r}, S \in \bbR^{r \times r}$. Without loss of generality, we assume $\tA^{t_1} B$ has rank $r$, while the same analysis still go through when $\tA^{t_1} B$ has rank less than $r$, then we just need to use its economic SVD. Let $P_1 S_1 Q_1^\top$ consists the top $k$ components of $P S Q^\top$ and $P_2 S_2 Q_2^\top$ consists the rest of the components. First, we can write down a close formula for the least-squares solution $\hW = (B^\top \tA^{2t_1} B )^{-1} B^\top \tA^{t_1} \tA = Q S^{-1} P^{\top} \tA$. By \eqref{eq:ls-contraction} in Lemma \ref{lm:ls-bound}, we have
\begin{equation} \label{eq:ls-error-iter}
	\begin{split}
		&\tA^{t_1} B (W_{l+1} -\hW) = (I_n - \eta \tA^{ t_1} B B^\top \tA^{ t_1}  ) \tA^{t_1} B (W_{l} -\hW), \\
		\Longleftrightarrow & P S Q^\top (W_{l+1} -\hW) = (I_n - \eta P S^2 P^\top ) P S Q^\top (W_{l} -\hW),\\
		 \Longleftrightarrow & SQ^\top  (W_{l+1} -\hW) = (I_r - \eta S^2 )S Q^\top (W_{l} -\hW). 
	\end{split}
\end{equation} Notice that 
\begin{equation} \label{ineq:AB-spec-bound}
\begin{split}
	\|\tA^{t_1} B\| & = \|U_1 \Sigma_1^{t_1} V_1^\top B + U_2 \Sigma_2^{t_1} V_2^\top B\| \leq \|\Sigma_1\|^{t_1} \|V_1^\top B\| + \|\Sigma_2\|^{t_1} \|V_2^\top B\| \\
	& \overset{(a)}\leq \left( \frac{(p-q)n}{k} \left( 1 + C_1 \sqrt{ \frac{k}{n} \log n } \right) + C_2 \sqrt{n} \right)^{t_1} C_3 \sqrt{k} + (C_2 \sqrt{n})^{t_1} C_3 \sqrt{n}\\
	& \leq \left( \frac{(p-q)n}{k} + C_2 \sqrt{n} \right)^{t_1}\left(   1 + C_1 \sqrt{ \frac{k}{n} \log n } \right)^{t_1} C_3 \sqrt{k} + (C_2 \sqrt{n})^{t_1} C_3 \sqrt{n} \\
	& \overset{(b)}\lesssim \left( \frac{(p-q)n}{k} + C_2 \sqrt{n} \right)^{t_1} \sqrt{k}  \vee (C_3 \sqrt{n})^{t_1 + 1}.
\end{split}
\end{equation} where (a) is due to (A6),(A2),(A5); (b) is because $n \gtrsim k \log^3 n$. Notice that $\|S\|^2 = \|\tA^{t_1} B\|^2$, so by our choice of stepsize $\eta$ in GD, we have $\0 \preccurlyeq I_r - \eta S^2 \preccurlyeq I_r$ always holds.

 Now let us consider two different cases:
\begin{itemize}
	\item (Case 1: $ \frac{(p-q) n}{k} \leq \tC \sqrt{n} \log n$ for some large $\tC > 0$.) This is the low SNR regime, we do not have a spectrum gap, so the GD can converge slowly. Notice that by the SNR condition, we have $\|\Sigma_1\| \lesssim \sqrt{n} \log n$. Since $I_r - \eta S^2 \preccurlyeq I_r$, we have 
	\begin{equation*}
		\begin{split}
			\|\tA^{t_1} B (W_{t_2} -\hW)\|_\F^2 &\leq \|\tA^{t_1} B (W_{t_2-1} -\hW)\|_\F^2 \leq \cdots \leq \|\tA^{t_1} B (W_{0} -\hW)\|_\F^2 \\
			& \overset{W_0 = \0}= \|P S Q^\top  Q S^{-1} P^{\top} \tA\|_\F^2 \leq r \|\Sigma_1\|^2 \lesssim nk \log^2 n.
		\end{split}
	\end{equation*}
	
	\item (Case 2: $\frac{(p-q) n}{k} \geq \tC \sqrt{n} \log n$.) This is the high SNR regime, we have a spectrum gap. In this case, we would expect GD to converge fast in the top components, so a more refined control on the least squares error is needed. By \eqref{eq:ls-error-iter}, we have $\|\tA^{t_1} B (W_{l+1} -\hW)\|_\F^2 = \|SQ^\top  (W_{l+1} -\hW)\|_\F^2$ and 
	\begin{equation*}
		\begin{split} &SQ^\top  (W_{l+1} -\hW) = (I_r - \eta S^2 )S Q^\top (W_{l} -\hW)\\
			\Longleftrightarrow &\left[\begin{array}{cc}
			S_1 & \0 \\
			\0 & S_2
		\end{array} \right] \left[\begin{array}{c}
			Q_1^\top \\
			Q_2^\top
		\end{array} \right] (W_{l+1} -\hW) = \left[\begin{array}{cc}
			I_k - \eta S^2_1 & \0 \\
			\0 & I_{r-k} - \eta S^2_2
		\end{array} \right] \left[\begin{array}{c}
			Q_1^\top \\
			Q_2^\top
		\end{array} \right] (W_{l} -\hW) \\
		\Longleftrightarrow & \left[\begin{array}{c}
			S_1 Q_1^\top (W_{l+1} -\hW) \\
			S_2 Q_2^\top (W_{l+1} -\hW)
		\end{array} \right] =\left[\begin{array}{c}
			(I_k - \eta S^2_1)S_1 Q_1^\top (W_{l} -\hW) \\
			(I_{r-k} - \eta S^2_2)S_2 Q_2^\top (W_{l} -\hW)
		\end{array} \right].
		\end{split}
	\end{equation*} So we have $$\|\tA^{t_1} B (W_{l+1} -\hW)\|_\F^2 = \|SQ^\top  (W_{l+1} -\hW)\|_\F^2 = \|S_1Q_1^\top  (W_{l+1} -\hW)\|_\F^2 + \|S_2Q_2^\top  (W_{l+1} -\hW)\|_\F^2.$$
	Next, we will show that ({\bf Part I}) $\|S_1Q_1^\top  (W_{l+1} -\hW)\|_\F^2$ decay geometrically fast, so as long as $t_2 = C' \log n$, this term will be small and ({\bf Part II}) $\|S_2Q_2^\top  (W_{l+1} -\hW)\|_\F^2$ will not decay geometrically fast but can be well controlled.
	\vskip.3cm
{\bf (Part I)}  Since $\tC$ is large enough, so by \eqref{ineq:AB-spec-bound}, we have
	\begin{equation*}
		\begin{split}
			\sigma_1(S_1) = \|\tA^{t_1} B\| \leq C  \sqrt{k} \left( \frac{(p-q)n}{k} \right)^{t_1} \left( 1+ \frac{C_2 \sqrt{n} t_1 }{\tC n \log n} \right) \vee (C_3 \sqrt{n})^{t_1 + 1}\leq  C  \sqrt{k} \left( \frac{(p-q)n}{k} \right)^{t_1},
		\end{split}
	\end{equation*} here we can choose $\tC$ to be large enough so that $\tC n \log n$ dominates $C_2 \sqrt{n} t_1$ and the second inequality is because when $t_1 \gtrsim \log n$, the first term will dominate the second term. At the same time,
	\begin{equation*}
		\begin{split}
			\sigma_k(S_1) &=\sigma_k(\tA^{t_1} B)  = \sigma_k( U_1 \Sigma_1^{t_1} V_1^\top B + U_2 \Sigma_2^{t_1} V_2^\top B ) \geq \sigma_k ( U_1 \Sigma_1^{t_1} V_1^\top B ) - \|U_2 \Sigma_2^{t_1} V_2^\top B\|\\
	& \overset{(a)}\geq \left( \frac{(p-q)n}{k} \left( 1 - C_1 \sqrt{ \frac{k}{n} \log n } \right) - C_2 \sqrt{n} \right)^{t_1} C_3 \sqrt{k} - (C_2 \sqrt{n})^{t_1} C_3 \sqrt{n}\\
	& \overset{(b)}\geq \sqrt{k} \left( \frac{(p-q)n}{k} \right)^{t_1} \left( 1- C'_1 t_1\sqrt{ \frac{k}{n} \log n }  - \frac{c'\sqrt{n}t_1}{\frac{(p-q)n}{k}} \right) - (C_2 \sqrt{n})^{t_1} C_3 \sqrt{n} \\
	& \overset{(c)}\geq c \sqrt{k} \left( \frac{(p-q)n}{k} \right)^{t_1} 
		\end{split}
	\end{equation*}where (a) is due to (A1),(A2),(A5); in (b), we use the fact $n \gtrsim k \log^3 n$ and we choose $\tC$ to be large enough so that $\tC n \log n$ dominates $C_2 \sqrt{n} t_1$ so that $\frac{\sqrt{n}t_1}{\frac{(p-q)n}{k}} \leq c' < 1$; (c) is because when $t_1 \gtrsim \log n$, we have $(C_2 \sqrt{n})^{t_1} C_3 \sqrt{n}$ is dominated by the first term.
	
	In summary, in this regime, we have $\|\tA^{t_1} B\| \asymp \sigma_k(\tA^{t_1} B)$. Moreover, by choosing large enough constant $C''$ in $\eta$, we have $\eta$ is on the scale of $1/\|\tA^{t_1} B\|^2$ and $c_1 I_k \preccurlyeq I_k - \eta S^2_1 \preccurlyeq c_2 I_k$ for some $0< c_1 < c_2 < 1$. So $$\|S_1 Q_1^\top (W_{t_2} -\hW)\|^2_\F \leq c^{t_2} \|S_1 Q_1^\top (W_{0} -\hW)\|^2_\F,$$
	for some $0 < c < 1$. In addition, 
	\begin{equation*}
		\begin{split}
			\|S_1 Q_1^\top (W_{0} -\hW)\|^2_\F = \|S_1 Q_1^\top Q S^{-1} P^\top  \tA\|^2_\F = \|P_1^\top  \tA\|^2_\F \leq \|\tA\|^2_\F \leq n^2,
		\end{split}
	\end{equation*} where the last inequality is because each entry of $\tA$ is bounded by $1$ by construction. So as long as $t_2 = C' \log n$, we have $\|S_1 Q_1^\top (W_{t_2} -\hW)\|^2_\F \lesssim nk $.

		\vskip.3cm
{\bf (Part II)}  By the choice of stepsize, we always have contraction, so 
\begin{equation*}
\begin{split}
	\|S_2Q_2^\top  (W_{t_2} -\hW)\|_\F^2 \leq \|S_2Q_2^\top  (W_{t_2-1} -\hW)\|_\F^2  &\leq \cdots \leq \|S_2Q_2^\top  (W_{0} -\hW)\|_\F^2 =\| P_2^\top  \tA\|^2_\F \\
	& \leq \| P_2^\top U_1 \Sigma_1 V_1^\top + P_2^\top U_2 \Sigma_2 V_2^\top\|^2_\F \\
	& \leq 2 \|P_2^\top U_1 \Sigma_1 V_1^\top \|_\F^2 + 2\| P_2^\top U_2 \Sigma_2 V_2^\top\|^2_\F.
\end{split}
\end{equation*} Notice that $\| P_2^\top U_2 \Sigma_2 V_2^\top\|^2_\F \leq (r-k) \|P_2^\top U_2 \Sigma_2 V_2^\top\|^2 \leq (r-k) \|\Sigma_2\|^2 \lesssim nk $ by (A5). So if we can show $\|P_2^\top U_1 \Sigma_1 V_1^\top \|_\F^2 \lesssim n (\log n +k) \log^2 n$, we are done. Let $P_{1\perp} \in \bbO_{n,n-k}$ be the orthogonal complement of $P_1$, then we have
\begin{equation} \label{ineq:last-projection-bound}
	\begin{split}
		\|P_2^\top U_1 \Sigma_1 V_1^\top \|_\F^2 \leq \|P_{1\perp}^\top U_1 \Sigma_1 V_1^\top \|_\F^2 \leq \|P_{1\perp}^\top U_1\|^2 \|\Sigma_1\|_\F^2
	\end{split}
\end{equation} From (A6) and the SNR condition $\frac{(p-q) n}{k} \geq \tC \sqrt{n} \log n$ we considered here, we have $\|\Sigma_1\|_\F^2 \leq k\|\Sigma_1\|^2  \lesssim k \left( \frac{(p-q)n}{k} \right)^2$. Next, we need to bound $\|P_{1\perp}^\top U_1\|$. To this end, we introduce an ideal matrix $\tA^{t_1}_{\text{ideal}} =  \left( \frac{(p-q)n}{k} \right)^{t_1} U_1 V_1^\top$. Notice that the left singular subspace of $\tA^{t_1}_{\text{ideal}} B$ is exactly the one spanned by $U_1$ since $r \geq k$. By Wedin's perturbation bound (the version we use is Theorem 5 in \cite{luo2020schatten}) 
\begin{equation} \label{ineq:subspace-perturb}
	\begin{split}
		\|P_{1\perp}^\top U_1\| \leq \frac{2 \| (\tA^{t_1} - \tA^{t_1}_{\text{ideal}}) B \|}{\sigma_k( \tA^{t_1}_{\text{ideal}} B )}.
	\end{split}
\end{equation} It is easy to see $\sigma_k( \tA^{t_1}_{\text{ideal}} B ) \geq c_3 \left(\frac{(p-q)n}{k} \right)^{t_1} \sqrt{k} $ by (A3). At the same time,
\begin{equation} \label{ineq:noise-bound}
	\begin{split}
		 &\| (\tA^{t_1} - \tA^{t_1}_{\text{ideal}}) B \| \leq \left\|U_1 \left(\Sigma_1^{t_1} - \left(\frac{(p-q)n}{k} \right)^{t_1} I_k \right) V_1^\top B  \right\| + \|U_2 \Sigma_2^{t_1} V_2^\top B\| \\
		\overset{\text{(A3),(A5)}} \leq & C_3 \sqrt{k} \left\| \Sigma_1^{t_1} - \left( \frac{(p-q)n}{k} \right)^{t_1} I_k \right\| + (C_2 \sqrt{n})^{t_1} C_3 \sqrt{n} \\
		\overset{\text{(A1)}}\leq & C_3 \sqrt{k} \left( \frac{(p-q)n}{k} \right)^{t_1} \left[\left(  1 + C_1 \sqrt{\frac{k \log n}{n}} + \frac{C_2 \sqrt{n}}{\frac{(p-q)n}{k}}  \right)^{t_1} - 1 \right]  + (C_2 \sqrt{n})^{t_1} C_3 \sqrt{n}\\
		\overset{(a)}\leq & C \sqrt{k} \left( \frac{(p-q)n}{k} \right)^{t_1} \left( C_1 \sqrt{\frac{k \log n}{n}} t_1 + \frac{C_2 \sqrt{n} t_1}{\frac{(p-q)n}{k}} \right) + (C_2 \sqrt{n})^{t_1} C_3 \sqrt{n}\\
		\overset{(b)}\lesssim &  \sqrt{k} \left( \frac{(p-q)n}{k} \right)^{t_1} \left(\frac{\sqrt{n} t_1}{\frac{(p-q)n}{k}} +  \sqrt{\frac{k \log n}{n}} t_1 \right) 
	\end{split}
\end{equation} where (a) is because $n \gtrsim k \log^3 n$ and we choose $\tC$ to be large enough so that given $0<x <1 \leq  t$ and $xt < c < 1$, we have $(1 + x)^t \leq 1 + C'xt$ for all $x < c/t$; (b) is because when $t_1 = C' \log n$, we have the first term dominates the second term.

By plugging \eqref{ineq:noise-bound} into \eqref{ineq:subspace-perturb}, we have
\begin{equation*}
	\|P_{1\perp}^\top U_1\| \lesssim \frac{\sqrt{n} t_1}{\frac{(p-q)n}{k}} + \sqrt{\frac{k \log n}{n}} t_1 ,
\end{equation*} and combining it with \eqref{ineq:last-projection-bound}, we have
\begin{equation*}
\begin{split}
	&\|P_2^\top U_1 \Sigma_1 V_1^\top \|_\F^2 \leq \|P_{1\perp}^\top U_1\|^2 \|\Sigma_1\|_\F^2 \\
	\lesssim & k \left( \frac{(p-q)n}{k} \right)^2 \times \left( \frac{\sqrt{n} t_1}{\frac{(p-q)n}{k}} \right)^2 + k \left( \frac{(p-q)n}{k} \right)^2 \times \frac{k \log n t_1^2}{n}  \\
	\lesssim & n (k + \log n) \log^2 n.
\end{split}
\end{equation*}
\end{itemize} This finishes the proof of Theorem \ref{th:low-degree-up-bound}.

\subsection{Additional lemmas for Theorem \ref{th:low-degree-up-bound}}
\begin{Lemma}[Least-squares Bound] \label{lm:ls-bound}
	Consider the least-squares problem $\min_{W}\|Y W - X\|_\F^2$ where $Y \in \bbR^{n_1 \times r}, W \in \bbR^{r \times n_2}$ and $X \in \bbR^{n_1 \times n_2}$. Suppose $X$ can be decomposed as $X = M + Z$ and $\hW \in \argmin_{W} \|Y W - X\|_\F^2$, then for any $W \in \bbR^{r \times n_2}$, we have
	\begin{equation} \label{ineq:ls-bound}
		\|Y\hW - M\|_\F^2 \leq \frac{20}{9} \|Y W - M\|_\F^2 + \frac{110}{9} \| Z_{\max(r)} \|_\F^2,
	\end{equation} where for any matrix $(\cdot)$, $(\cdot)_{\max(r)}$ denotes the top-$r$ truncated SVD of it. 
	
Furthermore, if we consider gradient descent with stepsize $\eta$ for solving the problem, i.e.,
	\begin{equation} \label{eq:GD-iterate}
		W_{t+1} = W_t - \eta Y^\top (YW_t - X), \,\, \forall t \geq 0.
	\end{equation} Then we have
	\begin{equation} \label{eq:ls-contraction}
		Y (W_{t+1} - \hW) = (I_{n_1} - \eta Y Y^\top ) Y (W_t - \hW), \quad \forall t \geq 0.
	\end{equation} 
\end{Lemma}
\begin{proof}
Since $\hW$ minimizes the objective, we have $\|Y \hW - X\|_\F^2 \leq \|Y W - X\|_\F^2$ for any $W \in \bbR^{r \times n_2}$. By plugging in $X = M + Z$ and rearranging terms, the previous condition is equivalent to
\begin{equation} \label{ineq:ls-bound-equi}
	\|Y\hW - M\|_\F^2 \leq \|YW - M\|_\F^2 + 2 \langle Y(\hW - W) , Z \rangle.
\end{equation}
Then
\begin{equation} \label{ineq:ls-bound-1}
	\begin{split}
		2\langle Y(\hW - W) , Z \rangle \overset{(a)}
		\leq &  2\|Y(\hW - W)\|_\F \|Z_{\max(r)}\|_\F \\
		\overset{(b)}\leq & 2(\|Y\hW - M\|_\F +\|YW - M\|_\F )  \|Z_{\max(r)}\|_\F\\
		\overset{(c)}\leq & ( \frac{1}{10} \|Y\hW - M\|^2_\F + 10\|Z_{\max(r)}\|^2_\F ) + (\|YW - M\|^2_\F + \|Z_{\max(r)}\|^2_\F),
	\end{split}
\end{equation} where (a) is by Lemma 3 of \cite{luo2020schatten}; (b) is by the triangle inequality; (c) is because $a^2 + b^2 \geq 2|ab|$.

By plugging \eqref{ineq:ls-bound-1} into \eqref{ineq:ls-bound-equi}, we have
\begin{equation*}
	\frac{9}{10} \|Y\hW - M\|_\F^2 \leq 2\|YW - M\|_\F^2 + 11\|Z_{\max(r)}\|^2_\F,
\end{equation*} which implies \eqref{ineq:ls-bound}.

	Now, we prove the second statement. The first order optimality condition yields $Y^\top (Y \hW - X) = 0$. By plugging it into the GD update formula, we get
	\begin{equation*}
		\begin{split}
			&W_{t+1} = W_t - \eta Y^\top (YW_t - Y \hW) \\
		\Leftrightarrow	& Y(W_{t+1} - \hW) = Y(W_t - \hW) - \eta Y Y^\top Y(W_t - \hW)\\
		\Leftrightarrow & \eqref{eq:ls-contraction}
		\end{split}
	\end{equation*}
\end{proof}

\section{Proofs in Section \ref{sec: nonparametric-graphon-est} }\label{sec: proof-nonparametric-graphon}

\subsection{Proof of Theorem \ref{th: computational-lower-bound-nonparametric-graphon}  }
	We are going to reduce the problem to the graphon estimation problem in the stochastic block model. First, we are going to construct a class of graphons that mimics the class $\cM_k$. The construction has also been used in \cite{gao2015rate} to show the statistical lower bound for nonparametric graphon estimation. To construct a function $f \in \cH_\gamma(L)$, we need the following smooth function $K(x)$ that is infinitely differentiable,
	\begin{equation*}
		K(x) = C_K \exp( - \frac{1}{1-64x^2} ) \indi( |x| < 1/8 ),
	\end{equation*}  where $C_K$ is a constant such that $\int K(x) dx = 1$. The function $K$ is a positive symmetric mollifier, based on which we define the following function $\psi(x) = \int_{-3/8}^{3/8}  K(x-y) dy$.
	The function $\psi(x)$ is called a smooth cutoff function. The support of $\psi(x)$ is $(-1/2,1/2)$. Since $K(x)$ is supported on $(-1/8,1/8)$ and the value of its integral is $1$, $\psi(x)$ is $1$ on the interval $[-1/4,1/4]$. More importantly, the smoothness property of $K(x)$ is inherited by $\psi(x)$, i.e., $\psi(x)$ is also infinitely differentiable on $(-1/2,1/2)$. Given small $0< \epsilon < 1/2$, and $\epsilon \leq q < p \leq 1-\epsilon$, let us define the symmetric matrix $Q^{qp} \in [0,1]^{k \times k}$ as follows: $Q^{qp}_{uu} = p$ for all $u \in [k]$ and $Q^{qp}_{uv} = Q^{qp}_{vu} = q$ for all $1\leq u < v \leq k$. Now define 
	\begin{equation} \label{def: f-qp-definition}
		f_{qp}(x,y) = \sum_{a,b \in [k]} \left( Q^{qp}_{ab} - q \right) \psi(kx - a + \frac{1}{2} ) \psi(ky -b + \frac{1}{2} ) + q.
	\end{equation} The following proposition gives the condition when the function defined in \eqref{def: f-qp-definition} belongs to $\cF_\gamma$.
\begin{Proposition}\label{prop:smooth-f}
	For any $\gamma > 0$. Suppose $p,q,k$ satisfies $(p-q)k^\gamma \leq c_{L,\gamma}$ for a small enough constant $c_{L,\gamma}>0$ depending on $L,\gamma$ only. Then we have $f_{qp}(x,y) \in \cF_{\gamma}(L)$.
\end{Proposition} The proof of this proposition is provided in the following Appendix \ref{sec:proof-smooth-f}. Now, let us define a new function class
	\begin{equation}
		\widebar{\cF}_{\gamma}(L) = \{f_{qp}: \epsilon \leq q \leq p \leq 1-\epsilon, p-q \leq c_{L,\gamma}/k^{\gamma} \}. 
	\end{equation} By construction and Proposition \ref{prop:smooth-f}, we have $\widebar{\cF}_{\gamma}(L) \subseteq \cF_{\gamma}(L) $. We also let
\begin{equation*}
\begin{split}
	\widebar{\cM}_{k}(n') = \Big\{& M = (M_{ij}) \in [0,1]^{  n' \times n'}: M_{ii} = 0, M_{ij} = M_{ji} = Q^{qp}_{z_i z_j} \text{ for } i \neq j\\
	& \text{for some }\epsilon \leq q < p \leq 1-\epsilon, p-q \leq c_{L,\gamma}/k^\gamma \text{ and } z \in [k]^{n'} \Big\}.
\end{split}
\end{equation*}

	The definition of $f_{qp}$ implies that for any $a, b \in [k]$,
	\begin{equation} \label{eq: connection-to-SBM}
		f_{qp}(x,y) \equiv Q_{ab}^{qp}, \text{  when } (x,y) \in \left[ \frac{a-3/4}{k}, \frac{a-1/4}{k} \right] \times \left[ \frac{b-3/4}{k}, \frac{b-1/4}{k} \right].
	\end{equation} Therefore, in a sub-domain, $f_{qp}$ is a piecewise constant function. To be specific, define
	\begin{equation*}
		I = \left( \bigcup_{a=1}^k \left[ \frac{(a-3/4)n}{k}, \frac{(a-1/4)n}{k} \right]  \right) \bigcap [n].
	\end{equation*} The values of $f_{qp}(i/n,j/n)$ on $(i,j) \in I \times I$ form a stochastic block model and $|I| \geq n/4$. Recall that $\Pi_n$ is the set of permutations on $[n]$. Define a subset by $\Pi_n' = \{\pi \in \Pi_n: \pi(i) = i \text{ for } i \in [n] \setminus I \}$. In other words, any $\pi \in \Pi_n'$ can be viewed as a permutation on $I$. Note that for any permutation $\pi \in \Pi_n'$, one valid distribution choice for $\bbP_\xi$ is
	\begin{equation*}
		\bbP_{\pi}\left( (\xi_1, \ldots, \xi_n) = (\pi(1)/n, \ldots, \pi(n)/n) \right) = 1.
	\end{equation*}

	 Then we have
	 \begin{equation} \label{ineq: nonparametric-graphon-ineq1}
	 	\begin{split}
	 		& \quad \inf_{\hM \in \bbR[A]^{n \times n}_{\leq D}} \sup_{f \in \cF_\gamma (L)} \sup_{\bbP_\xi} \bbE \left( \ell(\widehat{M}, M_f) \right) \\
	 		 &\geq \inf_{\hM \in \bbR[A]^{n \times n}_{\leq D}} \sup_{f \in \widebar{\cF}_{\gamma}(L)} \sup_{\bbP_\xi} \bbE \left( \ell(\widehat{M}, M_f) \right) \\
	 		& \geq  \inf_{\hM \in \bbR[A]^{n \times n}_{\leq D}} \sup_{f \in \widebar{\cF}_{\gamma}(L)} \sup_{\pi \in \Pi_n'} \bbE \left( \frac{1}{{n \choose 2}} \sum_{1 \leq i < j \leq n}\left(\widehat{M}_{( \pi(i), \pi(j) )}- (M_f)_{( \pi(i), \pi(j) )}\right)^2 \right) \\
	 		&  \geq c\inf_{\hM \in \bbR[A]^{n \times n}_{\leq D}} \sup_{f \in \widebar{\cF}_{\gamma}(L)} \sup_{\pi \in \Pi_n'} \bbE \left( \frac{1}{{|I| \choose 2}} \sum_{\substack{1 \leq i < j \leq n\\ i,j \in I}}\left(\widehat{M}_{( \pi(i), \pi(j) )}- (M_f)_{( \pi(i), \pi(j) )}\right)^2 \right),
	 	\end{split}
	 \end{equation} where in the last inequality, we use the condition $|I| \geq n/4$.
	 Notice that in the sub-domain $I \times I$, the quantity at the end of \eqref{ineq: nonparametric-graphon-ineq1} can be viewed as an SBM graphon estimation problem.
	 
	 By the above interpretation, to further proceed, we now need to specify $q,p$ and the number of communities $k$ in approximating $\gamma$-smooth graphon with SBM. Given any $0 < r < 1$, choose $ \epsilon \leq q < p \leq  1-\epsilon $ satisfying $p-q =  \frac{c}{D(D+1)} k/\sqrt{n} $ for small enough $c > 0$ such that the condition $\frac{(p-q)^2}{q(1-p)} \leq \frac{r}{(D(D+1))^2} \frac{k^2}{|I|} $ holds. In addition, to simultaneously guarantee $p-q \leq c_{L,\gamma}/k^{\gamma}$ so that $\widebar{\cF}_{\gamma}(L) \subseteq \cF_{\gamma}(L)$, we require $k \leq \left(c_{L,\gamma}\frac{D(D+1)}{c}\right)^{ \frac{1}{\gamma+1} } n^{\frac{1}{2(\gamma+1)} }$. 	 
	
	Then by \eqref{ineq: nonparametric-graphon-ineq1} and its interpretation, we can apply Theorem \ref{th: comp-limit-graphon-est} in the sub-domain $I \times I$ with $k$ communities and $|I|$ nodes and the condition $\frac{(p-q)^2}{q(1-p)} \leq \frac{r}{(D(D+1))^2} \frac{k^2}{|I|} $ being active, 
	\begin{equation*}
		\begin{split}
			& \inf_{\hM \in \bbR[A]^{n \times n}_{\leq D}} \sup_{f \in \cF_\gamma (L)} \sup_{\bbP_\xi} \bbE \left( \ell(\widehat{f}, f) \right)\\
			 \overset{ \eqref{ineq: nonparametric-graphon-ineq1} }\geq & c\inf_{\hM \in \bbR[A]^{n \times n}_{\leq D}} \sup_{f \in \widebar{\cF}_{\gamma}(L)} \sup_{\pi \in \Pi_n'} \bbE \left( \frac{1}{{|I| \choose 2}} \sum_{\substack{1 \leq i < j \leq n\\ i,j \in I}}\left(\widehat{M}_{( \pi(i), \pi(j) )}- (M_f)_{( \pi(i), \pi(j) )}\right)^2 \right)\\
			 \geq &  c\inf_{\hM \in \bbR[A]_{\leq D}^{|I| \times |I|} }\sup_{M \in \widebar{\cM}_k(|I|)} \bbE\left( \frac{1}{{|I| \choose 2}} \ell( \hM, (M_f)_{I \times I} )  \right)  \\
			 \overset{ \text{Theorem }\ref{th: comp-limit-graphon-est} }\geq & c' (p-q)^2/k  \overset{(a)}= c' \frac{k}{(D(D+1))^2 n} \overset{(b)}= c' n^{ -(2\gamma +1)/(2 \gamma+2) }/D^{4- \frac{2}{\gamma+1}} \geq c' n^{ -(2\gamma +1)/(2 \gamma+2) }/D^{4},
		\end{split}
	\end{equation*} where (a) is due to the choice of $p-q$ and (b) is due to the fact we want to maximize the lower bound given the $k \leq \left(c_{L,\gamma}\frac{D(D+1)}{c}\right)^{ \frac{1}{\gamma+1} } n^{\frac{1}{2(\gamma+1)} }$ constraint, so we choose $k = \left(c_{L,\gamma}\frac{D(D+1)}{c}\right)^{ \frac{1}{\gamma+1} } n^{\frac{1}{2(\gamma+1)} }$. Notice that here $c'$ only depends on $\gamma$ and $L$. This finishes the proof of this theorem.

\subsection{Proof of Proposition \ref{prop:smooth-f} } \label{sec:proof-smooth-f}
	Since $\psi(x)$ is infinitely differentiable, for any finite positive integer $m$, we have $\max_{j \leq m}|\nabla_j \psi(x)| \leq c_{m}$ for some $c_{m} > 0$ for all $x \in [-1/2,1/2]$. In addition, the derivative of $ \nabla_m \psi(x)$ vanishes at $(-\infty, -1/2]$ and $[1/2, \infty)$ at any order. 
	
	Recall  
	\begin{equation*}
		f_{qp}(x,y) = \sum_{a,b \in [k]} \left( Q^{qp}_{ab} - q \right) \psi(kx - a + \frac{1}{2} ) \psi(ky -b + \frac{1}{2} ) + q.
	\end{equation*} So for any nonnegative integer $j,l$, we have
	\begin{equation*}
		\begin{split}
			\nabla_{jl} f_{qp}(x,y) = \sum_{a,b \in [k]} k^{j + l} \left( Q^{qp}_{ab} - q \right) \nabla_j \psi(kx - a + \frac{1}{2} ) \nabla_l \psi(ky -b + \frac{1}{2} ).
		\end{split}
	\end{equation*} Notice that since $ \nabla_m \psi(x)$ vanishes at $(-\infty, -1/2]$ and $[1/2, \infty)$ at any order, each sum term in $f_{qp}(x,y)$ and $\nabla_{jl} f(qp)(x,y)$ is non-zero only when $a \in (kx, kx + 1)$ and $b \in (ky, ky+1)$.

	 Given any $x, y \in [0,1]$, let us define
	\begin{equation*}
		\begin{split}
			a_{x} = \begin{cases}
			1, & \text{ if } x \in [0,1/k], \\
				\lceil kx \rceil, & \text{ if } x \in (1/k,1],
			\end{cases} \quad 
			b_{y} = \begin{cases}
			1, & \text{ if } y \in [0,1/k], \\
				\lceil ky \rceil, & \text{ if } y \in (1/k,1].
			\end{cases}  
		\end{split}
	\end{equation*} So we can simplify $f_{qp}(x,y)$ and $\nabla_{jl} f_{qp}(x,y)$ as follows:
	\begin{equation} \label{eq:f-simple}
		\begin{split}
			f_{qp}(x,y) &= \left( Q^{qp}_{a_x b_y } - q \right) \psi(kx - a_x + \frac{1}{2} ) \psi(ky -b_y + \frac{1}{2} ) + q\\
			\nabla_{jl} f_{qp}(x,y) &=k^{j + l} \left( Q^{qp}_{a_x b_y} - q \right) \nabla_j \psi(kx - a_x + \frac{1}{2} ) \nabla_l \psi(ky -b_y + \frac{1}{2} ).
		\end{split}
	\end{equation}
	\vskip.5cm
	{\noindent \bf Case 1: show $f_{qp} \in \cF_{\gamma}(L)$ when $\gamma \in (0,1)$.} Let us denote $\cD = [0,1] \times [0,1]$. In this case,
	\begin{equation*}
		\begin{split}
			\|f_{qp}\|_{\cH_{\gamma}} = \sup_{x,y \in \cD} |f_{qp}(x,y)| + \sup_{(x,y) \neq (x',y') \in \cD} \frac{|f_{qp}(x,y) - f_{qp}(x',y') |}{(|x - x'| + |y - y'|)^{\gamma}}.
		\end{split}
	\end{equation*} First, it is easy to check $\sup_{x,y \in \cD} |f_{qp}(x,y)| \leq 1$ by \eqref{eq:f-simple}. In addition 
	\begin{equation*}
		\begin{split}
			& |f_{qp}(x,y) - f_{qp}(x',y') | \\
			= & \left|  ( Q^{qp}_{a_x b_y } - q ) \psi(kx - a_x + \frac{1}{2} ) \psi(ky -b_y + \frac{1}{2} ) - (Q^{qp}_{a_{x'} b_{y'} } - q)  \psi(kx' - a_{x'} + \frac{1}{2} ) \psi(ky' -b_{y'} + \frac{1}{2} ) \right|.
		\end{split}
	\end{equation*} 
	We discuss how to bound the above quantity under different scenarios. The first scenario is that $a_x \neq b_y$ and $a_{x'} \neq b_{y'}$, then $|f_{qp}(x,y) - f_{qp}(x',y') | = 0$ since $Q^{qp}_{a_x b_y } = Q^{qp}_{a_{x'} b_{y'} } = q$. The second scenario is that we have $a_x = b_y$ or $a_{x'} = b_{y'}$, one of them holds. Without loss of generality, let us consider $a_{x'} = b_{y'} = 1$ and $a_x > b_y$. In this scenario, we have
	\begin{equation*}
		\begin{split}
			 |f_{qp}(x,y) - f_{qp}(x',y') | =  (p-q)  \left|  \psi(kx - a_x + \frac{1}{2} ) \psi(ky -b_y + \frac{1}{2} ) \right|.
		\end{split}
	\end{equation*} To bound the ratio $\frac{(p-q)  \left|  \psi(kx - a_x + \frac{1}{2} ) \psi(ky -b_y + \frac{1}{2} ) \right|}{(|x - x'| + |y - y'|)^{\gamma}}$, it is enough to consider $(x,y)$ so that $y = y'$ and $a_{x} = 2$, otherwise the ratio can only be smaller. So 
	\begin{equation*}
		\begin{split}
			&\sup_{(x,y) \neq (x',y') \in \cD} \frac{(p-q)  \left|  \psi(kx - a_x + \frac{1}{2} ) \psi(ky -b_y + \frac{1}{2} ) \right|}{(|x - x'| + |y - y'|)^{\gamma}}\\
			 \leq & \sup_{ x' \in [0,1/k], x \in (1/k,2/k]} \frac{(p-q)  \left|  \psi(kx - a_x + \frac{1}{2} ) \right|}{(|x - x'|)^{\gamma}} = \sup_{ x' \in [0,1/k], x \in (1/k,2/k]} \frac{(p-q)  \left|  \psi(kx - \frac{3}{2} ) \right|}{(|x - x'|)^{\gamma}}\\
			 = & \sup_{ x' \in [0,1/k], x \in (1/k,2/k]} \frac{(p-q)  \left|  \psi(kx - \frac{3}{2} ) - \psi(k\times 1/k - \frac{3}{2}) \right|}{(|x - x'|)^{\gamma}}\\
			 \leq & \sup_{ x' \in [0,1/k], x \in (1/k,2/k]} \frac{c_1(p-q)  k |x - \frac{1}{k} |}{(|x - x'|)^{\gamma}} \leq c_1(p-q)  k |x - 1/k|^{1-\gamma} \leq c_1 (p-q) k (1/k)^{1-\gamma}\\
			 \leq & c_1 (p-q) k^\gamma, 
		\end{split}
	\end{equation*} where $c_1$ denotes the Lipschitz constant of $\psi(x)$
	
	The third scenario is that $a_x = b_y$ and $a_{x'} = b_{y'}$. In this case, we have	\begin{equation*}
		\begin{split}
			& |f_{qp}(x,y) - f_{qp}(x',y') | \\
			= &  (p-q)\left|  \psi(kx - a_x + \frac{1}{2} ) \psi(ky -b_y + \frac{1}{2} ) -  \psi(kx' - a_{x'} + \frac{1}{2} ) \psi(ky' -b_{y'} + \frac{1}{2} ) \right| \\
			\leq & (p-q) \left|  \psi(kx - a_x + \frac{1}{2} ) \psi(ky -b_y + \frac{1}{2} ) - \psi(kx' - a_{x'} + \frac{1}{2} )\psi(ky -b_y + \frac{1}{2} ) \right| \\
			& + (p-q) \left| \psi(kx' - a_{x'} + \frac{1}{2} )\psi(ky -b_y + \frac{1}{2} ) -  \psi(kx' - a_{x'} + \frac{1}{2} ) \psi(ky' -b_{y'} + \frac{1}{2} ) \right|\\
			\leq & (p-q) \left|  \psi(kx - a_x + \frac{1}{2} )  - \psi(kx' - a_{x'} + \frac{1}{2} ) \right| + (p-q) \left| \psi(ky -b_y + \frac{1}{2} ) -  \psi(ky' -b_{y'} + \frac{1}{2} ) \right|.
		\end{split}
	\end{equation*} 
	
	Let us now bound $\left|  \psi(kx - a_x + \frac{1}{2} )  - \psi(kx' - a_{x'} + \frac{1}{2} ) \right|$. Without loss of generality, let us assume $x \in [0,1/k]$ and $a_x = 1$. 
	\begin{itemize}
		\item (Type I) If $x' \in [0,1/k]$ and $a_{x'} = 1$, then $\left|  \psi(kx - a_x + \frac{1}{2} )  - \psi(kx' - a_{x'} + \frac{1}{2} ) \right| \leq c_{1} k |x- x'|$.
		\item (Type II) If $x' \in (1/k,2/k]$, $x' - x \leq 1/k$ and $a_{x'} = 2$, then 
		\begin{equation*}
		\begin{split}
			\left|  \psi(kx - a_x + \frac{1}{2} )  - \psi(kx' - a_{x'} + \frac{1}{2} ) \right| &= \left|  \psi(kx - \frac{1}{2} )  - \psi(kx' - \frac{3}{2} ) \right| \\
			& \overset{(a)}= \left|  \psi(kx - \frac{1}{2} )  - \psi(\frac{3}{2} - kx' ) \right|\\ & \leq c_1|k(x+x') -2| \overset{(b)}\leq c_1 k(x' - x) = c_1 k|x' - x|
		\end{split}
		\end{equation*} where (a) is because $\psi(x)$ is symmetric around zero; (b) is because $-k(x'-x) \leq k(x+x') -2 \leq k(x'-x)$.
		\item (Type III) If $x' \in (1/k, 1]$ and $x' - x \geq 1/k$. We can find $x'' := x' + (a_x - a_{x'})/k = a_x /k + x' - \lceil x'k\rceil/k  \in [0,1]$, so that 
	\begin{equation*}
		a_x = a_{x''} \quad \text{ and } \quad  kx' - a_{x'} + \frac{1}{2}   = kx'' - a_{x''} + \frac{1}{2}.
	\end{equation*} Notice that since $a_{x} = a_{x''}$, we have $ |x - x''| \leq 1/k \leq |x-x'| $. Thus,
	\begin{equation*}
		\begin{split}
			\left|  \psi(kx - a_x + \frac{1}{2} )  - \psi(kx' - a_{x'} + \frac{1}{2} ) \right|& = \left|  \psi(kx - a_x + \frac{1}{2} )  - \psi(kx'' - a_{x''} + \frac{1}{2} ) \right|\\
			& \leq c_1 k|x - x''|.
		\end{split}
	\end{equation*}
	\end{itemize}
	We can obtain similar bounds for $\left| \psi(ky -b_y + \frac{1}{2} ) -  \psi(ky' -b_{y'} + \frac{1}{2} ) \right|$. Now, let us bound $\sup_{(x,y) \neq (x',y') \in \cD} \frac{|f_{qp}(x,y) - f_{qp}(x',y') |}{(|x - x'| + |y - y'|)^{\gamma}}$:
	\begin{itemize}
		\item (Both of $x,x'$ and $y,y'$ fall in Type III) We have
		\begin{equation*}
			\begin{split}
				\frac{|f_{qp}(x,y) - f_{qp}(x',y') |}{(|x - x'| + |y - y'|)^{\gamma}} \leq  \frac{|f_{qp}(x,y) - f_{qp}(x',y') |}{(|x - x''| + |y - y''|)^{\gamma}} &\leq \frac{(p-q)c_1 k(|x-x''| + |y - y''|)}{(|x - x''| + |y - y''|)^{\gamma}}  \\
				& = (p-q)c_1 k(|x-x''| + |y - y''|)^{1- \gamma} \\
				& \leq (p-q)c_1 k (2/k)^{1-\gamma} = c_1 2^{1-\gamma} (p-q) k^{\gamma}.
			\end{split}
		\end{equation*}
		\item (One of $x,x'$ and $y,y'$ falls in Type III, without loss of generality, we assume it is $x,x'$) We have
		\begin{equation*}
			\begin{split}
				\frac{|f_{qp}(x,y) - f_{qp}(x',y') |}{(|x - x'| + |y - y'|)^{\gamma}} \leq  \frac{|f_{qp}(x,y) - f_{qp}(x',y') |}{(|x - x''| + |y - y'|)^{\gamma}} &\leq \frac{(p-q)c_1 k(|x-x''| + |y - y'|)}{(|x - x''| + |y - y'|)^{\gamma}}  \\
				& = (p-q)c_1 k(|x-x''| + |y - y'|)^{1- \gamma} \\
				& \leq (p-q)c_1 k (2/k)^{1-\gamma} = c_1 2^{1-\gamma}(p-q)k^{\gamma}.
			\end{split}
		\end{equation*}
		\item (None of $x,x'$ and $y,y'$ falls in Type III) We have
		\begin{equation*}
			\begin{split}
				\frac{|f_{qp}(x,y) - f_{qp}(x',y') |}{(|x - x'| + |y - y'|)^{\gamma}}  &\leq \frac{(p-q)c_1 k(|x-x'| + |y - y'|)}{(|x - x'| + |y - y'|)^{\gamma}}  \\
				& = (p-q)c_1 k(|x-x'| + |y - y'|)^{1- \gamma} \\
				& \leq (p-q)c_1 k (2/k)^{1-\gamma} = c_1 2^{1-\gamma} (p-q) k^{\gamma}.
			\end{split}
		\end{equation*}
	\end{itemize} In summary, we have $\sup_{(x,y) \neq (x',y') \in \cD}\frac{|f_{qp}(x,y) - f_{qp}(x',y') |}{(|x - x'| + |y - y'|)^{\gamma}} \leq c_1 2^{1-\gamma} (p-q) k^{\gamma} \leq L-1$ as long as $(p-q) k^{\gamma} \leq (L-1)/(c_1 2^{1-\gamma})$. To this end, we have $\|f_{qp}\|_{\cH_{\gamma}} \leq L$.
	\vskip.5cm
	{\noindent \bf (Case 2: show $f_{qp} \in \cF_{\gamma}(L)$ when $\gamma \geq 1$).}  Recall,
\begin{equation*}
	\|f\|_{\cH_{\gamma}} = \max_{j+l \leq \lfloor \gamma \rfloor  } \sup_{(x,y) \in \cD} \left| \nabla_{jl} f(x,y) \right| + \max_{j+l = \lfloor \gamma \rfloor } \sup_{(x,y) \neq (x',y') \in \cD} \frac{\left| \nabla_{jl} f(x,y) - \nabla_{jl}f(x', y') \right|}{( |x-x'| + |y-y'|  )^{\gamma - \lfloor \gamma \rfloor }}.
\end{equation*}

By \eqref{eq:f-simple}, we have
\begin{equation*}
	\begin{split}
		\max_{j+l \leq \lfloor \gamma \rfloor  } \sup_{(x,y) \in \cD} \left| \nabla_{jl} f_{qp}(x,y) \right| &\leq \max_{j +l \leq \lfloor \gamma \rfloor  } \sup_{(x,y) \in \cD} k^{j + l} \left( Q^{qp}_{a_x b_y} - q \right) \nabla_j \psi(kx - a_x + \frac{1}{2} ) \nabla_l \psi(ky -b_y + \frac{1}{2} ) \\
		& \leq (p-q) c^2_{\gamma } k^{\lfloor \gamma \rfloor} \leq (p-q) c^2_{\gamma} k^{\gamma },
	\end{split}
\end{equation*} where $c_{\gamma }$ satisfies $\max_{m =1 ,\ldots, c_{\lfloor \gamma \rfloor}} \sup_x |\nabla_m \psi(x)| \leq c_{\gamma}$. At the same time,
	\begin{equation*}
		\begin{split}
			& | \nabla_{jl} f_{qp}(x,y) -\nabla_{jl}  f_{qp}(x',y') | \\
			= & \Big|  ( Q^{qp}_{a_x b_y } - q )\nabla_{j}  \psi(kx - a_x + \frac{1}{2} ) \nabla_{l} \psi(ky -b_y + \frac{1}{2} ) \\
			& - (Q^{qp}_{a_{x'} b_{y'} } - q) \nabla_{j}  \psi(kx' - a_{x'} + \frac{1}{2} ) \nabla_{l} \psi(ky' -b_{y'} + \frac{1}{2} ) \Big|.
		\end{split}
	\end{equation*} 
	We discuss to bound the above quantity under different scenarios. The first scenario is that $a_x \neq b_y$ and $a_{x'} \neq b_{y'}$, then $|\nabla_{jl} f_{qp}(x,y) - \nabla_{jl} f_{qp}(x',y') | = 0$. The second scenario is that we have $a_x = b_y$ or $a_{x'} = b_{y'}$, one of them holds. Without loss of generality, let us consider $a_{x'} = b_{y'} = 1$ and $a_x > b_y$. In this scenario, we have
	\begin{equation*}
		\begin{split}
			& |\nabla_{jl} f_{qp}(x,y) - \nabla_{jl} f_{qp}(x',y') | \\
			\leq &  (p-q)  \left|  \nabla_{j} \psi(kx - a_x + \frac{1}{2} ) \nabla_{l} \psi(ky -b_y + \frac{1}{2} ) \right| \\
		\end{split}
	\end{equation*} To bound the ratio $\frac{(p-q)  \left| \nabla_{j}  \psi(kx - a_x + \frac{1}{2} )\nabla_{l}  \psi(ky -b_y + \frac{1}{2} ) \right|}{(|x - x'| + |y - y'|)^{\gamma}}$, it is enough to consider $(x,y)$ so that $y = y'$ and $a_{x} = 2$, otherwise the ratio can only be smaller. So 
	\begin{equation*}
		\begin{split}
			&\sup_{(x,y) \neq (x',y') \in \cD} \frac{(p-q)  \left| \nabla_{j}  \psi(kx - a_x + \frac{1}{2} ) \nabla_{l} \psi(ky -b_y + \frac{1}{2} ) \right|}{(|x - x'| + |y - y'|)^{\gamma}}\\
			 \leq & c_\gamma \sup_{ x' \in [0,1/k], x \in (1/k,2/k]} \frac{(p-q)  \left| \nabla_{j}  \psi(kx - a_x + \frac{1}{2} ) \right|}{(|x - x'|)^{\gamma}} = c_\gamma\sup_{ x' \in [0,1/k], x \in (1/k,2/k]} \frac{(p-q)  \left| \nabla_{j}  \psi(kx - \frac{3}{2} ) \right|}{(|x - x'|)^{\gamma}}\\
			\overset{(a)} = &c_\gamma \sup_{ x' \in [0,1/k], x \in (1/k,2/k]} \frac{(p-q)  \left| \nabla_{j}  \psi(kx - \frac{3}{2} ) - \nabla_{j} \psi(k\times 1/k - \frac{3}{2}) \right|}{(|x - x'|)^{\gamma}}\\
			 \leq & c_\gamma \sup_{ x' \in [0,1/k], x \in (1/k,2/k]} \frac{c'_\gamma (p-q)  k |x - \frac{1}{k} |}{(|x - x'|)^{\gamma}} \leq c_\gamma c_\gamma' (p-q)  k |x - 1/k|^{1-\gamma} \leq c_\gamma c_\gamma' (p-q) k (1/k)^{1-\gamma}\\
			 \leq & c_\gamma c_\gamma' (p-q) k^\gamma, 
		\end{split}
	\end{equation*} where $c_\gamma'$ denotes the Lipschitz constant of $\nabla_j \psi(x)$ for any $j \leq \lfloor \gamma \rfloor$ and here in (a) we use the property $\nabla_j \psi(x)$ vanishes at $x = -1/2$. 

	The third scenario is that $a_x = b_y$ and $a_{x'} = b_{y'}$. In this case, we have	\begin{equation*}
		\begin{split}
			& | \nabla_{jl} f_{qp}(x,y) - f_{qp}(x',y') | \\
			= &  (p-q)  k^{j+l}   \left| \nabla_{j} \psi(kx - a_x + \frac{1}{2} ) \nabla_{l} \psi(ky -b_y + \frac{1}{2} ) -  \nabla_{j}\psi(kx' - a_{x'} + \frac{1}{2} ) \nabla_{l}\psi(ky' -b_{y'} + \frac{1}{2} ) \right| \\
			\leq & c_\gamma (p-q) k^{j+l}  \left| \nabla_{j} \psi(kx - a_x + \frac{1}{2} )  - \nabla_{j}\psi(kx' - a_{x'} + \frac{1}{2} ) \right| \\
			& + c_\gamma (p-q) k^{j+l}  \left| \nabla_{l}\psi(ky -b_y + \frac{1}{2} ) -  \nabla_{l}\psi(ky' -b_{y'} + \frac{1}{2} ) \right|.
		\end{split}
	\end{equation*} 
	
	Following a similar analysis as in Case 1, we can get bound 
	\begin{equation*}
		\begin{split}
			 \max_{j+l = \lfloor \gamma \rfloor } \sup_{(x,y) \neq (x',y') \in \cD} \frac{\left| \nabla_{jl} f(x,y) - \nabla_{jl}f(x', y') \right|}{( |x-x'| + |y-y'|  )^{\gamma - \lfloor \gamma \rfloor }} & \leq   k^{\lfloor \gamma \rfloor} c_\gamma c'_{\gamma} (p-q) k  (2/k)^{1 + \lfloor \gamma \rfloor - \gamma } \\
			 & = c_\gamma c'_{\gamma} 2^{1 + \lfloor \gamma \rfloor - \gamma } (p-q) k^\gamma. 
		\end{split}
	\end{equation*}

	In summary, we have
	\begin{equation*}
		\begin{split}
			\|f_{qp}\|_{\cH_\gamma}  & \leq c^2_\gamma (p-q) k^\gamma +  \max_{j+l = \lfloor \gamma \rfloor }\sup_{(x,y) \neq (x',y') \in \cD}\frac{ |\nabla_{jl}f_{qp}(x,y) - \nabla_{jl}f_{qp}(x',y') |}{(|x - x'| + |y - y'|)^{\gamma}} \\
			&\leq c^2_\gamma (p-q) k^\gamma + c_\gamma c'_{\gamma} 2^{1-\gamma} (p-q) k^{\gamma} \leq L
		\end{split}
	\end{equation*}
as long as $(p-q) k^{\gamma}$ is small than a constant which depends on $\gamma$ and $L$ only. We finish the proof of this proposition.

\section{Proofs in Section \ref{sec: community-detection} } \label{sec: proof-community-detection}

\subsection{Proof of Theorem \ref{th: clustering-error-comp-limit}  }
	This theorem is proved by leveraging the result in Theorem \ref{th: comp-limit-graphon-est} and a contradiction argument. First, without loss of generality, we consider the setting where $p,q$ are known since any estimator which is oblivious of $p,q$ will only be less powerful. 
		
		Suppose there is an estimator $\hZ \in \bbR[A]_{\leq D}^{n \times n}$ such that $\bbE_{A, M \sim \bbP_{\SBM (p,q)}} ( \ell(\hZ, Z_M) ) < \frac{1}{k} - \left( \frac{1}{k^2} + \frac{r(2-r)}{(1-r)^2 n}  \right)$. Then we could construct $\hM = q + (p-q) \hZ$ and it is a valid estimator for $M$, and belongs to the class $\bbR[A]_{\leq D}^{n \times n}$. Since for each $i < j$, we have $M_{ij} = q + (p-q) Z_{ij}$ and
		\begin{equation*}
			\ell(\hM, M) = \frac{1}{ {n \choose 2} } \sum_{1 \leq i < j \leq n} (\hM_{ij} - M_{ij})^2 = \frac{(p-q)^2}{ {n \choose 2} } \sum_{1 \leq i < j \leq n} (\hZ_{ij} - Z_{ij})^2 = (p-q)^2 \ell(\hZ, Z).
		\end{equation*}
		Thus
		\begin{equation*}
			\bbE_{A, M \sim \bbP_{\SBM (p,q)}} ( \ell(\hM, M) ) < \frac{(p-q)^2}{k} - (p-q)^2\left( \frac{1}{k^2} + \frac{r(2-r)}{(1-r)^2 n}  \right) 
		\end{equation*} and this contradicts with the result in Theorem \ref{th: comp-limit-graphon-est}. So the assumption $\bbE_{A, M \sim \bbP_{\SBM (p,q)}} ( \ell(\hZ, Z_M) ) < \frac{1}{k} - \left( \frac{1}{k^2} + \frac{r(2-r)}{(1-r)^2 n}  \right)$ does not hold and this finishes the proof of this theorem by setting $r = 1/2$.

\subsection{Analysis of SDP}\label{sec:SDP}

Consider the SBM model $M \sim \bbP_{\SBM (p,q)}$ in Section \ref{sec: community-detection}.
The following SDP was considered by \cite{guedon2016community,li2021convex},
\begin{equation} \label{eq: SDP-program}
		\begin{split}
			\max_{Z} \left\langle Z, A - \frac{p+q}{2} \1_n \1^\top_n \right \rangle
\text{ s.t. } Z \succeq 0, Z_{ij} = Z_{ji} > 0 \, \text{ for all }1 \leq  i < j \leq n \text{ and } Z_{ii} = 1.	\end{split}
	\end{equation}Its guarantee is given as follows.
\begin{Proposition} \label{prop: SDP-estimator}
	Suppose $n$ is sufficiently large and $\epsilon\leq q \leq p \leq 1- \epsilon$ for some small $\epsilon > 0$. Let $\hZ$ be the solution of the above SDP. Then there exists $c > 0$ independent of $n$ and $k$ such that  
	\begin{equation*}
		\begin{split}
			\bbE_{A, M \sim \bbP_{\SBM (p,q)}} ( \ell(\hZ, Z_M) ) \leq c \sqrt{\frac{p}{n(p-q)^2}}
		\end{split} 
	\end{equation*}
As a consequence, non-trivial community detection ($\bbE_{A, M \sim \bbP_{\SBM (p,q)}} ( \ell(\hZ, Z_M) )\ll \frac{1}{k}$) is achieved whenever $\frac{n(p-q)^2}{pk^2}\gg 1$.	
\end{Proposition}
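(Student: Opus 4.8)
Here is my plan. Let $Z^\star$ denote the $n\times n$ matrix with $Z^\star_{ij}=\indi((z_M)_i=(z_M)_j)$ for \emph{all} $i,j$, so $Z^\star_{ii}=1$; up to an arbitrarily small feasible perturbation $Z^\star$ lies in the feasible set of the SDP \eqref{eq: SDP-program}, and since $\ell(\cdot,\cdot)$ only sees off-diagonal entries we have $\ell(\hZ,Z_M)=\ell(\hZ,Z^\star)=\frac{1}{2{n\choose2}}\|Z^\star-\hZ\|_\F^2$. The plan is to first prove a purely deterministic bound of the form $\|Z^\star-\hZ\|_1\le 4n\|W\|/(p-q)$, where $W:=A-M$, $\|W\|$ is the spectral norm, and $\|B\|_1:=\sum_{i,j}|B_{ij}|$, and then control $\bbE\|W\|$ by matrix concentration. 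This mirrors the Guédon--Vershynin analysis of the Grothendieck-type SDP, adapted to the $k$-community planted model and to the loss $\ell$.

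For the deterministic step, set $R:=Z^\star-\tfrac12\1_n\1_n^\top$, whose entries are $+\tfrac12$ on same-community pairs and $-\tfrac12$ on different-community pairs (including the diagonal, where $R_{ii}=\tfrac12$). A direct computation, accounting for the zero diagonal of $A$ and $M$, gives $M-\tfrac{p+q}{2}\1_n\1_n^\top=(p-q)R-pI$, hence $A-\tfrac{p+q}{2}\1_n\1_n^\top=W+(p-q)R-pI$. Optimality of $\hZ$ gives $\langle \hZ-Z^\star,\,W+(p-q)R-pI\rangle\ge0$, and since $\langle \hZ-Z^\star,I\rangle=\mathrm{tr}(\hZ)-\mathrm{tr}(Z^\star)=0$ the $-pI$ term drops, leaving $(p-q)\langle Z^\star-\hZ,R\rangle\le\langle \hZ-Z^\star,W\rangle$. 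The key elementary fact is that feasibility of $\hZ$ ($\hZ\succeq0$ with $\hZ_{ii}=1$ forces $0<\hZ_{ij}\le1$) makes the sign of $Z^\star_{ij}-\hZ_{ij}$ match the sign of $R_{ij}$ on every pair: on same-community pairs $Z^\star_{ij}-\hZ_{ij}=1-\hZ_{ij}\ge0$ with weight $+\tfrac12$, on different-community pairs $Z^\star_{ij}-\hZ_{ij}=-\hZ_{ij}\le0$ with weight $-\tfrac12$, so $\langle Z^\star-\hZ,R\rangle=\tfrac12\|Z^\star-\hZ\|_1\ge0$. Combining with the trace--operator norm duality $|\langle \hZ-Z^\star,W\rangle|\le\|\hZ-Z^\star\|_*\|W\|\le(\mathrm{tr}\,\hZ+\mathrm{tr}\,Z^\star)\|W\|=2n\|W\|$ yields $\|Z^\star-\hZ\|_1\le 4n\|W\|/(p-q)$. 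Since every entry of $Z^\star-\hZ$ is at most $1$ in absolute value, $\|Z^\star-\hZ\|_\F^2\le\|Z^\star-\hZ\|_1$, and therefore
\[
\ell(\hZ,Z_M)=\frac{1}{2{n\choose2}}\|Z^\star-\hZ\|_\F^2\le\frac{1}{n(n-1)}\cdot\frac{4n\|W\|}{p-q}=\frac{4\|W\|}{(n-1)(p-q)}.
\]

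It remains to bound $\bbE\|W\|$. Conditionally on $z_M$, $W$ is symmetric with zero diagonal, independent above-diagonal entries bounded in $[-1,1]$, and $\mathrm{Var}(W_{ij})=M_{ij}(1-M_{ij})\le p$, so $\max_i\sum_j\mathrm{Var}(W_{ij})\le np$. Standard operator-norm concentration for such matrices (e.g. Bandeira--Van Handel) gives $\bbE[\|W\|\mid z_M]\lesssim\sqrt{np}+\sqrt{\log n}$, which is $\lesssim\sqrt{np}$ since $p\ge\epsilon$ and $n$ is large; this bound does not depend on $z_M$, so it holds unconditionally. Plugging in, $\bbE_{A,M\sim\bbP_{\SBM (p,q)}}(\ell(\hZ,Z_M))\lesssim\sqrt{np}/(n(p-q))=\sqrt{p/(n(p-q)^2)}$, which is the asserted bound. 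The ``consequence'' is then immediate: $\sqrt{p/(n(p-q)^2)}\ll 1/k$ is equivalent to $n(p-q)^2/(pk^2)\gg1$.

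I expect the main difficulty to be bookkeeping rather than any single hard step: verifying the identity $M-\tfrac{p+q}{2}\1_n\1_n^\top=(p-q)R-pI$ including the diagonal correction, checking that the $\pm\tfrac12$ pattern of $R$ aligns with the signs of $Z^\star_{ij}-\hZ_{ij}$ so that $\langle Z^\star-\hZ,R\rangle$ collapses exactly to $\tfrac12\|Z^\star-\hZ\|_1$ (this is precisely where the feasibility bounds $0<\hZ_{ij}\le1$ enter), and invoking a version of the operator-norm bound on $W$ valid in the dense regime $p\asymp1$. The only other subtlety is that $Z^\star$ is merely in the closure of the feasible set, which is handled by comparing against $(1-\delta)Z^\star+\delta\1_n\1_n^\top$ and letting $\delta\downarrow0$.
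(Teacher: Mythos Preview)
Your argument is correct and follows the same Grothendieck--type SDP analysis as the paper, with one presentational difference: the paper invokes \cite[Theorem~5.1]{li2021convex} as a black box to obtain the high-probability bound $\|\hZ-Z_M\|_1\le 45\sqrt{pn^3}/(p-q)$ and then converts to expectation using the crude worst-case bound $\|\hZ-Z_M\|_\F^2\le n^2$ on the exceptional event, whereas you re-derive the $\ell_1$ bound from scratch as a deterministic inequality $\|Z^\star-\hZ\|_1\le 4n\|W\|/(p-q)$ and take $\bbE\|W\|$ directly. The intermediate steps (feasibility forces $\hZ_{ij}\in(0,1]$ via the $2\times2$ PSD minors; $\|\cdot\|_\F^2\le\|\cdot\|_1$ for matrices with entries in $[-1,1]$) are identical in both, and your final rate matches. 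Your route is more self-contained and avoids the two-event split; the paper's is shorter by citation.
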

\begin{proof}
First, the weak consistency for the solution of the SDP program in \eqref{eq: SDP-program} has been studied in \cite{guedon2016community,li2021convex}. In particular, the Theorem 5.1 in \cite{li2021convex} shows that for any $M \in \cM_{k,p,q}$, we have with probability at least $1 - 2(e/2)^{-2n}$ such that
	\begin{equation} \label{ineq: SDP-l1-error}
		\|\hZ - Z_M\|_1 \leq 45 \sqrt{pn^3}/(p-q).
	\end{equation}
	Moreover, since $Z \succeq 0$, we have $Z = X X^\top$ for some $X \in \bbR^{n \times n}$. Then for every $i \in [n]$, we have $Z_{ii} = X_{i:} X_{i:}^\top = \|X_{i:}\|^2_2$, which is required to be equal to $1$. Thus, $|Z_{ij}| = | X_{i:} X_{j:}^\top | \leq \|X_{i:}\|_2 \| \|X_{j:}\|_2 = 1$. With the additional $Z_{ij} \geq 0$ constraint, we have $\hZ_{ij} \in [0,1]$ for all $i,j \in [n]$. Therefore, with probability at least $1 - 2(e/2)^{-2n}$,
\begin{equation} \label{ineq: SDP-l2-error}
	\|\hZ - Z_M\|_\F^2 \leq \|\hZ - Z_M\|_1 \leq 45 \sqrt{pn^3}/(p-q).
\end{equation}

Then 
\begin{equation*}
	\begin{split}
		\bbE_{A, M \sim \bbP_{\SBM (p,q)}}  ( \ell(\hZ, Z_M) ) &\overset{(a)}\leq \frac{1}{n^2} \bbE_{A, M \sim \bbP_{\SBM (p,q)}} \|\hZ - Z_M\|_\F^2 \\
		& \overset{(b)} \leq  45 \sqrt{\frac{ p}{n (p-q)^2}} + 2(e/2)^{-2n} \\
		& \overset{(c)}\leq c \sqrt{\frac{ p}{n (p-q)^2}} 
	\end{split}
\end{equation*} here (a) is because of the symmetry and the fact that $\hZ_{ii} = 1$ and $(Z_M)_{ii} = 0$ for all $i \in [n]$; (b) is because of \eqref{ineq: SDP-l2-error} and the fact each entry of $\hZ - Z_M$ belongs to $[-1,1]$, and thus $\|\hZ - Z\|_\F^2$ is at most $n^2$; (c) is because $n$ is sufficiently large. 
\end{proof}

\section{Proofs in Section \ref{sec: extension} } \label{sec: proof-extension}

\subsection{A Spectral Algorithm for Sparse Graphon Estimation} \label{eq:spec-sparse}

We show that a truncated SVD procedure on the denoised adjacency matrix \citep{chin2015stochastic} can also achieve the rate $\frac{\rho k}{n}$ in expectation in sparse graphon estimation. Let us define $T_\tau(A) \in \{0,1 \}^{n \times n}$ as a truncated version of $A$ by replacing the $i$th row and column of $A$ with zeros whenever the degree of $i$th node of $A$ exceeds $\tau$. Then, the truncated SVD estimator is
\begin{equation*}
	\hM = \argmin_{ \rank(M) \leq k } \| T_\tau(A) - M \|_\F^2,
\end{equation*}
and its guarantee is given as follows.
\begin{Proposition} \label{prop: sparse-graphon-upper-bound}
	Suppose $n \rho \geq C_1$ and $\tau = C_2n \rho $ for some large constants $C_1,C_2 > 0$. Then there exists $C_3 > 0$ such that for any $M \in \cM_{k, \rho}$, we have $ \bbE( \ell(\hM, M) ) \leq C_3\frac{\rho k}{n}  $.
\end{Proposition}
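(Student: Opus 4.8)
The plan is to control the estimation error by the squared spectral norm of the centred, degree-regularized adjacency matrix, and then feed in a concentration bound for sparse random graphs. Since $A_{ij}\sim\Bern(M_{ij})$ independently over $i<j$ and $A_{ii}=M_{ii}=0$, we have $\bbE A=M$; moreover $\rank(M)\le k$ for every $M\in\cM_{k,\rho}$. Taking $\hM$ to be the rank-$k$ truncated SVD of $T_\tau(A)$ (a minimizer in the definition of $\hM$), Weyl's inequality gives $\|T_\tau(A)-\hM\|=\sigma_{k+1}(T_\tau(A))\le\sigma_{k+1}(M)+\|T_\tau(A)-M\|=\|T_\tau(A)-M\|$, hence $\|\hM-M\|\le 2\|T_\tau(A)-M\|$. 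Because $\hM-M$ has rank at most $2k$ and $\hM,M$ are symmetric,
\[
\ell(\hM,M)=\frac{1}{{n\choose 2}}\sum_{i<j}(\hM_{ij}-M_{ij})^2\le\frac{\|\hM-M\|_\F^2}{n(n-1)}\le\frac{2k\,\|\hM-M\|^2}{n(n-1)}\le\frac{8k}{n(n-1)}\,\|T_\tau(A)-M\|^2,
\]
so it suffices to show $\bbE\,\|T_\tau(A)-M\|^2\le C\,n\rho$ for an absolute constant $C$.

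For the spectral bound I would invoke the concentration results for degree-regularized sparse random graphs of \cite{chin2015stochastic} (and Le--Levina--Vershynin). Since $\max_{ij}M_{ij}\le\rho$, the maximal expected degree is at most $n\rho$, so — provided $n\rho\ge C_1$ and $\tau=C_2 n\rho$ with $C_2$ a large enough absolute constant — for every fixed $r\ge 1$ one has $\|T_\tau(A)-M\|\le c(r)\sqrt{n\rho}$ with probability at least $1-n^{-r}$, uniformly over $M\in\cM_{k,\rho}$ (the bound depends only on $n,\rho,\tau$). On the complementary event I would use the deterministic estimates $\|T_\tau(A)\|\le\max_i\sum_j(T_\tau(A))_{ij}\le\tau=C_2 n\rho$ (each retained vertex has degree $\le\tau$, each removed vertex contributes a zero row and column) and $\|M\|\le\max_i\sum_j M_{ij}\le n\rho$, giving $\|T_\tau(A)-M\|^2\le(C_2+1)^2n^2\rho^2$ always. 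Taking $r=3$,
\[
\bbE\,\|T_\tau(A)-M\|^2\le c(3)^2\,n\rho+(C_2+1)^2 n^2\rho^2\cdot n^{-3}\le c(3)^2\,n\rho+(C_2+1)^2 n^{-1}\le C\,n\rho,
\]
using $\rho\le 1$ and $n\rho\ge C_1\ge 1$. Substituting into the previous display yields $\bbE\,\ell(\hM,M)\le 8Ck\rho/(n-1)\le C_3\,\rho k/n$.

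The main obstacle is exactly the concentration bound $\|T_\tau(A)-M\|\lesssim\sqrt{n\rho}$ in the genuinely sparse regime, where $n\rho$ may be merely a large constant: there $\|A-M\|$ on its own is \emph{not} of order $\sqrt{n\rho}$, since a handful of unusually high-degree vertices inflate it, and this is precisely the reason the degree truncation $T_\tau$ is part of the estimator. Supplying it requires citing the graph-regularization theorems (Chin--Rao--Vu, Le--Levina--Vershynin) or reproducing their $\varepsilon$-net plus heavy/light-pairs argument; by contrast the Weyl/Eckart--Young step, the rank-$2k$ passage from operator to Frobenius norm, and the crude deterministic control on the low-probability event are all routine.
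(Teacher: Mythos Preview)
Your approach is essentially the paper's: reduce to a spectral-norm bound via the rank of the difference, invoke the Chin--Rao--Vu concentration for the degree-truncated adjacency matrix, and handle the failure event crudely. The deterministic bound $\|T_\tau(A)\|\le\tau$ on the bad event is a nice touch (the paper just uses $\|T_\tau(A)-\tM\|_\F^2\le n^2$ there).

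There is one real slip. You assert $\rank(M)\le k$ and use $\sigma_{k+1}(M)=0$, but this is false: $M$ has zero diagonal by convention ($M_{ii}=0$), so $M$ equals a rank-$\le k$ block-constant matrix minus a diagonal matrix, and need not itself have rank $\le k$ (e.g.\ $k=1$, $Q=[\rho]$ gives $M=\rho(\1\1^\top-I)$, which is full rank). The paper fixes this by passing to $\tM$ with $\tM_{ii}=Q_{z_iz_i}$, which \emph{is} rank $\le k$, and paying the harmless $\|\tM-M\|_\F^2\le n\rho^2$. Equivalently, you could keep your Weyl step but note $\sigma_{k+1}(M)\le\|\mathrm{diag}(Q_{z_iz_i})\|\le\rho\le\sqrt{n\rho}$, which only changes constants. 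Either patch is one line; the rest of your argument goes through.
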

\begin{proof}
	First, let $\widetilde{M} \in [0,\rho]^{n \times n}$ be the matrix such that its off-diagonal entries are the same as $M$, but $\widetilde{M}_{ii} = \rho$ for all $i \in [n]$. Then
	\begin{equation} \label{ineq: sparse-graphon-upper-bound-1}
		\begin{split}
			\|\hM - M\|_\F^2 &= \|\hM - \tM + \tM - M\|_\F^2 \leq 2( \|\hM - \tM \|_\F^2 + \|\tM - M\|_\F^2 ) \\
			&\overset{(a)}\leq 2k \|\hM - \tM\|^2 + 2 \rho^2 n \leq 2k \|\hM - T_\tau(A) + T_\tau(A) - \tM\|^2 + 2 \rho^2 n \\
			& \leq 2k( \|\hM - T_\tau(A)\| + \|T_\tau(A) - \tM\| )^2 + 2 \rho^2 n  \\
			&\overset{(b)}\leq  8k \|T_\tau(A) - \tM\|^2 + 2 \rho^2 n 
		\end{split}
	\end{equation} where (a) is because $\hM$ and $\tM$ are all of rank at most $k$, and thus $\rank(\hM - \tM) \leq 2k$; (b) is because the truncated SVD $\hM$ is best rank $k$ approximation of $T_\tau(A)$ in any unitarily invariant norms \citep[Theorem 2]{mirsky1960symmetric} and $\tM$ is of rank at most $k$;	
	
 By \cite[Lemma 20]{gao2017achieving} or \cite[Lemma 27]{chin2015stochastic}, with probability at least $1 - n^{-C''}$ for some large $C''$ depending on $C_2$, when $\tau \asymp \sqrt{n \rho} $,  we have
	\begin{equation} \label{ineq: sparse-graphon-upper-bound-2}
		\|T_\tau(A) - \tM\|^2 \leq C'''(n \rho + 1).
	\end{equation}
	In summary, by combining \eqref{ineq: sparse-graphon-upper-bound-1} and \eqref{ineq: sparse-graphon-upper-bound-2}, we have
	\begin{equation*}
		\begin{split}
			\bbE( \ell(\hM, M) ) &\leq \bbE \left( \frac{1}{2{n \choose 2}} \|\hM - M\|_\F^2 \right) \overset{\eqref{ineq: sparse-graphon-upper-bound-1} } \leq   \frac{1}{2{n \choose 2}} \bbE\left(  8k \|T_\tau(A) - \tM\|^2 + 2 \rho^2 n  \right)  \\
			& \overset{(a)}\leq \frac{1}{2{n \choose 2}} \left( 8C'''n \rho k + 2 \rho^2 n  \right) + \frac{1}{2{n \choose 2}}n^{-C''} n^2  \\
			& \overset{(b)}\leq C_3 \frac{\rho k}{n} + c n^{-C''} \overset{(c)}\leq C_3 \frac{\rho k}{n}
		\end{split}
	\end{equation*} where (a) is because of \eqref{ineq: sparse-graphon-upper-bound-2}, $n \rho \geq C_1$ and the fact that $\| T_\tau(A) - \tM \|^2 \leq \| T_\tau(A) - \tM \|_\F^2 \leq n^2$ by constraint; (b) is because $\rho < 1$; (c) is because $C''$ can be large enough such that $\frac{\rho k}{n}$ dominates $n^{-C''}$. This finishes the proof of this proposition.
	\end{proof}

\subsection{Proof of Theorem \ref{th: sparse-graphon-estimation-final-lower-bound}  }
	First, we apply the results in Theorem \ref{th: comp-limit-graphon-est} and use the new $p,q$ definition here. Given any $0 < \rho < 1,0 \leq q < p \leq 1$, we define $\cM_{k,\rho, p, q}$ as the sparse version of $\cM_{k, p, q}$:
	\begin{equation} \label{def: Mk-rho-pq}
\begin{split}
	\cM_{k,\rho, p,q} = \Big\{ &M = (M_{ij})  \in [0,1]^{  n \times n}: Q = \rho q \1_k \1_k^\top + \rho(p-q) I_k, M_{ii} = 0 \text{ for } i \in [n], \\
	&  M_{ij} = M_{ji} = Q_{z_i z_j} \text{ for all } i \neq j \text{ for some } z \in [k]^{n} \Big\}.
\end{split}
\end{equation}
Then by Theorem \ref{th: comp-limit-graphon-est} with $q, p$ being replaced by $\rho q, \rho p$, we have given any $0 < r < 1$, $2 \leq k \leq \sqrt{n}$ and $D \geq 1$, if  
\begin{equation*} 
	\frac{(p-q)^2 \rho^2 }{\rho q(1- \rho p)} \leq \frac{r}{(D(D+1))^2}\frac{k^2}{n},
\end{equation*} then there exists $c_r > 0$ such that
\begin{equation} \label{ineq: sparse-graphon-ineq1}
\begin{split}
	\inf_{\hM \in \bbR[A]^{n \times n}_{\leq D} } \bbE_{A, M \sim \bbP_{\SBM (\rho, p,q)}} (\ell(\hM, M)) &\geq \frac{(p-q)^2 \rho^2 }{k} - (p-q)^2 \rho^2 \left( \frac{1}{k^2} + \frac{r(2-r)}{(1-r)^2 n}  \right) \\
	& \geq c_r \frac{(p-q)^2 \rho^2 }{k},
\end{split}
\end{equation} where $\bbP_{\SBM (\rho, p,q)}$ denotes the uniform prior on $\cM_{k,\rho,p,q}$. 

	Then, we take $q = \epsilon$ and $p = 1- \epsilon$ with $\epsilon > 0$ being a small enough constant such that $\frac{(p-q)^2 \rho }{q(1-p \rho)} \geq c' \frac{r}{(D(D+1))^2} \frac{k^2}{n} $ for some $0 < c' < 1$. This is achievable as $\rho \geq \frac{c k^2 }{n}$ for some $0<c< 1$ and $\frac{(p-q)^2 }{q(1-p \rho)}$ with $q = \epsilon$ and $p = 1- \epsilon$ is a monotonically increasing function as $\epsilon$ decreases. Then we have
	\begin{equation*}
	\begin{split}
		\inf_{\hM \in \bbR[A]^{n \times n}_{\leq D} } \sup_{M \in \cM_{k,\rho}} \bbE( \ell(M, \hM) ) & \geq \inf_{\hM \in \bbR[A]^{n \times n}_{\leq D} } \bbE_{A, M \sim \bbP_{\SBM (\rho, p,q)}} \ell(\hM, M) \\
		& \overset{ \eqref{ineq: sparse-graphon-ineq1} } \geq c_r \frac{(p-q)^2 \rho^2}{k} = c_r \frac{r}{(D(D+1))^2} q(1-p \rho) \frac{k \rho}{n} \geq \frac{c'' k \rho}{nD^4},
	\end{split}
	\end{equation*} where $c''$ is independent of $k$ and $n$.

\subsection{Proof of Proposition \ref{prop: biclustering-upper-bound}}
	Without loss of generality, we assume $k_1 \leq k_2$. First, we have the following inequality holds almost surely:
	\begin{equation} \label{ineq: biclustering-prop-ineq}
		\begin{split}
			\|\hM - M\|_\F^2 &\overset{(a)}\leq 2k_1 \|\hM - M\|^2 \leq 2k_1 \|\hM - Y + Y - M\|^2 \leq 2k_1( \|\hM - Y\| + \|Y - M\| )^2 \\
			&\overset{(b)}\leq  8k_1 \|Y - M\|^2. 
		\end{split}
	\end{equation} where (a) is because $\hM$ and $M$ are all of rank at most $k_1$, and thus $\rank(\hM - M) \leq 2k_1$; (b) is because the truncated SVD $\hM$ is best rank $k_1$ approximation of $Y$ in any unitarily invariant norms \citep[Theorem 2]{mirsky1960symmetric} and $M$ is of rank equal or smaller than $k_1$.
	
	Thus
		\begin{equation} \label{ineq: biclustering-upp-bd-ineq}
	\begin{split}
		\bbE[\|\hM - M\|_\F^2] &\overset{ \eqref{ineq: biclustering-prop-ineq} }\leq 8 k_1 \bbE[ \|Y - M\|^2 ] \\
		& = 8 k_1 \int_{0}^{\infty} \bbP( \|Y - M\|^2  > t ) dt \\
		& = 8 k_1 \int_{0}^{\infty} \bbP( \|Y - M\|  > \sqrt{t} ) dt \\
		& = 8 k_1 \int_{0}^{( \sqrt{n_1} + \sqrt{n_2} )^2} \bbP( \|Y - M\|  > \sqrt{t} ) dt + 8 k_1 \int_{( \sqrt{n_1} + \sqrt{n_2} )^2}^{\infty} \bbP( \|Y - M\|  > \sqrt{t} ) dt \\
		& \leq 8 k_1 ( \sqrt{n_1} + \sqrt{n_2} )^2 + 8 k_1 \int_{0}^{\infty} 2 (\sqrt{n_1} + \sqrt{n_2} + x) \bbP( \|Y - M\|  > \sqrt{n_1} + \sqrt{n_2} + x )  dx \\
		& \overset{(a)}\leq 16 k_1 (n_1 + n_2) +  16 k_1 \int_{0}^{\infty} 2(\sqrt{n_1} + \sqrt{n_2} + x)\exp(-x^2/2) dx \\
		& \overset{(b)}\leq C  k_1 (n_1 + n_2),
	\end{split}
	\end{equation} 
where (a) is because $(a+b)^2 \leq 2(a^2 + b^2)$ and the standard concentration results for random matrices with i.i.d. $N(0,1)$ entries, see \cite[Corollary 5.35]{vershynin2010introduction}; (b) is because $\int_{0}^\infty x \exp(-x^2/2) = 1$ and $\int_{0}^\infty \exp(-x^2/2) = \sqrt{2\pi}/2$.  Finally, for any $M \in \cM_{k_1, k_2}$, we have $$\bbE[\ell(M, \hM) ] = \frac{1}{n_1 n_2} \bbE[\|\hM - M\|_\F^2] \overset{ \eqref{ineq: biclustering-upp-bd-ineq} }\leq \frac{C k_1}{n_1 \wedge n_2 }.$$

\subsection{Proof of Theorem \ref{th: biclustering-comp-lower-bound}  }
\begin{proof}
	Without loss of generality, we assume $k_1 \leq k_2$ in the proof and to get the results in the statement, we just need to replace $k_1$ by $k_1 \wedge k_2$ in the end. Due to symmetry, we have
	\begin{equation*}
		\begin{split}
			\inf_{ \hM \in \bbR[Y]_{\leq D}^{n_1 \times n_2} } \bbE_{Y, M \sim \bbP_{\BC (\lambda)} } ( \ell(\hM, M) ) & = \inf_{ g \in \bbR[A]_{\leq D} } \bbE_{Y, M \sim \bbP_{\BC (\lambda)} } [( g(Y) - M_{11} )^2] \\
			& = \inf_{ g \in \bbR[A]_{\leq D} } \sum_{j=1}^{k_1} \bbE_{Y, M \sim \bbP_{\BC (\lambda)} } [( g(Y) - M_{11} )^2 | (z_1)_1 = j] \bbP( (z_1)_1 = j ) \\
			& = \inf_{ g \in \bbR[A]_{\leq D} } \bbE_{Y, M \sim \bbP_{\BC (\lambda)} } [( g(Y) - M_{11} )^2 | (z_1)_1 = 1]\\
			& = \inf_{ g \in \bbR[A]_{\leq D} } \bbE_{Y, M \sim \bbP'_{\BC} } [( g(Y) - M_{11} )^2],
		\end{split}
	\end{equation*} where in the last equality, we denote $\bbP_{\BC (\lambda)}'$ as the restriction of $\bbP_{\BC (\lambda)}$ on $\cM_{k_1, k_2}$ such that $\{(z_1)_1 = 1 \}$. The biclustering model falls in the class of additive Gaussian noise model, so we could apply Proposition \ref{prop: schramm-wein-gaussian} in Appendix \ref{sec: comp-lower-bound-gaussian-model}. In the notation described in Appendix \ref{sec: comp-lower-bound-gaussian-model}, $X$ here is the vectorization of $M$, $x = M_{11}$ and $N = n_1 n_2$.
	
	Then by Proposition \ref{prop: schramm-wein-gaussian}, we have
	\begin{equation} \label{ineq: bicluster-proof-ineq1}
		\inf_{ g \in \bbR[A]_{\leq D} } \bbE_{Y, M \sim P'_{\BC} } [( g(Y) - M_{11} )^2] \geq \bbE_{M \sim \bbP_{\BC (\lambda)}'}( M_{11}^2 ) - \sum_{ \balpha \in \bbN^N, 0 \leq | \balpha | \leq D } \frac{\kappa_{\balpha}^2(M_{11}, X)}{ \balpha! }.
	\end{equation}
	
	Notice that $\bbE_{M \sim \bbP_{\BC (\lambda)}'}( M_{11}^2 ) = \lambda^2/k_1$ and $\bbP_{\BC (\lambda)}'$ is exactly the same prior considered in Propositions \ref{prop: biclustering-alpha-property} and \ref{prop: biclustering-bound-sum-of-alpha}, then by Proposition \ref{prop: biclustering-bound-sum-of-alpha}, we have
	\begin{equation}
	\begin{split}
		\inf_{ \hM \in \bbR[Y]_{\leq D}^{n_1 \times n_2} } \bbE_{Y, M \sim \bbP_{\BC (\lambda)} } ( \ell(\hM, M) ) &=\inf_{g \in \bbR[A]_{\leq D} } \bbE_{Y, M \sim P'_{\BC}} [ ( g(Y) - M_{11} )^2 ] \\
		& \overset{ \eqref{ineq: bicluster-proof-ineq1} } \geq  \frac{\lambda^2}{k_1} - \sum_{ \balpha \in \bbN^N, 0 \leq | \balpha | \leq D } \frac{\kappa_{\balpha}^2(M_{11}, X)}{ \balpha! } \\
		& \overset{ \text{Proposition } \ref{prop: biclustering-bound-sum-of-alpha} }\geq \frac{\lambda^2}{k_1} - \left( \frac{\lambda^2}{k_1^2} + \frac{r(2-r)\lambda^2}{(1-r)^2(n_1 \vee n_2)}  \right).
	\end{split}
	\end{equation} 	This finishes the proof of this theorem.
\end{proof}

\subsubsection{Additional Proofs for Theorem \ref{th: biclustering-comp-lower-bound} }
Note that in biclustering, we can also view $\balpha \in \bbN^N$ as a multi-bipartite graph between vertex sets $[n_1]$ and $[n_2]$. In this case, let $V_1(\balpha) \subseteq [n]$ denote the set of vertices spanned by the vertices of $\balpha$ from Group $1$ and $V_2(\balpha) \subseteq [n]$ denote the set of vertices spanned by the vertices of $\balpha$ from Group $2$. We also let $|V(\balpha)| = |V_1(\balpha)| + |V_2(\balpha)|$. Again without loss of generality, we assume $k_1 \leq k_2$ throughout this section. Next, we establish some useful properties regarding $\kappa_{\balpha}(M_{11}, X)$ in the biclustering model.
\begin{Proposition} \label{prop: biclustering-alpha-property}
	Suppose $M$ is generated from $\bbP_{\BC (\lambda)}$ condition on $\{(z_1)_1 = 1\}$. Denote $X \in \bbR^{n_1 n_2}$ as the vectorization of $M$ and $x$ as the first entry of $X$, i.e., $x = M_{11}$. Then we have
	\begin{itemize}
		\item (i) If $\balpha$ is disconnected or $\balpha$ is connected but $1 \notin V_2(\balpha)$, then $\kappa_{\balpha}(x, X) = 0$.
		\item (ii) If $\balpha$ is connected with $|\balpha| \geq 1$, $1 \in V_2(\balpha)$, $1 \notin V_1(\balpha)$, then $\kappa_{\balpha}(x, X) = 0$.

		\item (iii) If $\balpha$ is connected with $|\balpha| \geq 1$, $1 \in V_2(\balpha)$, $1 \in V_1(\balpha)$, then $|\kappa_{\balpha}(x, X)| \leq \lambda^{|\balpha|+1} (1/k_1)^{|V(\balpha)| - 1} (|\balpha| +1 )^{|\balpha|} $.
	\end{itemize}
\end{Proposition}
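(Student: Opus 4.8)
The plan is to mirror the proof of Proposition~\ref{prop: kappa-property-SBM} for the SBM, now viewing $\balpha\in\bbN^N$ (with $N=n_1n_2$) as a bipartite multigraph between $[n_1]$ and $[n_2]$ rather than an ordinary multigraph on $[n]$. Under $\bbP_{\BC (\lambda)}'$ we have $(z_1)_1=1$ and $x=M_{11}=\lambda\,\indi((z_1)_1=(z_2)_1)=\lambda\,\indi((z_2)_1=1)$, so the two distinguished vertices that played the role of ``vertex $1$'' and ``vertex $2$'' in the SBM argument become the vertex $1\in V_1(\balpha)$, whose label is frozen to $1$, and the vertex $1\in V_2(\balpha)$, which is the only source of randomness in $x$. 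Throughout I assume without loss of generality $k_1\le k_2$, so all labels are i.i.d.\ uniform on $[k_1\wedge k_2]=[k_1]$. By the cumulant interpretation \eqref{eq: kappa-cumulant-connection}, $\kappa_{\balpha}(x,X)$ is the joint cumulant of $x$ together with the multiset of entries of $X$ indexed by the edges of $\balpha$.

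For part~(i): if $\balpha$ is disconnected, the edge-variables (together with $x$, which depends only on labels attached to whichever component contains the group-$2$ vertex $1$, if any) split into two mutually independent groups, since distinct components involve disjoint sets of i.i.d.\ labels; Proposition~\ref{prop: cumulant-independent-prop} then forces $\kappa_{\balpha}=0$. If $\balpha$ is connected but $1\notin V_2(\balpha)$, no edge-variable involves $(z_2)_1$, so $x$ is independent of $X^{\balpha}$ and the cumulant again vanishes. For parts~(ii) and~(iii) I would first record, exactly as in \eqref{eq: E-X-alpha-formula}--\eqref{eq: E-X-alpha-beta-formula}, the moment formulas: for connected $\balpha$, $\bbE[X^{\balpha}]=\lambda^{|\balpha|}(1/k_1)^{|V(\balpha)|-1}$ where $|V(\balpha)|=|V_1(\balpha)|+|V_2(\balpha)|$; $\bbE[xX^{\balpha}]$ equals $\lambda^{|\balpha|+1}$ times the probability that all labels spanned by the graph obtained from $\balpha$ by adjoining the edge joining the frozen vertex of $V_1$ to vertex $1$ of $V_2$ coincide; and $\bbE[X^{\balpha-\bbeta}]=\lambda^{|\balpha-\bbeta|}(1/k_1)^{|V(\balpha-\bbeta)|-\cC(\balpha-\bbeta)}$. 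The only nuance is that a component containing the frozen $V_1$-vertex still contributes the same $(1/k_1)^{\#-1}$ factor, since $\bbP(\text{all of }v\text{ labels equal})=(1/k_1)^{v-1}$ whether or not one of the $v$ labels is pinned.

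With these in hand, part~(ii) follows by induction on $|\balpha|$: in the recursion \eqref{eq: kappa-recursive-relation} every proper connected subgraph $\bbeta$ with $|\bbeta|\ge1$ still has $1\notin V_1(\bbeta)$, so by part~(i) or the induction hypothesis $\kappa_{\bbeta}(x,X)=0$, leaving only $\bbeta=0$, and plugging the moment formulas gives $\kappa_{\balpha}(x,X)=\bbE[xX^{\balpha}]-\kappa_0(x,X)\bbE[X^{\balpha}]=0$. Part~(iii) is the analog of \eqref{ineq: SBM-alpha-prop3-main-arg1}--\eqref{ineq: SBM-alpha-prop3-main-arg2}: by induction, bound $|\kappa_{\balpha}|$ by $|\bbE[xX^{\balpha}]|$ plus $\sum|\kappa_{\bbeta}|\binom{\balpha}{\bbeta}\bbE[X^{\balpha-\bbeta}]$ over proper connected subgraphs $\bbeta$ containing both distinguished vertices, insert the inductive bound on $|\kappa_{\bbeta}|$, and invoke Lemma~\ref{lm: graph-subgraph-vertex-connection} — which is purely graph-combinatorial and applies verbatim to bipartite multigraphs — to check that every summand carries a factor $\lambda^{|\balpha|+1}(1/k_1)^{|V(\balpha)|-1}$; the remaining combinatorial factor then telescopes via $\sum_{\bbeta:|\bbeta|=\ell}\binom{\balpha}{\bbeta}=\binom{|\balpha|}{\ell}$ into $(|\balpha|+1)^{|\balpha|}$, exactly as in the SBM case. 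The main obstacle is bookkeeping rather than a new idea: one must track the bipartite vertex count consistently, checking in each of the three cases (both distinguished vertices present in $\balpha$; only the group-$2$ one present; neither present) that the augmented graph has the right number of spanned vertices and connected components, so that the telescoping in part~(iii) produces precisely the exponent $|V(\balpha)|-1$; verifying that Lemma~\ref{lm: graph-subgraph-vertex-connection} survives the reading ``multigraph $=$ bipartite multigraph'' is immediate, since its proof never uses bipartiteness.
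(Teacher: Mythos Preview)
Your proposal is correct and follows essentially the same approach as the paper's own proof: both view $\balpha$ as a bipartite multigraph, establish the same moment formulas for $\bbE[X^{\balpha}]$, $\bbE[xX^{\balpha}]$, and $\bbE[X^{\balpha-\bbeta}]$, handle parts (i) and (ii) via the cumulant independence property and induction respectively, and prove part (iii) by the same inductive argument together with (the bipartite reading of) Lemma~\ref{lm: graph-subgraph-vertex-connection}, finishing with the identical combinatorial telescoping to $(|\balpha|+1)^{|\balpha|}$. The only cosmetic difference is that the paper writes out the moment formulas and the case split in part (ii) (disconnected $\bbeta$, connected with $1\notin V_2(\bbeta)$, connected with $1\in V_2(\bbeta)$) slightly more explicitly, but your sketch covers all the same points.
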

\begin{proof}
	{\bf Proof of (i)}. By \eqref{eq: kappa-cumulant-connection}, we know that $\kappa_{\balpha}(x, X)$ is the joint cumulant of a group of random variables, say $\cG$. For the case $\balpha$ is disconnected, $\cG$ could be divided into $\cG_1$ and $\cG_2$ and $\cG_1$ and $\cG_2$ are independent of each other. Thus, by Proposition \ref{prop: cumulant-independent-prop}, $\kappa_{\balpha}(x, X)$ is zero. Similarly, for the case $\balpha$ is connected but $1 \notin V_2(\balpha)$, we know in the prior for $X$, $(z_1)_{1}$ is known and fixed, so if $1 \notin V_2(\balpha)$, $x$ will be independent of $X$. By Proposition \ref{prop: cumulant-independent-prop}, $\kappa_{\balpha}(x, X)$ will be zero.
	
	{\bf Proof of (ii)}. First, for any connected $\balpha$, we have
	 \begin{equation} \label{eq: E-X-alpha-formula-biclustering}
	 	\begin{split}
	 		\bbE[X^{\balpha}] &= \lambda^{|\balpha|} \bbP\Big(  \text{all vertices in } V_1(\balpha) \cup V_2(\balpha) \text{ are in the same community} \Big) \\
	 		 & = \lambda^{|\balpha|} \cdot \left(\frac{1}{k_1}\right)^{ |V_1(\balpha)| + |V_2(\balpha)| -1 } = \lambda^{|\balpha|} \cdot \left(\frac{1}{k_1}\right)^{ |V(\balpha)|  -1 }, \\
	 		\bbE[xX^{\balpha}] &=  \lambda^{|\balpha| + 1} \bbP( \text{all vertices in } V_1(\balpha) \cup \{1 \} \text{ and } V_2(\balpha) \cup \{1 \}  \text{ are in the same community} ) \\
	 		& = \lambda^{|\balpha| + 1}\left(\frac{1}{k_1}\right)^{ |V_1(\balpha) \cup \{1 \}| + |V_2(\balpha) \cup \{1 \}| -1 }.
	 	\end{split}
	 \end{equation}

	 Next, we prove the claim by induction. When $|\balpha| = 0$, $\kappa_0(x, X) = \bbE(x) = \frac{\lambda}{k_1}$. Then, for $\balpha$ such that $|\balpha| = 1$, $1 \in V_2(\balpha) $ and $1 \notin V_1(\balpha)$, we have 
	 \begin{equation*}
	 	\kappa_{\balpha}(x, X) \overset{ \eqref{eq: kappa-recursive-relation} } = \bbE[x X^{\balpha}] - \kappa_0(x, X) \bbE[X^{\balpha}] \overset{\eqref{eq: E-X-alpha-formula-biclustering} } = \lambda^{|\balpha| + 1} \left(\frac{1}{k_1}\right)^{ |V_1(\balpha)| + |V_2(\balpha)| } - \frac{\lambda}{k_1} \lambda^{|\balpha|}\left(\frac{1}{k_1}\right)^{ |V_1(\balpha)| + |V_2(\balpha)| - 1 }  =0.
	 \end{equation*}
	 
	 Now assume that given any $t \geq 2$ and any $\balpha$ such that $1 \in V_2(\balpha) $, $1 \notin V_1(\balpha)$ and $|\balpha| < t$, $\kappa_{\balpha}(x, X) = 0$. Then for any such $\balpha$ with $|\balpha| = t$, we have
	 \begin{equation*}
	 	\begin{split}
	 		\kappa_{\balpha}(x, X) &\overset{ \eqref{eq: kappa-recursive-relation} } =  \bbE(x X^{\balpha}) - \sum_{0 \leq \bbeta \lneq \balpha } \kappa_{\bbeta}(x, X) {\balpha \choose \bbeta } \bbE[X^{\balpha - \bbeta}] \\
	 		& \overset{(a)}= \bbE(x X^{\balpha}) -\kappa_0(x, X) \bbE[X^{\balpha }] = \lambda^{|\balpha| + 1} \left(\frac{1}{k_1}\right)^{ |V_1(\balpha)| + |V_2(\balpha)| } - \frac{\lambda}{k_1} \lambda^{|\balpha|}\left(\frac{1}{k_1}\right)^{ |V_1(\balpha)| + |V_2(\balpha)| - 1 } \\
	 		&  =0,
	 	\end{split}
	 \end{equation*} where in (a), for any $\bbeta$ such that $|\bbeta| \geq 1$, $1 \notin V_1(\bbeta)$ since $\bbeta$ is a subgraph of $\balpha$, and thus $\kappa_{\bbeta}(x, X) = 0$ for both the case $1 \notin V_2(\bbeta)$ by the result we have proved in part (i) and the case $1 \in V_2(\bbeta)$ by the induction assumption. 
This finishes the induction, and we have that for any $\balpha$ such that $|\balpha| \geq 1$, $1 \in V_2(\balpha) $ and $1 \notin V_1(\balpha)$, $\kappa_{\balpha}(x, X) = 0$. 
	
{\bf Proof of (iii).} First, for any connected subgraph $\bbeta$ of $\balpha$,
\begin{equation}\label{eq: E-X-alpha-beta-formula-biclustering}
\begin{split}
	 \bbE [X^{\balpha - \bbeta}] &= \lambda^{|\balpha - \bbeta|} \bbP(\text{each connected component in }\balpha - \bbeta \text{ belongs to the same community} ) \\
	& = \lambda^{|\balpha - \bbeta|} \left(\frac{1}{k_1}\right)^{ |V(\balpha-\bbeta)| - \cC(\balpha - \bbeta) },
\end{split}
\end{equation} where $\cC(\balpha - \bbeta)$ denotes the number of connected components in $\balpha - \bbeta$. 	
	
	Next, we prove the claim by induction. Recall that when $|\balpha| = 0$, $\kappa_0(x, X) = \frac{\lambda}{k_1}$. Then, for $\balpha$ such that $|\balpha| = 1$, $1 \in V_1(\balpha) $ and $1 \in V_2(\balpha)$, we have 
	 \begin{equation*}
	 \begin{split}
	 	\kappa_{\balpha}(x, X) &\overset{ \eqref{eq: kappa-recursive-relation} } = \bbE[x X^{\balpha}] - \kappa_0(x, X) \bbE[X^{\balpha}] \overset{\eqref{eq: E-X-alpha-formula-biclustering} } = \lambda^{|\balpha| + 1} \left(\frac{1}{k_1}\right)^{ |V(\balpha)|-1 } - \frac{\lambda}{k_1} \lambda^{|\balpha|}\left(\frac{1}{k_1}\right)^{ |V(\balpha)| -1 } \\
	 	& = \lambda^{|\balpha| + 1} \left(\frac{1}{k_1}\right)^{ |V(\balpha)|-1 } - \frac{\lambda}{k_1} \lambda^{|\balpha|}\left(\frac{1}{k_1}\right)^{ |V(\balpha)| -1 } = \lambda^{|\balpha| + 1} \left(\frac{1}{k_1}\right)^{ |V(\balpha)|-1 } ( 1- 1/k_1 ),
	 \end{split}
	 \end{equation*} thus $|\kappa_{\balpha}(x, X)| \leq \lambda^{|\balpha|+1} (1/k_1)^{|V(\balpha)| - 1} (|\balpha| +1 )^{|\balpha|}$ holds for $|\balpha| = 1$. 

Now assume that given any $t \geq 2$ and any $\balpha$ with $1 \in V_1(\balpha) $, $1 \in V_2(\balpha)$ and $|\balpha| < t$, $|\kappa_{\balpha}(x, X)| \leq \lambda^{|\balpha|+1} (1/k_1)^{|V(\balpha)| - 1} (|\balpha| +1 )^{|\balpha|}$. Then for any such $\balpha$ with $|\balpha| = t$, we have
\begin{equation*}
	\begin{split}
		|\kappa_{\balpha}(x, X)|  & \overset{ \eqref{eq: kappa-recursive-relation} } =  \big|\bbE(x X^{\balpha}) - \sum_{0 \leq \bbeta \lneq \balpha } \kappa_{\bbeta}(x, X) {\balpha \choose \bbeta } \bbE[X^{\balpha - \bbeta}]\big| \\
		& \leq \big|\bbE(x X^{\balpha})\big| + \sum_{0 \leq \bbeta \lneq \balpha } \big|\kappa_{\bbeta}(x, X) \big| {\balpha \choose \bbeta } \bbE[X^{\balpha - \bbeta}] \\
				& \overset{\text{Part }(i)}=  \big|\bbE(x X^{\balpha})\big| + \sum_{0 \leq \bbeta \lneq \balpha: \bbeta \text{ is connected }  } \big|\kappa_{\bbeta}(x, X) \big| {\balpha \choose \bbeta } \bbE[X^{\balpha - \bbeta}] \\
		& \overset{\eqref{eq: E-X-alpha-formula-biclustering}, \eqref{eq: E-X-alpha-beta-formula-biclustering} } = \lambda^{|\balpha| + 1} \left(\frac{1}{k_1}\right)^{ |V(\balpha)|-1 } + \sum_{0 \leq \bbeta \lneq \balpha: \bbeta \text{ is connected } } \big|\kappa_{\bbeta}(x, X) \big| {\balpha \choose \bbeta } \lambda^{|\balpha - \bbeta|} \left(\frac{1}{k_1}\right)^{ |V(\balpha-\bbeta)| - \cC(\balpha - \bbeta) } \\
		& \overset{(a)}= \lambda^{|\balpha| + 1} \left(\frac{1}{k_1}\right)^{ |V(\balpha)|-1 } +  \big|\kappa_{0}(x, X) \big|\lambda^{|\balpha|} \left(\frac{1}{k_1}\right)^{ |V(\balpha)| - 1 } \\
		 &+\sum_{ \substack{0 \lneq \bbeta \lneq \balpha,\\ \bbeta \text{ is connected },\\ 1 \in V_1(\bbeta), 1 \in V_2(\bbeta)} } \big|\kappa_{\bbeta}(x, X) \big| {\balpha \choose \bbeta } \lambda^{|\balpha - \bbeta|} \left(\frac{1}{k_1}\right)^{ |V(\balpha-\bbeta)| - \cC(\balpha - \bbeta) } \\
		 & \overset{(b)} = \lambda^{|\balpha| + 1} \left(\frac{1}{k_1}\right)^{ |V(\balpha)|-1 } + \lambda^{|\balpha| + 1} \left(\frac{1}{k_1}\right)^{ |V(\balpha)|} \\
		 & +\sum_{ \substack{0 \lneq \bbeta \lneq \balpha,\\ \bbeta \text{ is connected },\\ 1 \in V_1(\bbeta), 1 \in V_2(\bbeta)} } \lambda^{|\bbeta|+1} (1/k_1)^{|V(\bbeta)| - 1} (|\bbeta| +1 )^{|\bbeta|}  {\balpha \choose \bbeta } \lambda^{|\balpha - \bbeta|} \left(\frac{1}{k_1}\right)^{ |V(\balpha-\bbeta)| - \cC(\balpha - \bbeta) }\\
		 & \overset{(c) }\leq  2\lambda^{|\balpha| + 1} \left(\frac{1}{k}\right)^{ |V(\balpha)|-1 } + \lambda^{|\balpha| + 1}\left(\frac{1}{k}\right)^{ |V(\balpha)|-1 }\sum_{ \substack{0 \lneq \bbeta \lneq \balpha,\\ \bbeta \text{ is connected },\\ 1 \in V_1(\bbeta), 1 \in V_2(\bbeta)} }  (|\bbeta| +1 )^{|\bbeta|}  {\balpha \choose \bbeta } 
	\end{split}
\end{equation*} where in (a), we separate the term $\bbeta =0$ in the summation and then use the results proved in (i)(ii) of this proposition; (b) is because $\kappa_0(x, X) = \frac{\lambda}{k_1}$ and by the induction assumption; (c) is because of a bipartite version of Lemma \ref{lm: graph-subgraph-vertex-connection} and the proof is the same as Lemma \ref{lm: graph-subgraph-vertex-connection} by defining $|V(\balpha)| = |V_1(\balpha)| + |V_2(\balpha)|$.

	The rest of the proof for (iii) is the same as the one in \eqref{ineq: SBM-alpha-prop3-main-arg2} by replacing $k$ with $k_1$ and we omit it here for simplicity. This finishes the proof of this proposition.
\end{proof}

Next, we bound the number of $\balpha$ such that $\kappa_{\balpha}(x, X)$ is nonzero provided in Proposition \ref{prop: biclustering-alpha-property}.
\begin{Lemma}\label{lm: num-nonzero-alpha-biclustering}
	Given any $d \geq 1$, $0 \leq h \leq d-1$, the number of connected $\balpha$ such that $1 \in V_1(\balpha)$, $1 \in V_2(\balpha)$, $|\balpha| = d$ and $|V(\balpha)| = d+1 -h$ is at most $(n_1 \vee n_2)^{d-h-1} d^{d+h}$.
\end{Lemma}
\begin{proof}
	We view $\balpha$ as a multi-bipartite graph on two sets of nodes $[n_1]$ and $[n_2]$ and count the number of ways to construct such $\balpha$. The counting strategy is the following: we start with adding Vertex $1$ from Group 1 to $\balpha$ and then add $(d-h)$ edges such that for each edge there, it will introduce a new vertex either from Group 1 or Group 2; then in the second stage, we add the rest of the $h$ edges on these existing vertices. In the first stage, we can also count different cases by considering when will Vertex $1$ from Group 2 be introduced in adding new vertices.
	\begin{itemize}
		\item If Group 2 Vertex $1$ is the first vertex to be added after Group 1 Vertex $1$, then the number of such choices of $\balpha$ is at most $((n_1 \vee n_2)d)^{d-h-1} (d^2)^h$. Here $((n_1 \vee n_2)d)^{d-h-1}$ is because for each of the rest of $d-h-1$ edges, the number of choices for the starting vertex is at most $d$ since there are at most $(d+1)$ vertices in total and the number of choices for a newly introduced vertex is at most $(n_1 \vee n_2)$ since it is a bipartite graph. $(d^2)^h$ comes from that in the second stage, the choice of each extra edge is at most $ \lfloor \frac{d+1}{2} \rfloor \lceil \frac{d+1}{2} \rceil   \leq d^2$.
		\item By the same counting strategy, if Group 2 Vertex $1$ is the second vertex to be added in the first stage, then the number of such choices of $\balpha$ is at most $((n_1 \vee n_2)d)^{d-h-1} (d^2)^h$.
		\item $\cdots$
		\item If Group 2 Vertex $1$ is the $(d-h)$-th vertex to be added in the first stage, then the number of such choices of $\balpha$ is at most $((n_1 \vee n_2)d)^{d-h-1} (d^2)^h$.
	\end{itemize}
	  By adding them together, the number of choices of connected $\balpha$ such that $1 \in V_1(\balpha)$, $1 \in V_2(\balpha)$, $|\balpha| = d$ and $|V(\balpha)| = d+1 -h$ is at most 
	  \begin{equation*}
	  	(d-h) ((n_1 \vee n_2)d)^{d-h-1} (d^2)^h \leq d ((n_1 \vee n_2)d)^{d-h-1} (d^2)^h = (n_1 \vee n_2)^{d-h-1} d^{d+h}.
	  \end{equation*}
\end{proof}

Finally, we bound the $\sum_{ \balpha \in \bbN^N, 0 \leq | \balpha | \leq D } \frac{\kappa_{\balpha}^2(x, X)}{ \balpha! }$ in the following Proposition \ref{prop: biclustering-bound-sum-of-alpha}.
\begin{Proposition}\label{prop: biclustering-bound-sum-of-alpha}
	Under the same setting as in Proposition \ref{prop: biclustering-alpha-property}, we have for any $D \geq 1$, we have
	\begin{equation*}
	\begin{split}
		 & \sum_{\balpha \in \bbN^N, 0\leq|\balpha| \leq D }  \frac{\kappa_{\balpha}^2(x,X)}{\balpha!} \\
		& \leq \frac{\lambda^2}{k_1^2} - \frac{\lambda^2}{(n_1 \vee n_2)} +\frac{\lambda^2}{(n_1 \vee n_2)} \sum_{h=0}^d \left( D^2 (D+1)^2 \lambda^2 \right)^h \sum_{d=h}^D \left( D (D+1)^2 \frac{(n_1 \vee n_2) \lambda^2}{k_1^2} \right)^{d-h}.
	\end{split}
	\end{equation*}
	
	In particular, for any $0 < r < 1$, if $\lambda^2 \leq \frac{r}{(D(D+1))^2} \min\left(1, \frac{k_1^2}{n_1 \vee n_2} \right)$, then we have 
	\begin{equation*}
		\sum_{\balpha \in \bbN^N, 0 \leq |\balpha| \leq D }  \frac{\kappa_{\balpha}^2(x,X)}{\balpha!} \leq \frac{\lambda^2}{k_1^2} + \frac{r(2-r)\lambda^2}{(1-r)^2(n_1 \vee n_2)}.
	\end{equation*}
\end{Proposition}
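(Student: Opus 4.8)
The plan is to mimic the proof of Proposition~\ref{prop: sum-kappa-bound-SBM} almost verbatim, replacing the role of $n$ by $n_1+n_2$ and the role of $k$ by $k_1 = k_1\wedge k_2$ (recall the WLOG assumption $k_1\le k_2$). First I would record $\kappa_0(x,X)=\bbE(x)=\lambda/k_1$ and bound $\sum_{\balpha}\kappa_{\balpha}^2(x,X)/\balpha!\le\sum_{\balpha}\kappa_{\balpha}^2(x,X)$. Then, by parts (i) and (ii) of Proposition~\ref{prop: biclustering-alpha-property}, every term vanishes except $\balpha=0$ and those $\balpha$ that are connected, have $|\balpha|\ge 1$, and satisfy $1\in V_1(\balpha)$ and $1\in V_2(\balpha)$; so the sum reduces to $\lambda^2/k_1^2$ plus a sum over such $\balpha$ with $1\le|\balpha|\le D$.

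Next I would group the surviving $\balpha$ by $d=|\balpha|$ and by $h$ defined through $|V(\balpha)|=d+1-h$, where $0\le h\le d-1$. Part (iii) of Proposition~\ref{prop: biclustering-alpha-property} gives $|\kappa_{\balpha}(x,X)|\le\lambda^{d+1}(1/k_1)^{d-h}(d+1)^{d}$, and Lemma~\ref{lm: num-nonzero-alpha-biclustering} bounds the number of such $\balpha$ by $(n_1+n_2)^{d-h-1}d^{d+h}$. Multiplying the squared coefficient bound by the count and summing over $d$ and $h$ gives
\[
\frac{\lambda^2}{k_1^2}+\frac{\lambda^2}{n_1+n_2}\sum_{d=1}^{D}\sum_{h=0}^{d-1}\Bigl(\frac{(n_1+n_2)\,d(d+1)^2\lambda^2}{k_1^2}\Bigr)^{d}\Bigl(\frac{d\,k_1^2}{n_1+n_2}\Bigr)^{h},
\]
and bounding $d\le D$, $(d+1)^2\le (D+1)^2$ inside each factor, extending the range to $0\le h\le d\le D$, and re-indexing via the identity $\bigl(\tfrac{(n_1+n_2)D(D+1)^2\lambda^2}{k_1^2}\bigr)\bigl(\tfrac{Dk_1^2}{n_1+n_2}\bigr)=D^2(D+1)^2\lambda^2$ yields the first displayed bound in the statement, exactly as in the SBM case.

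Finally, under the hypothesis $\lambda^2\le\frac{r}{(D(D+1))^2}\min\!\left(1,\frac{k_1^2}{n_1+n_2}\right)$, both $D(D+1)^2(n_1+n_2)\lambda^2/k_1^2\le r$ and $D^2(D+1)^2\lambda^2\le r$, so the double geometric sum is at most $\sum_{h\ge0}\sum_{m\ge0}r^{h}r^{m}=1/(1-r)^2$; hence the whole quantity is at most $\lambda^2/k_1^2+\frac{\lambda^2}{n_1+n_2}\bigl(\frac{1}{(1-r)^2}-1\bigr)=\lambda^2/k_1^2+\frac{r(2-r)\lambda^2}{(1-r)^2(n_1+n_2)}$, which is the second claim. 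I do not expect a genuine obstacle here beyond what was already handled for the SBM; the one point that needs checking is that the bipartite counting of Lemma~\ref{lm: num-nonzero-alpha-biclustering} produces the same exponents as Lemma~\ref{lm: num-nonzero-alpha} with $n$ replaced by $n_1+n_2$, so that the arithmetic transfers unchanged, and that the $k_1/k_2$ asymmetry is harmless thanks to the WLOG reduction already in force in Proposition~\ref{prop: biclustering-alpha-property}.
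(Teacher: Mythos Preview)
Your proposal is correct and follows essentially the same approach as the paper, which explicitly states that the proof is obtained from that of Proposition~\ref{prop: sum-kappa-bound-SBM} by the substitutions $k\mapsto k_1$ and $n\mapsto n_1+n_2$. The ingredients you invoke (Proposition~\ref{prop: biclustering-alpha-property} and Lemma~\ref{lm: num-nonzero-alpha-biclustering}) are precisely the bipartite analogues needed, and your arithmetic matches the SBM computation line by line.
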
 \begin{proof}
	The proof of this proposition is almost the same as the proof of Proposition \ref{prop: sum-kappa-bound-SBM} by replacing $k$ with $k_1$, $n$ with $(n_1 \vee n_2)$ and $\frac{(p-q)^2}{q(1-p)} $ with $\lambda^2$. We omit the proof here for simplicity.
\end{proof}

\subsection{Proof of Corollary \ref{coro: biclustering-estimation-final-lower-bound}  }
	Without loss of generality, we assume $k_1 \leq k_2$. Since $k_1 \geq 2$ and $n_1 \vee n_2 \geq k_1^2 \geq 2k_1$, by Theorem \ref{th: biclustering-comp-lower-bound} we have there exists a small enough $r $ and $c_r > 0$ such that when $\lambda^2 = \frac{r}{(D(D+1))^2} \min(1,\frac{k_1^2}{n_1 \vee n_2}) $, we have
	\begin{equation*}
	\begin{split}
		\inf_{\hM \in \bbR[Y]^{n_1 \times n_2}_{\leq D} } \sup_{M \in \cM_{k_1, k_2}} \bbE( \ell(M, \hM) ) &\geq \inf_{\hM \in \bbR[Y]^{n_1 \times n_2}_{\leq D} } \bbE_{Y, M \sim \bbP_{\BC (\lambda)}} \ell(\hM, M) \\
		 &\geq c_r \frac{\lambda^2}{k_1} = c \frac{r}{D^4} \left( \frac{k_1}{n_1 \vee n_2} \wedge \frac{1}{k_1} \right),
	\end{split}
	\end{equation*}where $c$ depends $r$ only.

\end{sloppypar}

\end{document}